\def\tbinom#1#2{{\left(\textstyle\genfrac{}{}{0pt}{}{#1}{#2}\right)}}
\renewcommand{\|}{\mathbin{\rm |}}
\renewcommand{\epsilon}{\varepsilon}
\renewcommand{\phi}{\varphi}
\renewcommand{\emptyset}{\varnothing}
\def\lm{\lambda}
\def\K{\mathbf K}
\def\Z{\mathbf Z}
\def\C{\mathbf C}
\def\O{\mathscr O}
\def\F{\mathcal F}
\def\L{\mathcal L}
\def\H{\mathcal H}
\def\N{{\rm Mat}}
\def\M{\mathcal M}
\def\R{\mathcal R}
\def\Hom{\mathop{\rm Hom}\nolimits}
\def\ch{\mathop{\rm ch}}
\def\ev{\mathop{\rm ev}\nolimits}
\def\r{\mathop{\rm r}\nolimits}
\def\h{\mathop{\rm h}\nolimits}
\def\Dist{\mathop{\rm Dist}\nolimits}
\def\UT{\mathop{\rm UT}\nolimits}
\def\chain{\mathop{\rm chain}}
\def\shape{\mathop{\rm shape}}
\newtheorem{theorem}{Theorem}
\newtheorem{proposition}[theorem]{Proposition}
\newtheorem{lemma}[theorem]{Lemma}
\newtheorem{corollary}[theorem]{Corollary}
\newtheorem{definition}[theorem]{Definition}
\newtheorem{conjecture}{Conjecture}
\renewcommand{\labelenumi}{{\rm \theenumi}}
\renewcommand{\theenumi}{{\rm(\roman{enumi})}}
\newenvironment{remark}{\refstepcounter{theorem}\par{\noindent\bf Remark \arabic{theorem}.}}{}
\def\sectsign{\mathhexbox278}
\renewcommand{\(}{\left(}
\renewcommand{\)}{\right)}
\renewcommand{\[}{\left[}
\renewcommand{\]}{\right]}
\def\<{\langle}
\def\>{\rangle}
\renewcommand{\le}{\leqslant}
\renewcommand{\ge}{\geqslant}
\def\={\equiv}
\def\U{\mathcal U}
\def\sgn{\mathop{\rm sgn}\nolimits}
\title[A local criterion for Weyl modules for groups of type A]{A local criterion for Weyl modules for groups of type A}
\author{Vladimir Shchigolev}
\address{
Department of Algebra\\
Faculty of Mathematics\\
Lomonosov Moscow State University\\
%Glavnoe Zdanie\\
Leninskiye Gory, Moscow\\
119899, RUSSIA}
\email{shchigolev\_vladimir@yahoo.com}
\subjclass{20G05}
\begin{document}

\maketitle

\begin{abstract} Let $G$ be a universal Chevalley group
over an algebraically closed field and
%$\U^-$ be the subalgebra of the hyperalgebra of $G$ generated
$\U^-$ be the subalgebra of $\Dist(G)$ generated
%by the elements $X_{\alpha,m}$, where $\alpha$ is a negative root.
%by all divided powers $X_{\alpha,m}$ for $\alpha$ a negative root.
by all divided powers $X_{\alpha,m}$ with $\alpha<0$.
We conjecture an algorithm to determine if
$Fe^+_\omega\ne0$, where $F\in\U^-$, $\omega$ is a dominant weight
and $e^+_\omega$ is a highest weight vector of the Weyl module $\Delta(\omega)$. % over $G$.
This algorithm does not use bases of $\Delta(\omega)$ and
is similar to the algorithm for irreducible modules
that involves stepwise raising the vector under investigation.
For an arbitrary $G$, this conjecture is proved in one direction
%and for $G$ of type $A_l$ in both.
and for $G$ of type A in both.
\end{abstract}

\section{Introduction}\label{Introduction}

%Let $G$ be a simply connected, simple algebraic group over
%an algebraically closed field $\K$ of characteristic $p>0$.
%We denote by $\Sigma$ the root system of $G$,
%by $\Sigma^+$ the set of positive roots and by $\Pi$
%the set of simple roots.
%For each $\alpha\in\Sigma$ and $t\in\K$, we denote
%by $x_\alpha(t)$ the corresponding root element of $G$.

Let $\Sigma$ be a root system and $\K$ be an algebraically closed field.
%of characteristic $p>0$.
Consider the semisimple complex
Lie algebra $\mathfrak L$ with Cartan subalgebra $\mathcal H$
and root system $\Sigma$. For each $\alpha\in\Sigma$,
we denote by $H_\alpha$ the element of $\H$ introduced
in~\cite[Lemma~1]{Steinberg_eng}.

We denote by $\Sigma^+$ a positive root system of $\Sigma$ and
by $\Pi=\{\alpha_1,\ldots,\alpha_\ell\}$ the simple root system
contained in $\Sigma^+$.
%We put $H_i:=H_{\alpha_i}$ for brevity.
We choose elements $X_\alpha$, where $\alpha\in\Sigma$, so that
they together with $H_{\alpha_1},\ldots,H_{\alpha_\ell}$ form
a Chevalley basis of $\mathfrak L$ in the sense of~\cite[Theorem~1]{Steinberg_eng}.
Let $G$ be the universal Chevalley group
over $\K$ constructed by this
%Chevalley
basis as in~\cite[\sectsign~3]{Steinberg_eng}.

%We denote by $\Sigma^+$ a positive root system of $\Sigma$ and
%by $\Pi$ the simple root system contained in $\Sigma^+$.
%For each $\alpha\in\Sigma$ and $t\in\K$, we denote
%by $x_\alpha(t)$ the corresponding root element of $G$.

Following~\cite[Theorem~2]{Steinberg_eng}, we denote by $\U_\Z$
the subring of the universal enveloping algebra of $\mathfrak L$
generated by $X_\alpha^m/m!$, where $\alpha\in\Sigma$ and
%$m\ge0$.
%$m\in\Z^{\ge0}$.
$m\in\Z^+$ (the set of nonnegative integers).
In this paper, we shall consider
modules over the algebra $\U:=\U_\Z\otimes_\Z\K$\label{hyperalgebra}, which is called
the {\it hyperalgebra} of $G$ (denoted by $\Dist(G)$ in~\cite{Jantzen2}). As a $\K$-algebra $\U$
is generated by the elements $X_{\alpha,m}:=(X_\alpha^m/m!)\otimes1_\K$.
Moreover, every rational $G$-module $V$ can be made into a $\U$-module
by the rule
$$
x_\alpha(t)v=\sum\nolimits_{m=0}^{+\infty}t^mX_{\alpha,m}v,
$$
where $v\in V$, $\alpha\in\Sigma$, $t\in\K$ and $x_\alpha(t)$ is the root
element of $G$ corresponding to $\alpha$ and $t$
(see~\cite[\sectsign~3]{Steinberg_eng}).
We shall also need the elements
$H_{\alpha,m}=\binom{H_\alpha}m\otimes1_\K$.
It is easy to show that these elements actually belong to $\U$
(e.g.~\cite[Corollary to Lemma~5]{Steinberg_eng}).
The reader should keep in mind the %simple
formula
$\binom{H_\alpha+r}m\otimes1_\K=\sum_{n=0}^mH_{\alpha,n}\binom r{m-n}$,
where $r\in\Z$.

%***

%Since we shall work mostly with $\U$, we denote
%the elements $X_\alpha\otimes1_\K$, $H_\alpha\otimes1_\K$ and
%$\binom{H_\alpha}m\otimes1_\K$ of $\U$ by
%the $X_\alpha$, $H_\alpha$ and $\binom{H_\alpha}m$ respectively.
%Note that $X_{\alpha,0}$ and $\binom{H_\alpha}0$ equal
%the identity element of $\U$.

\begin{proposition}\label{proposition:lcwman2:0}${}$
\!\!\!
\!\!
\!\!
\!\!
The products
$
\displaystyle
%\biggl(
\prod\nolimits_{\alpha\in\Sigma^+}\!X_{-\alpha,m_{-\alpha}}
%\biggr)
\cdot
%\biggl(
%\prod\nolimits_{i=1}^l\tbinom{H_i}{n_i}
\prod\nolimits_{i=1}^\ell H_{\alpha_i,n_i}
%\biggr)
\cdot
%\biggl(
\prod\nolimits_{\alpha\in\Sigma^+}\!X_{\alpha,m_\alpha},
%\biggr)
$\\[4pt]
where $m_{-\alpha}$, $n_i$, $m_\alpha\in\Z^+$,
taken in some fixed order
%are nonnegative integers
%and the products in the first and the last factors are taken in some
%fixed order,
form a basis of $\U$.
\end{proposition}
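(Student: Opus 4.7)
The plan is to recognize this as the standard Kostant-type integral PBW basis for the hyperalgebra and reduce it to the corresponding statement for $\U_\Z$, which is essentially \cite[Theorem~2]{Steinberg_eng} (the reference from which $\U_\Z$ is borrowed). Once the integral version is established, the $\K$-linear statement follows at once by base change $\otimes_\Z\K$, since a free $\Z$-basis remains a $\K$-basis after tensoring.

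For spanning, I would start from the fact that $\U_\Z$ is generated as a $\Z$-algebra by the divided powers $X_\alpha^m/m!$ for $\alpha\in\Sigma$ and $m\in\Z^+$, and that the binomial Cartan elements $\binom{H_{\alpha_i}}{n}$ already lie in $\U_\Z$ by the corollary cited in the excerpt. Any element of $\U_\Z$ is thus a $\Z$-linear combination of arbitrary-order products of such generators. To push all negative-root factors to the left and all positive-root factors to the right, one invokes Chevalley's commutator formula for divided powers $X_\alpha^m/m!$ and $X_\beta^n/n!$ with $\beta\ne\pm\alpha$, together with the identity
$$\frac{X_\alpha^m}{m!}\cdot\frac{X_{-\alpha}^n}{n!}=\sum_{k\ge0}\frac{X_{-\alpha}^{n-k}}{(n-k)!}\cdot\binom{H_\alpha-m-n+2k}{k}\cdot\frac{X_\alpha^{m-k}}{(m-k)!},$$
and the $\Z$-integral relations for moving $\binom{H_{\alpha_i}}{n}$ past $X_{\pm\alpha,m}$ (obtainable from the formula for $\binom{H_\alpha+r}{m}\otimes1_\K$ recalled at the end of the excerpt). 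Since all coefficients produced by these commutations are integers, the resulting normal-form monomials $\Z$-span $\U_\Z$; tensoring with $\K$ gives the spanning statement in $\U$.

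For linear independence, I would pass to the complex universal enveloping algebra $U(\mathfrak L)$. Each product
$$\prod\nolimits_{\alpha\in\Sigma^+}\frac{X_{-\alpha}^{m_{-\alpha}}}{m_{-\alpha}!}\cdot\prod\nolimits_{i=1}^\ell\binom{H_{\alpha_i}}{n_i}\cdot\prod\nolimits_{\alpha\in\Sigma^+}\frac{X_\alpha^{m_\alpha}}{m_\alpha!}$$
is a nonzero rational multiple of the classical PBW monomial in the $X_{\pm\alpha}$ of the same multi-exponent, multiplied by a polynomial in the $H_{\alpha_i}$ with distinct leading degrees $(n_1,\ldots,n_\ell)$. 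Separating by $X$-multi-exponent and then by $H$-degree, the classical PBW theorem over $\C$ forces these elements to be $\C$-linearly independent, hence $\Z$-linearly independent in $\U_\Z$. Therefore $\U_\Z$ is a free $\Z$-module with the asserted basis, and tensoring with $\K$ over $\Z$ yields the proposition.

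The main technical burden is the spanning step: verifying that every rearrangement of divided-power monomials has integer coefficients (not merely rational ones). This is exactly the content of Chevalley's divided-power commutator formulas and the Steinberg lemmas underlying \cite[Theorem~2]{Steinberg_eng}, so in a proof of the proposition one invokes them rather than re-deriving them; the linear-independence and base-change steps are then formal.
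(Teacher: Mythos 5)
The paper does not prove this proposition; it implicitly relies on Steinberg's Theorem 2 (cited when $\U_\Z$ is introduced), which asserts exactly that these divided-power/binomial-coefficient monomials form a $\Z$-basis of $\U_\Z$, and then on base change to $\K$. Your proof reconstructs that standard argument — commutation of divided powers to establish $\Z$-spanning, classical PBW over $\C$ for $\Z$-independence, then $\otimes_\Z\K$ — so it is correct and matches the approach the paper takes for granted.
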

We shall always mean the grading of $\U$ in which $X_{\alpha,m}$
has weight $m\alpha$ and $H_{\alpha,m}$ has weight $0$.
Elements of $\U$ that are homogeneous with respect to this grading will be called {\it weight elements}.

We denote the $\K$-span of all the above basis elements
\begin{itemize}
\item with unitary second and third factors by $\U^-$;
\item with unitary first and third factors by $\U^0$;
\item with unitary third factor by $\U^{-,0}$.
\end{itemize}
It can be easily checked that each of these spaces
is a $\K$-algebra.
A vector $v$ of a rational $G$-module
is called {\it primitive} if $X_{\alpha,m}v=0$
for any $\alpha\in\Sigma^+$ and $m>0$ and
is called {\it simply primitive} if $X_{\alpha,1}v=0$
for any $\alpha\in\Sigma^+$.
%%Это зачем надо?

To the Chevalley basis of $\mathfrak L$,
we associate the maximal torus $T$ in the standard way
(see~\cite[Lemma~21]{Steinberg_eng}, where the torus is denoted by $H$).
We denote by $X(T)$ and $X^+(T)$\label{XT} the set of weights and
the set of dominant (with respect to $\Sigma^+$)
weights of $T$ respectively. Let $\omega_1,\ldots,\omega_\ell$
be the fundamental
weights of $T$ corresponding to the roots $\alpha_1,\ldots,\alpha_\ell$.
Each weight $\omega\in X(T)$ has the form
$\omega=a_1\omega_1+\cdots+a_\ell\omega_\ell$, where $a_i\in\Z$.
Under this notation, we set $\h(\omega):=a_1+\cdots+a_\ell$\label{h}.
For each $\omega\in X^+(T)$, we denote by $\Delta(\omega)$
and $\nabla(\omega)$ the Weyl module and the co-Weyl module
with highest weight $\omega$ respectively. We also fix
some nonzero vector $e^+_\omega$ of $\Delta(\omega)$
having weight $\omega$. In what follows, we denote by
$X_{\alpha,m}^V$ the operator on a rational $G$-module $V$
acting as the left multiplication by $X_{\alpha,m}$.
We also denote by $V^\tau$ for $\tau\in X(T)$
the $\tau$-weight space of $V$. A {\it weigh decomposition} of an element $x$
belonging to $\U$ or to a rational $G$-module is
$x=x_1+\cdots+x_m$, where $x_1,\ldots,x_m$ are weight elements of
mutually distinct weights.

It is well known that a
vector of the irreducible rational
$G$-module $L(\omega)$\label{L(omega)} is nonzero if and only if it can be raised
up to proportionality to the highest weight vector $v^+_\omega$ of $L(\omega)$
by successive multiplications by the elements
$X_{\alpha,m}$ for $\alpha\in\Sigma^+$.
Thus for any $F\in\U^-$, we can determine whether $Fv^+_\omega\ne0$
by calculating directly in $\U$ (see Remark~\ref{remark:lcwman2:1}).
Note that we should at first consider the weight decomposition $F=F_1+\cdots+F_m$
and then investigate if $F_iv^+_\omega\ne0$ separately for each $i$.

Conjectures~\ref{conjecture:lcwman2:A} and~\ref{conjecture:lcwman2:B}
in Section~\ref{conjectures and reformulations} suggest
algorithms similar to this algorithm that concern
the Weyl module $\Delta(\omega)$ instead of
the irreducible module $L(\omega)$. These conjectures are proved
in one direction, see Corollary~\ref{corollary:lcwman:0}.
This result may be useful for establishing
%some
nonzero homomorphism
between Weyl modules. Simple examples are given
in Sections~\ref{Example for GB2K and root 2alpha+beta}
and~\ref{Example for GB2K and root alpha+beta}.
In these examples, we choose $Y$ to be the corresponding
elements described at page~241 of~\cite{Carter_Lusztig}
evaluated at $\omega$. The coefficients in the commutator
relations for $B_2$ can be found in~\cite[\sectsign 25 Exercise~6]{Humphreys}.

Conjectures~\ref{conjecture:lcwman2:A} and~\ref{conjecture:lcwman2:B}
turn out to hold for $G=A_\ell(\K)$. This is proved in
Section~\ref{Criterion for A_ell}. We use there the standard basis
theorem for Weyl modules, the straightening rule and the results of
Section~\ref{Flows} concerning some properties of graphs of special type
called flows.

Section~\ref{Appendix: List of Notations} contains the list of notations
globally used throughout this paper.

{\bf Acknowledgments.} The author would like to thank Irina Suprunenko
for drawing his attention to this problem and a useful discussion.

\section{Conjectures and reformulations}\label{conjectures and reformulations}

\subsection{Conjectures}
Given a weight $\omega\in X(T)$, we define the $\K$-linear
map $\ev^\omega:\U^{-,0}\to\U^-$ by
$$
\ev^\omega
\biggl(
\prod\nolimits_{\alpha\in\Sigma^+}X_{-\alpha,m_{-\alpha}}
\cdot\displaystyle
\prod\nolimits_{i=1}^\ell H_{\alpha_i,n_i}
\biggr)=
\prod\nolimits_{\alpha\in\Sigma^+}X_{-\alpha,m_{-\alpha}}
\cdot\displaystyle
\prod\nolimits_{i=1}^\ell\tbinom{\omega(H_{\alpha_i})}{n_i}.
$$

The map $\ev^\omega$ enjoys the properties
{
\leftmargini=38pt
\renewcommand{\labelenumi}{{\bf\theenumi}}
\renewcommand{\theenumi}{(ev-\arabic{enumi})}
\begin{enumerate}
\item\label{property:ev-1}
      the restriction $\ev^\omega|_{\U^0}$ is a $\K$-algebra homomorphism from $\U^0$ to $\K$;\\[-5pt]
\item\label{property:ev-4}
      if $v$ is a vector of weight $\omega$ of a rational $G$-module
      and $P\in\U^{-,0}$, then $Pv=\ev^\omega(P)v$.
\end{enumerate}}

Given a weight $\omega\in X(T)$, a root $\alpha\in\Sigma^+$
and an integer $m\in\Z^+$, we define the operator $\r_{\alpha,m}^{\,\omega}$\label{r}
on $\U^-$ as follows. Let $F$ belong to $\U^-$.
By Proposition~\ref{proposition:lcwman2:0}, we have the unique
representation $X_{\alpha,m}F=P+E$, where $P\in\U^{-,0}$ and
$E$ belongs to the left ideal of $\U$ generated by
the elements $X_{\beta,k}$ with $\beta\in\Sigma^+$ and $k>0$.
Then we set $\r_{\alpha,m}^{\,\omega}(F):=\ev^\omega(P)$.
The map $\r_{\alpha,m}^{\,\omega}$ enjoys the properties
{
\leftmargini=32pt
\renewcommand{\labelenumi}{{\bf\theenumi}}
\renewcommand{\theenumi}{(r\,-\arabic{enumi})}
\begin{enumerate}
\item\label{property:r-1} if $F$ has weight $\tau$ then
                          $\r_{\alpha,m}^{\,\omega}(F)$
                          has weight $\tau+m\alpha$;
\item\label{property:r-2}
      if $v$ is a primitive vector of weight $\omega$ of a rational $G$-module,
      $F\in\U^-$, $\alpha\in\Sigma^+$ and $m\in\Z^+$,
      then $X_{\alpha,m}Fv=\r_{\alpha,m}^{\,\omega}(F)v$;

\end{enumerate}}

{\it Example.} We have $X_{\alpha_i,1}X_{-\alpha_i,2}=X_{-\alpha_i,1}(H_{\alpha_i,1}-1)+X_{-\alpha_i,2}X_{\alpha_i,1}$.
Therefore, $\r_{\alpha_i,1}^{\,\omega}(X_{-\alpha_i,2})=\ev^\omega(X_{-\alpha_i,1}(H_{\alpha_i,1}-1))=(a_i-1)X_{-\alpha_i,1}$,
where $\omega=a_1\omega_1+\cdots+a_\ell\omega_\ell$.

To formulate our conjectures, we consider the following
transformations
on $\U^-\times X^+(T)$:

{
\renewcommand{\labelenumi}{{\rm \theenumi}}
\renewcommand{\theenumi}{{\rm(\alph{enumi})}}
\begin{enumerate}
\item\label{transformation:a}
      $(F,\omega)\mapsto(\r_{\alpha,m}^{\,\omega}(F),\omega)$, where $\alpha\in\Sigma^+$ and
      $m\in\Z^+$;
\item\label{transformation:b}
      $(F,\omega)\mapsto (F,\omega-\delta)$, where
      $\delta$ is a
      weight of $X^+(T)$
      such that $\omega-\delta\in X^+(T)$.
\end{enumerate}}

%\begin{definition}\label{definition:lcwman:0}
%Let $F\in\U^-$ and $\omega\in X^+(T)$. The pair $(F,\omega)$
%is called reducible {\rm(}simply reducible{\rm)} if $(F,\omega)$ can be
%reduced to a pair of the form $(c,0)$, where $c\in\K^*$, by a series of
%transformations~\ref{transformation:a}
%{\rm(}resp. transformations~\ref{transformation:a} with $m=1${\rm)}
%and~\ref{transformation:b}.
%\end{definition}
%In this definition, we also say that $(F,\omega)$ is
%{\it reducible {\rm(}simply reducible{\rm)} to} $(c,0)$
%if we need to specify the resulting pair.

%\begin{definition}\label{definition:lcwman:0}
%Let $F\in\U^-$ and $\omega\in X^+(T)$. The pair $(F,\omega)$
%is called reducible {\rm(}simply reducible{\rm)} to a pair $(c,0)$, where $c\in\K^*$,
%if there exists a sequence
%\begin{equation}\label{equation:lcwman:-4}
%(F^{(k)},\omega^{(k)}),\ldots,(F^{(0)},\omega^{(0)})
%\end{equation}
%of pairs of $\U^-\times X^+(T)$ such that $(F^{(k)},\omega^{(k)})=(F,\omega)$,
%$(F^{(0)},\omega^{(0)})=(c,0)$ and $(F^{(i)},\omega^{(i)})$
%is derived from $(F^{(i+1)},\omega^{(i+1)})$
%by transformation~\ref{transformation:a} {\rm(}resp. transformation~\ref{transformation:a} with $m=1${\rm)}
%or transformation~\ref{transformation:b} for any $i=0,{\ldots},k{-}1$.
%\end{definition}

\begin{definition}\label{definition:lcwman:0}
Let $F\in\U^-$ and $\omega\in X^+(T)$. The pair $(F,\omega)$
is called reducible {\rm(}simply reducible{\rm)} to a pair $(c,0)$, where $c\in\K^*$,
if there exists a sequence
\begin{equation}\label{equation:lcwman:-4}
(F^{(k)},\omega^{(k)}),\ldots,(F^{(0)},\omega^{(0)})
\end{equation}
of pairs of $\U^-\times X^+(T)$ such that
\begin{enumerate}
    \item\label{definition:lcwman:0:property:1} $(F^{(k)},\omega^{(k)})=(F,\omega)$ and $(F^{(0)},\omega^{(0)})=(c,0)$;
    \item\label{definition:lcwman:0:property:2} for any $i=0,{\ldots},k{-}1$, $(F^{(i)},\omega^{(i)})$ is derived from $(F^{(i+1)},\omega^{(i+1)})$
          by transformation~\ref{transformation:a} {\rm(}resp. transformation~\ref{transformation:a} with $m=1$ {\rm)} or transformation~\ref{transformation:b}.
\end{enumerate}

\end{definition}

%\begin{definition}\label{definition:lcwman:0}
%Let $F\in\U^-$ and $\omega\in X^+(T)$. The pair $(F,\omega)$
%is called reducible {\rm(}simply reducible{\rm)} if $(F,\omega)$ can be
%reduced to a pair of the form $(c,0)$, where $c\in\K^*$, by a series of
%transformations~\ref{transformation:a}
%{\rm(}resp. transformations~\ref{transformation:a} with $m=1${\rm)}
%and~\ref{transformation:b}.
%\end{definition}
%In this definition, we also say that $(F,\omega)$ is
%{\it reducible {\rm(}simply reducible{\rm)} to} $(c,0)$
%if we need to specify the resulting pair.

\medskip

\begin{remark}\label{remark:lcwman2:0}
If $(F,\omega)$ is simply reducible to $(c,0)$, then $(F,\omega)$ is reducible to $(c,0)$.
\end{remark}

We shall also say that a pair $(F,\omega)$ is {\it reducible} ({\it simply reducible}) if this pair is
reducible (simply reducible) to some pair $(c,0)$, where $c\in\K^*$.

\begin{conjecture}\label{conjecture:lcwman2:A}
Let $F$ be a weight element of $\U^-$ and $\omega\in X^+(T)$.
We have $Fe^+_\omega\ne0$ if and only if the pair $(F,\omega)$ is reducible.
\end{conjecture}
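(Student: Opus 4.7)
\emph{Plan for the proof of Conjecture~\ref{conjecture:lcwman2:A}.} The conjecture has two directions. The \emph{sufficiency} direction (reducibility implies $Fe^+_\omega\neq 0$) I expect to go through for arbitrary root systems via induction on the length $k$ of the reducing sequence~(\ref{equation:lcwman:-4}). The base case $k=0$ is immediate: $(F,\omega)=(c,0)$ with $c\in\K^*$ gives $Fe^+_\omega=c\cdot e^+_0\neq 0$. For the inductive step, consider the first transformation $(F,\omega)\to(F',\omega')$. If it is transformation~\ref{transformation:a}, then $F'=\r^{\,\omega}_{\alpha,m}(F)$, and property~\ref{property:r-2} gives $X_{\alpha,m}Fe^+_\omega=F'e^+_\omega$, which is nonzero by induction; hence $Fe^+_\omega\neq 0$. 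If it is transformation~\ref{transformation:b}, then $\omega'=\omega-\delta$ with $\delta,\omega-\delta\in X^+(T)$, and it suffices to establish the implication $Fe^+_{\omega-\delta}\neq 0\Rightarrow Fe^+_\omega\neq 0$. This follows from the standard description of the left annihilator $I(\mu):=\{F\in\U^-\mid Fe^+_\mu=0\}$ as the left ideal of $\U^-$ generated by $\{X_{-\alpha_i,m}\mid m>\mu(H_{\alpha_i}),\ 1\leqslant i\leqslant\ell\}$: since $(\omega-\delta)(H_{\alpha_i})\leqslant\omega(H_{\alpha_i})$ when $\delta$ is dominant, $I(\omega)\subseteq I(\omega-\delta)$, and the contrapositive yields the required implication.

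The \emph{necessity} direction is where the real work lies. Given $F$ with $Fe^+_\omega\neq 0$, the plan is to construct a reducing sequence explicitly. The naive scheme would be to use transformation~\ref{transformation:a} repeatedly to reduce $Fe^+_\omega$ to a scalar multiple $c\cdot e^+_\omega$, arriving at $(c,\omega)$, and then to apply transformation~\ref{transformation:b} with $\delta=\omega$ to reach $(c,0)$. The essential difficulty is that $\Delta(\omega)$, unlike the irreducible $L(\omega)$, may contain primitive vectors of weight strictly below $\omega$ that obstruct raising $Fe^+_\omega$ all the way to $e^+_\omega$. It is precisely to dismantle such obstructions that transformation~\ref{transformation:b} must be interleaved with transformation~\ref{transformation:a}: passing from $\Delta(\omega)$ to $\Delta(\omega-\delta)$ enlarges the annihilator ideal of the highest weight vector and can therefore kill the obstructing primitive vector while still certifying nonvanishing further down.

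For $G$ of type A, I would implement this interleaving using three tools: the standard basis theorem for $\Delta(\omega)$ (bases indexed by semistandard Young tableaux), the straightening rule for manipulating products of divided powers, and the graph-theoretic flows developed in Section~\ref{Flows}. Expand $Fe^+_\omega$ in the standard basis and select a tableau with nonzero coefficient; the associated flow should provide a combinatorial certificate of nonvanishing that either decreases strictly under a well-chosen transformation~\ref{transformation:a} or, when no such choice exists, survives and is simplified by a well-chosen transformation~\ref{transformation:b}. The main obstacle is to prove that this alternation can always be executed and terminates in finitely many steps at a pair $(c,0)$: one must show that whenever transformation~\ref{transformation:a} alone cannot make progress, there is a dominant $\delta$ with $\omega-\delta\in X^+(T)$ such that transformation~\ref{transformation:b} strictly simplifies the flow while preserving its nonvanishing, and that the resulting descent is well-founded. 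Establishing the requisite behavior of flows under both operations is the type-A-specific heart of the argument, and I expect it to absorb most of the subsequent technical work.
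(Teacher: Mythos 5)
Your sufficiency argument (reducibility implies $Fe^+_\omega\neq 0$) has the same shape as the paper's Corollary~\ref{corollary:lcwman:0}: induct on the length of the reducing sequence, with transformation~\ref{transformation:a} handled by~\ref{property:r-2}. The substantive divergence is in transformation~\ref{transformation:b}. The paper establishes the required implication $Fe^+_{\omega-\delta}\neq 0\Rightarrow Fe^+_\omega\neq 0$ by proving the existence of a $\U^-$-homomorphism $d^{\,\omega}_\delta:\Delta(\omega)\to\Delta(\omega-\delta)$ sending $e^+_\omega\mapsto e^+_{\omega-\delta}$ (Lemma~\ref{lemma:U^-}), and that proof is not short: it uses Mathieu's theorem on good filtrations of tensor products of $\nabla$'s, contravariant duality, and a weight argument. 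You instead invoke, as ``standard,'' a presentation of the annihilator $I(\mu)=\{F\in\U^-\mid Fe^+_\mu=0\}$ as the left ideal generated by the divided powers $X_{-\alpha_i,m}$, $m>\mu(H_{\alpha_i})$. Note that this claim is \emph{strictly stronger} than Lemma~\ref{lemma:U^-}: since $\Delta(\omega)\cong\U^-/I(\omega)$ as a $\U^-$-module, the existence of $d^{\,\omega}_\delta$ is precisely the statement $I(\omega)\subseteq I(\omega-\delta)$, while you are asserting an explicit finite generating set for each $I(\mu)$. This presentation is a genuine theorem about the hyperalgebra (it is an integrality statement over $\Z$ followed by base change, and does not reduce to the characteristic-zero Verma-module computation), and the paper pointedly does \emph{not} rely on it. If you intend to use it you need a proof or a precise citation; as written, this step quietly imports a result of comparable depth to the one it is meant to replace, so you have not actually found a shortcut.

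For the necessity direction your paragraph is a plan, not a proof, and as a plan it misses the decisive structural move of the paper. The author does not run an alternating sequence of raises and descents on $Fe^+_\omega$; instead, Lemma~\ref{lemma:reformulation:B} reformulates the whole problem: Conjecture~\ref{conjecture:lcwman2:B} (hence~\ref{conjecture:lcwman2:A} via Corollary~\ref{corollary:lcwman:0} and Remark~\ref{remark:lcwman2:0}) is equivalent to the assertion that for every nonzero dominant $\omega$ the intersection $\bigcap_i\ker d^{\,\omega}_{\omega_i}$ contains no nonzero simply primitive vector. This single clean statement is then killed in type A by producing, for any nonzero $v\in\Delta(\omega)$, one operator $z^\omega_i(a_1,\ldots,a_q;b_1,\ldots,b_q)$ --- a right-normed commutator of simple raising operators and one $D^{\,\omega}_{\omega_i}$ --- with $z^\omega_i(\ldots)(v)\neq 0$ (Lemma~\ref{lemma:lcwman2:8}); that nonvanishing is where the standard basis, the straightening rule and the flow combinatorics actually enter, through Lemma~\ref{lemma:lcwman:4}, Corollary~\ref{corollary:lcwman:5}, and the order on flows. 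Your ``interleave (a) and (b) until the flow simplifies, then show the descent terminates'' picture is heuristically in the right spirit but is not the mechanism used; proving termination of such an alternating scheme directly, without first passing to the kernel-intersection reformulation, would be considerably harder. You also do not note that the paper proves the stronger Conjecture~\ref{conjecture:lcwman2:B} and derives~\ref{conjecture:lcwman2:A} from it, which is why the hard direction can restrict to $m=1$ throughout.
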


\begin{conjecture}\label{conjecture:lcwman2:B}
Let $F$ be a weight element of $\U^-$ and $\omega\in X^+(T)$. We have $Fe^+_\omega\ne0$
if and only if the pair $(F,\omega)$
is simply reducible.
\end{conjecture}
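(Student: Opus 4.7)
The proof of Conjecture \ref{conjecture:lcwman2:B} naturally splits into two implications, which I would attack by rather different means.

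For the forward direction (simply reducible $\Rightarrow$ $Fe^+_\omega \ne 0$), the simplest route is to invoke Remark \ref{remark:lcwman2:0}---every simply reducible pair is reducible---and then apply the direction of Conjecture \ref{conjecture:lcwman2:A} that is established in Corollary \ref{corollary:lcwman:0} for arbitrary $G$. If one wishes instead to prove the ``reducible $\Rightarrow$ nonzero'' statement from scratch, the natural approach is induction along the sequence \eqref{equation:lcwman:-4} starting from the trivial base $(F^{(0)},\omega^{(0)})=(c,0)$ with $ce^+_0\ne 0$. The transformation \ref{transformation:a} step (for any $m$) lifts nonvanishing immediately via property \ref{property:r-2}, which gives $F^{(i)}e^+_{\omega^{(i)}}=X_{\alpha,m}F^{(i+1)}e^+_{\omega^{(i+1)}}$. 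The transformation \ref{transformation:b} step reduces to the lemma: for $\omega,\delta\in X^+(T)$ with $\omega-\delta\in X^+(T)$ and $Fe^+_{\omega-\delta}\ne 0$, one has $Fe^+_\omega\ne 0$. I would prove this via the canonical $\U$-module surjection $\pi\colon \Delta(\omega-\delta)\otimes\Delta(\delta)\to\Delta(\omega)$ sending $e^+_{\omega-\delta}\otimes e^+_\delta$ to $e^+_\omega$: expand $F(e^+_{\omega-\delta}\otimes e^+_\delta)$ by the coproduct on $\U$, isolate the leading summand $Fe^+_{\omega-\delta}\otimes e^+_\delta$ by weight considerations in the second tensor factor, and use either a good-filtration argument or the contravariant form on $\Delta(\omega)$ to see that $\pi$ does not annihilate the result.

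For the reverse direction ($Fe^+_\omega\ne 0\Rightarrow (F,\omega)$ is simply reducible), I specialize to $G$ of type A and construct the reduction sequence explicitly, using the standard basis theorem for $\Delta(\omega)$ (which furnishes a basis by semistandard tableaux of shape $\omega$), the straightening rule (which rewrites arbitrary tableau expressions in standard form), and the flow combinatorics of Section \ref{Flows}. Given $F$ of weight $-\eta$ with $Fe^+_\omega\ne 0$, the expansion of $Fe^+_\omega$ in the standard basis is nonzero. I would induct on a suitable statistic---for instance, lexicographically on the pair $(\h(\omega),n)$, where $n$ is the height of $\eta$---and use a flow associated with a nonzero standard summand of $Fe^+_\omega$ to either (i) locate a simple raising operator $X_{\alpha,1}$ for which $\r_{\alpha,1}^{\,\omega}(F)$ is again nonvanishing on $e^+_\omega$ and strictly smaller in height, or (ii) identify a dominant $\delta$ such that transformation \ref{transformation:b} leaves the nonvanishing intact on $e^+_{\omega-\delta}$ and strictly decreases the statistic. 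Iterating must terminate at some $(c,0)$ with $c\in\K^*$.

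The principal obstacle is the reverse direction. The subtle point is that most choices of $X_{\alpha,1}$ annihilate $Fe^+_\omega$ outright and therefore cannot figure in the reduction, while the transformation \ref{transformation:b} steps must interact correctly with the straightening rule. Encoding these constraints in a flow, and then verifying that a flow attached to a nonzero standard expansion always supplies a valid next step, is the substantial combinatorial task. The restriction to $m=1$ (which distinguishes Conjecture \ref{conjecture:lcwman2:B} from Conjecture \ref{conjecture:lcwman2:A}) should correspond to a specific edge-by-edge traversal of the flow, so that a single flow simultaneously witnesses both conjectures. The type A hypothesis enters precisely here, since the standard basis, the straightening rule, and the flow formalism are all intrinsically tied to semistandard tableau combinatorics.
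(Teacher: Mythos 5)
Your forward direction matches the paper exactly: it invokes Remark~\ref{remark:lcwman2:0} to pass to ordinary reducibility and then cites Corollary~\ref{corollary:lcwman:0}, whose own proof is the induction along the sequence~(\ref{equation:lcwman:-4}) that you outline (with the key Lemma~\ref{lemma:U^-} furnishing the map $d^{\,\omega}_\delta$; your sketch of that lemma via the surjection from a tensor product and a good-filtration/contravariant-form argument is close in spirit to the paper's good-filtration proof).

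For the reverse direction, however, you have the right scaffolding but miss the central mechanism, and there is a genuine gap at the step you yourself flag as the ``substantial combinatorial task.'' You propose to locate, at each stage, a \emph{single} raising operator $X_{\alpha,1}$ with $\r_{\alpha,1}^{\,\omega}(F)e^+_\omega\ne0$, or a \emph{single} descent $d^{\,\omega}_{\omega_j}$ preserving non-vanishing, chosen by inspecting a flow attached to the standard expansion. But you give no indication of how to make that choice work, and indeed finding one good raising operator directly is hard because the flow expansion of $\xi_i$ involves signed cancellations. The paper sidesteps this completely via Lemma~\ref{lemma:reformulation:B}: Conjecture~B is equivalent to the statement that the intersection~(\ref{equation:lcwman:-1}) of $\ker d^{\,\omega}_{\omega_j}$ contains no nonzero simply primitive vector. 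That reformulation reduces the ``always some next step exists'' claim to a single non-vanishing assertion, which is then proved \emph{indirectly}. Lemma~\ref{lemma:lcwman:0} shows that the commutator $\bigl[X^V_{\alpha_{i^{(q)}_1}},\ldots,X^V_{\alpha_i},D^{\,\omega}_{\omega_i}\bigr]$, restricted to $\Delta(\omega)$, acts on $Fe^+_\omega$ as $\xi_i(a_1,\ldots,a_q;b_1,\ldots,b_q)(F)\,e^+_{\omega-\omega_i}$, and Lemma~\ref{lemma:lcwman2:8} (using the standard basis, straightening rule, the minimal-tableau argument in the order of Section~\ref{Comparison of tableaux}, and the minimal flow of Section~\ref{Comparison of flows}) shows one can always choose the parameters so this is nonzero on a given nonzero $v$. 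Then Theorem~\ref{theorem:lcwman2:2} finishes by contradiction: expanding the commutator, every monomial ends either with some $X_{\alpha,1}$ (which kills a simply primitive $v$) or with $D^{\,\omega}_{\omega_i}$ (which kills any $v$ in the intersection~(\ref{equation:lcwman:-1})), so no such $v$ can exist. It is this contradiction argument applied to a \emph{compound} operator, rather than the selection of a single valid raising or descent step, that carries the proof; your proposal as written lacks it.
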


\begin{remark}\label{remark:lcwman2:1}
Obviously, for a weight element $F$ of $\U^-$,
$Fv^+_\omega\ne0$ if and only if $F$ can be reduced to $c\in\K^*$
by the transformations $F\mapsto\r_{\alpha,m}^{\,\omega}(F)$.
\end{remark}

{\it Example.} The Weyl module $\Delta(0)$ is one-dimensional and therefore is irreducible.
Take $F=1+X_{-\alpha}$, where $\alpha\in\Sigma^+$. Then $Fe^+_0=e^+_0\ne0$, while
the pair $(F,0)$ is not reducible.

The above example shows why we consider only weight elements in
Conjectures~\ref{conjecture:lcwman2:A} and~\ref{conjecture:lcwman2:B}.
However, each of these conjectures if proved would allow us to answer whether
$Fe^+_\omega\ne0$ for an arbitrary $F\in\U^-$. Indeed, consider the weight
decomposition $F=F_1+\cdots+F_m$. Then $Fe^+_\omega\ne0$ if and only if
$F_ie^+_\omega\ne0$ for some $i$.

\subsection{Proof in one direction}
Of principal importance for the theory developed in this paper
is the following result.

\begin{lemma}\label{lemma:U^-}
Let $\omega,\delta\in X^+(T)$
such that $\omega-\delta\in X^+(T)$. Then there
exists the $\U^-$-homomorphism
$d^{\,\omega}_\delta:\Delta(\omega)\to\Delta(\omega-\delta)$
that takes $e^+_\omega$ to $e^+_{\omega-\delta}$.
\end{lemma}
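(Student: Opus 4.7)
The plan is to define $d^{\,\omega}_\delta$ as the composition $q\circ\bar\phi$, where $\bar\phi\colon\Delta(\omega)\to\Delta(\omega-\delta)\otimes_\K\Delta(\delta)$ is a $\U$-module homomorphism sending $e^+_\omega$ to $e^+_{\omega-\delta}\otimes e^+_\delta$, and $q\colon\Delta(\omega-\delta)\otimes_\K\Delta(\delta)\to\Delta(\omega-\delta)$ is the $\K$-linear map $\mathrm{id}\otimes\epsilon_\delta$, with $\epsilon_\delta\colon\Delta(\delta)\to\K$ the projection onto the one-dimensional $\delta$-weight space sending $e^+_\delta\mapsto 1$. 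The map $q$ by itself is not $\U^-$-linear, but the composition $q\circ\bar\phi$ will be.

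First I would construct $\bar\phi$ by reduction to characteristic zero. The tensor $u^+_{\omega-\delta}\otimes u^+_\delta$ of highest weight vectors in the complex module $V_\C(\omega-\delta)\otimes_\C V_\C(\delta)$ is primitive of weight $\omega$, so the irreducibility of $V_\C(\omega)$ produces a unique $\U_\C$-homomorphism $\phi\colon V_\C(\omega)\to V_\C(\omega-\delta)\otimes_\C V_\C(\delta)$ sending a generator of $V_\C(\omega)$ to this tensor. Because the coproduct of $\U_\C$ restricts to a map $\U_\Z\to\U_\Z\otimes_\Z\U_\Z$ (by the standard divided-power formula), the Kostant lattice $V_\Z(\omega-\delta)\otimes_\Z V_\Z(\delta)$ is $\U_\Z$-stable, and $\phi$ restricts to the lattices. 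Extending scalars by $\otimes_\Z\K$ then yields $\bar\phi$.

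The $\U^-$-linearity of $d^{\,\omega}_\delta$ reduces to a formal coproduct computation. For $F\in\U^-$, write the coproduct as $\sum F_{(1)}\otimes F_{(2)}$ in Sweedler notation; then $F_{(1)},F_{(2)}\in\U^-$, and $F_{(2)}e^+_\delta$ contributes to the $\delta$-weight space of $\Delta(\delta)$ only through the scalar part of $F_{(2)}$, which is $\epsilon(F_{(2)})$, the counit. Hence
$$
d^{\,\omega}_\delta(Fe^+_\omega)\;=\;q\!\left(\sum F_{(1)}e^+_{\omega-\delta}\otimes F_{(2)}e^+_\delta\right)\;=\;\left(\sum F_{(1)}\epsilon(F_{(2)})\right)e^+_{\omega-\delta}\;=\;Fe^+_{\omega-\delta}
$$
by the counit axiom. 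Since $\Delta(\omega)=\U^-e^+_\omega$, this identity simultaneously shows $d^{\,\omega}_\delta(e^+_\omega)=e^+_{\omega-\delta}$ and $\U^-$-linearity: for $v=Fe^+_\omega$ and $G\in\U^-$, both $d^{\,\omega}_\delta(Gv)$ and $G\,d^{\,\omega}_\delta(v)$ evaluate to $(GF)e^+_{\omega-\delta}$. The principal obstacle is the characteristic-zero construction of $\bar\phi$, which rests on the standard but nontrivial fact that $\U_\Z$ is a Hopf $\Z$-subalgebra of $\U_\C$; once $\bar\phi$ is in hand, the remaining argument is purely formal Hopf-algebra bookkeeping.
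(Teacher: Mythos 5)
Your construction is correct, and it takes a genuinely different route from the paper's. You obtain the map as $q\circ\bar\phi$, where $\bar\phi\colon\Delta(\omega)\to\Delta(\omega-\delta)\otimes\Delta(\delta)$ comes from restricting the characteristic-zero embedding $V_\C(\omega)\hookrightarrow V_\C(\omega-\delta)\otimes V_\C(\delta)$ to the Kostant lattices (using that the coproduct preserves $\U_\Z$) and base-changing to $\K$; $\U^-$-linearity then drops out of the Sweedler counit identity $\sum F_{(1)}\epsilon(F_{(2)})=F$, since for $F\in\U^-$ one has $\Delta(F)\in\U^-\otimes\U^-$ and only the weight-zero (scalar) part of $F_{(2)}$ can survive the projection $\epsilon_\delta$. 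The paper instead works contravariantly dually: it establishes the character identity $\ch\nabla(\omega)\otimes\nabla(-w_0\delta)=\ch\nabla(\omega-\delta)+\sum_{\tau>\omega-\delta}a_\tau\ch\nabla(\tau)$ in characteristic zero, invokes Mathieu's theorem that $\nabla(\omega)\otimes\nabla(-w_0\delta)$ has a good filtration, and uses Jantzen's remark II.4.16(4) to place $\nabla(\omega-\delta)$ at the bottom of some such filtration; dualizing gives a $G$-module epimorphism $\pi\colon\Delta(\omega)\otimes\Delta(-w_0\delta)\to\Delta(\omega-\delta)$, and $d^{\,\omega}_\delta(v)=c^{-1}\pi(v\otimes u)$ with $u$ a lowest-weight vector. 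Your argument is more elementary in that it bypasses good-filtration theory entirely, but it leans on the identification of $\Delta(\omega)$ with the base change $V_\Z(\omega)\otimes_\Z\K$ of the minimal admissible lattice and on the integrality of the Hopf structure, whereas the paper works directly with Jantzen's module-theoretic definitions and gets the stronger statement that $\Delta(\omega-\delta)$ is a top $G$-module quotient of $\Delta(\omega)\otimes\Delta(-w_0\delta)$. One small imprecision worth fixing: what produces $\phi$ in characteristic zero is not irreducibility of $V_\C(\omega)$ by itself, but the standard fact that a primitive vector of weight $\omega$ in a finite-dimensional module over $\mathfrak L$ generates a submodule isomorphic to $V_\C(\omega)$ (semisimplicity plus highest-weight theory); as phrased, the inference is slightly loose even though the conclusion is right.
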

\begin{proof} Let $w_0$ be the longest element of the Weyl group
of $\Sigma$. Suppose temporarily that ${\rm char}\,\K=0$.
Then $L(\delta)^*\cong L(-w_0\delta)$
by~\cite[Corollary~II.2.5 and Proposition~II.2.6]{Jantzen2}.
Since $v^+_{\omega-\delta}\otimes v^+_\delta$ is a nonzero primitive
vector of $L(\omega-\delta)\otimes L(\delta)$ having weigh $\omega$,
the universal property of Weyl modules~\cite[Lemma~II.2.13 b]{Jantzen2}
implies the existence of a nonzero homomorphism from $\Delta(\omega)$
to $L(\omega-\delta)\otimes L(\delta)$. Using $\Delta(\omega)\cong L(\omega)$
due to ${\rm char}\,\K=0$ and~\cite[Lemma~I.4.4]{Jantzen2}, we obtain
$$
\begin{array}{l}
K\cong\Hom_G(L(\omega),L(\omega-\delta)\otimes L(\delta))\cong
    \Hom_G(L(\omega)\otimes L(\delta)^*,L(\omega-\delta))\\[6pt]
    \cong\Hom_G(L(\omega)\otimes L(-w_0\delta),L(\omega-\delta)).
\end{array}
$$
Hence $L(\omega-\delta)$ is a composition factor of
$L(\omega){\otimes}L(-w_0\delta)$ with multiplicity~$1$.
Here we used that any rational $G$-module is semisimple.
Let $L(\tau)$ be a composition factor of
$L(\omega)\otimes L(-w_0\delta)$ with $\tau\ne\omega-\delta$.
Again using the above mentioned fact on semisimplicity, we get
$$
\begin{array}{l}
0\ne\Hom_G(L(\omega)\otimes L(-w_0\delta),L(\tau))\cong
    \Hom_G(L(\omega),L(\tau)\otimes L(-w_0\delta)^*)\\[6pt]
    =\Hom_G(L(\omega),L(\tau)\otimes L(\delta)),
\end{array}
$$
whence $\omega\le\tau+\delta$ and $\omega-\delta<\tau$.
In terms of characters, these facts can be written as
\begin{equation}\label{equation:lcwman:-3}
\ch L(\omega)\otimes L(-w_0\delta)=\ch L(\omega-\delta)+
\sum\nolimits_{\tau>\omega-\delta}a_\tau\ch L(\tau)
\end{equation}
for some $a_\tau\in\Z^+$.

Now let us return to the situation where $\K$ has
arbitrary characteristic. Then~(\ref{equation:lcwman:-3})
can be rewritten as
\begin{equation}\label{equation:lcwman:-2}
\ch\nabla(\omega)\otimes\nabla(-w_0\delta)=\ch \nabla(\omega-\delta)+
\sum\nolimits_{\tau>\omega-\delta}a_\tau\ch \nabla(\tau).
\end{equation}
The main result of~\cite{Mathieu2} implies that
$\nabla(\omega)\otimes \nabla(-w_0\delta)$ has a good filtration.
The factors of this filtration are given by~(\ref{equation:lcwman:-2}).
Therefore applying~\cite[II.4.16~Remark 4]{Jantzen2}, we obtain
that $\nabla(\omega-\delta)$ is the bottom factor
in some good filtration of $\nabla(\omega)\otimes\nabla(-w_0\delta)$.
The module $\Delta(\omega)\otimes\Delta(-w_0\delta)$
is contravariantly dual to $\nabla(\omega)\otimes\nabla(-w_0\delta)$,
whence it has $\Delta(\omega-\delta)$ as a top factor.
Let $\pi:\Delta(\omega)\otimes\Delta(-w_0\delta)\to\Delta(\omega-\delta)$
be the corresponding epimorphism of $G$-modules.

Let $u$ be a nonzero vector of $\Delta(-w_0\delta)$ of minimal
weight $-\delta$. We are going to prove that
$e^+_\omega\otimes u\notin\ker\pi$.
Suppose on the contrary that $e^+_\omega\otimes u\in\ker\pi$. The comparison
of weights shows that
$e^+_\omega\otimes(\bigoplus_{\tau>-\delta}\Delta(-w_0\delta)^\tau)\subset\ker\pi$.
Since $\dim\Delta(-w_0\delta)^{-\delta}=\dim\Delta(-w_0\delta)^{-w_0\delta}=1$,
we obtain $e^+_\omega{\otimes}\Delta(-w_0\delta)\subset\ker\pi$.

It is a well-known fact that $e^+_\omega\otimes\Delta(-w_0\delta)$
generates $\Delta(\omega)\otimes\Delta(-w_0\delta)$ as
a $G$-module. We sketch the proof for completeness.
Let $W$ denote the $G$-submodule of
$\Delta(\omega)\otimes\Delta(-w_0\delta)$ generated by
$e^+_\omega\otimes\Delta(-w_0\delta)$.
Suppose that $W\ne\Delta(\omega)\otimes\Delta(-w_0\delta)$
and take $\sigma_{\rm max}$ to be a maximal weight such that
$\Delta(\omega)^{\sigma_{\rm max}}\otimes\Delta(-w_0\delta)$
is not contained in $W$. Let $v_1\in\Delta(\omega)^{\sigma_{\rm max}}$
and $v_2\in\Delta(-w_0\delta)$ be arbitrary vectors.
We have $v_1=Fe^+_\omega$ for some $F\in\U^-$
having weight $\sigma_{\rm max}-\omega$. Obviously,
$W\ni F(e^+_\omega\otimes v_2)=v_1{\otimes}v_2+w$,
where $w\in\bigoplus_{\sigma>\sigma_{\rm max}}\!\Delta(\omega)^\sigma\otimes\Delta(-w_0\delta)$.
The latter sum is contained in $W$ by the choice of $\sigma_{\rm max}$,
whence $w\in W$ and $v_1\otimes v_2\in W$. We proved
$\Delta(\omega)^{\sigma_{\rm max}}\otimes\Delta(-w_0\delta)\subset W$,
which is a contradiction.

Hence $\ker\pi=\Delta(\gamma)\otimes\Delta(-w_0\delta)$,
since $\ker\pi$ is a $G$-submodule. This contradicts $\pi\ne0$.
We proved that $e^+_\omega\otimes u\notin\ker\pi$.
Hence $\pi(e^+_\omega\otimes u)=ce^+_{\omega-\delta}$, where $c\in\K^*$.
Now the required map is given by $d^{\,\omega}_\delta(v)=c^{-1}\pi(v\otimes u)$,
where $v\in\Delta(\omega)$.
\end{proof}

We note the following trivial property of these homomorphisms:
{
\leftmargini=32pt
\renewcommand{\labelenumi}{{\bf \theenumi}}
\renewcommand{\theenumi}{(d-\arabic{enumi})}
\begin{enumerate}
\item\label{property:d-1}
      if $\omega,\tau,\omega-\delta,\delta-\tau\in X^+(T)$
      then $d^{\,\omega}_\delta=d^{\,\omega-\tau}_{\delta-\tau}d^{\,\omega}_\tau$.
\end{enumerate}}

\begin{lemma}\label{lemma:lcwman:-1}
$\ker d^{\,\omega}_\delta$ is a $T$-module.
\end{lemma}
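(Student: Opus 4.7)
The plan is to show that $d^{\,\omega}_\delta$ is weight-homogeneous of degree $-\delta$, that is, it carries $\Delta(\omega)^\tau$ into $\Delta(\omega-\delta)^{\tau-\delta}$ for every $\tau\in X(T)$. Once this is established, the kernel automatically decomposes as $\ker d^{\,\omega}_\delta=\bigoplus_{\tau\in X(T)}\bigl(\ker d^{\,\omega}_\delta\cap\Delta(\omega)^\tau\bigr)$, since the kernel of a linear map that respects a weight grading is the direct sum of its intersections with the graded pieces. A subspace of a rational $T$-module which is the sum of its weight components is $T$-stable, and this yields the claim.

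For the weight-homogeneity, I would unwind the construction of $d^{\,\omega}_\delta$ in the preceding lemma: $d^{\,\omega}_\delta(v)=c^{-1}\pi(v\otimes u)$, where $u\in\Delta(-w_0\delta)$ is a vector of weight $-\delta$ and $\pi:\Delta(\omega)\otimes\Delta(-w_0\delta)\to\Delta(\omega-\delta)$ is a $G$-module epimorphism. If $v\in\Delta(\omega)^\tau$, then by the standard rule for weights in a tensor product, $v\otimes u$ lies in $\bigl(\Delta(\omega)\otimes\Delta(-w_0\delta)\bigr)^{\tau-\delta}$. Since $\pi$ is a morphism of $G$-modules, it is in particular $T$-equivariant and hence preserves weight spaces, so $\pi(v\otimes u)\in\Delta(\omega-\delta)^{\tau-\delta}$. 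Multiplying by the scalar $c^{-1}$ does not change the weight, proving the required homogeneity.

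The only step that requires any real content is the existence of the good weight grading on $\Delta(\omega)$, $\Delta(-w_0\delta)$, their tensor product, and $\Delta(\omega-\delta)$; this is standard (each is a rational $G$-module, hence a rational $T$-module). I do not anticipate any obstacle: the argument is entirely formal and uses only that $u$ is a weight vector, that $\pi$ is a $T$-map, and that the kernel of a weight-graded linear map is a weight-graded subspace.
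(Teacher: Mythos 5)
Your proof is correct. Both you and the paper reduce the claim to showing that $d^{\,\omega}_\delta$ carries each weight space $\Delta(\omega)^\tau$ into $\Delta(\omega-\delta)^{\tau-\delta}$, so that the kernel splits into its weight components and is therefore $T$-stable. The difference lies in how this weight-homogeneity is established. You unwind the \emph{construction} from the proof of Lemma~\ref{lemma:U^-}: since $d^{\,\omega}_\delta(v)=c^{-1}\pi(v\otimes u)$ with $u$ a weight vector and $\pi$ a $G$-module (in particular $T$-module) map, weight-homogeneity is immediate. The paper instead works purely from the \emph{statement} of Lemma~\ref{lemma:U^-}: each weight component $v_i$ of $v\in\ker d^{\,\omega}_\delta$ is written as $F_i e^+_\omega$ with $F_i\in\U^-$ a weight element, and applying the $\U^-$-homomorphism $d^{\,\omega}_\delta$ sends this to $F_i e^+_{\omega-\delta}$; since the $F_i$ have distinct weights, the images lie in distinct weight spaces and so each must vanish. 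The paper's route has the advantage of depending only on the two advertised properties of $d^{\,\omega}_\delta$ ($\U^-$-linearity and $e^+_\omega\mapsto e^+_{\omega-\delta}$) together with the standard fact that $\Delta(\omega)=\U^- e^+_\omega$, whereas yours reaches inside the earlier proof and invokes the stronger fact that $\pi$ is $G$-equivariant; conversely, your version makes the weight-shift by $-\delta$ fully explicit rather than implicit.
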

\begin{proof} Let $v\in\ker d^{\,\omega}_\delta$. Consider the weight decomposition
$v=v_1+\cdots+v_m$, where each $v_i\ne0$. It suffices to show that each
$v_i\in\ker d^{\,\omega}_\delta$.

There exist weight elements $F_1,\ldots,F_m$ of $\U^-$ such that
$v_i=F_ie^+_\omega$ for any $i=1,\ldots,m$. We have
$$
0=d^{\,\omega}_\delta(v)=d^{\,\omega}_\delta(v_1)+\cdots+d^{\,\omega}_\delta(v_m)=
F_1e^+_{\omega-\delta}+\cdots+F_me^+_{\omega-\delta}.
$$
Since the weights of $F_1,\ldots,F_m$ are mutually distinct, we obtain
$0\!=\!F_ie^+_{\omega-\delta}\!=\!d^{\,\omega}_\delta(v_i)$ for any $i=1,\ldots,m$.
\end{proof}

\begin{corollary}\label{corollary:lcwman:0}
Let $F{\in}\U^-$ and $\omega{\in}X^+(T)$. If $(F,\omega)$
is reducible then $Fe^+_\omega{\ne}0$.
\end{corollary}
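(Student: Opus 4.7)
My plan is to prove the corollary by induction on the length $k$ of the reduction sequence~(\ref{equation:lcwman:-4}) witnessing that $(F,\omega)$ is reducible. The base case $k=0$ is essentially a tautology: the pair is already $(c,0)$ with $c\in\K^*$ and $\omega=0$, so $Fe^+_\omega=ce^+_0\ne0$.

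For the inductive step, I would focus on the first transformation in the sequence, namely the step $(F,\omega)=(F^{(k)},\omega^{(k)})\mapsto(F^{(k-1)},\omega^{(k-1)})$, and rely on the inductive hypothesis $F^{(k-1)}e^+_{\omega^{(k-1)}}\ne0$ applied to the tail of length $k-1$. If the first step is of type~\ref{transformation:a}, so that $F^{(k-1)}=\r_{\alpha,m}^{\,\omega}(F)$ and $\omega^{(k-1)}=\omega$, then property~\ref{property:r-2} (applied with the primitive vector $v=e^+_\omega$) gives
$$
X_{\alpha,m}Fe^+_\omega=\r_{\alpha,m}^{\,\omega}(F)e^+_\omega=F^{(k-1)}e^+_{\omega}\ne0,
$$
which forces $Fe^+_\omega\ne0$. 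If instead the first step is of type~\ref{transformation:b}, so that $F^{(k-1)}=F$ and $\omega^{(k-1)}=\omega-\delta$ with $\delta,\omega-\delta\in X^+(T)$, then I would invoke Lemma~\ref{lemma:U^-} to obtain the $\U^-$-homomorphism $d^{\,\omega}_\delta:\Delta(\omega)\to\Delta(\omega-\delta)$ sending $e^+_\omega$ to $e^+_{\omega-\delta}$. Since $d^{\,\omega}_\delta$ commutes with the left action of $F\in\U^-$,
$$
d^{\,\omega}_\delta(Fe^+_\omega)=Fe^+_{\omega-\delta}\ne0
$$
by the inductive hypothesis, and hence $Fe^+_\omega\ne0$.

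There is no real obstacle here: all the substantive work has already been done in Lemma~\ref{lemma:U^-} (which supplies the crucial degree-lowering $\U^-$-homomorphism $d^{\,\omega}_\delta$) and in the construction of the operators $\r_{\alpha,m}^{\,\omega}$, together with property~\ref{property:r-2}. The corollary is then just a bookkeeping argument that walks backwards along the reduction sequence, translating each abstract transformation into a concrete nonvanishing statement inside $\Delta(\omega)$.
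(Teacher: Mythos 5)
Your proof is correct and follows essentially the same route as the paper: induction on the length of the reduction sequence, with the two cases for transformations~\ref{transformation:a} and~\ref{transformation:b} handled by property~\ref{property:r-2} and Lemma~\ref{lemma:U^-} respectively. No differences of substance to remark on.
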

\begin{proof} We apply induction on the length $k+1$ of sequence~(\ref{equation:lcwman:-4})
satisfying properties~\ref{definition:lcwman:0:property:1} and~\ref{definition:lcwman:0:property:2}
of Definition~\ref{definition:lcwman:0}.
%which reduces $(F,\omega)$ to $(c,0)$.
For $k=0$, the result is obvious, since in that case $F=c\in\K^*$.

Now suppose that $k>0$. Then the inductive hypothesis yields
\begin{equation}\label{equation:lcwman:-1.5}
F^{(k{-}1)}e^+_{\omega^{(k{-}1)}}\linebreak\ne0,
\end{equation}
since the sequence $(F^{(k-1)},\omega^{(k-1)}),\ldots,(c,0)$ has length $k$ and we assume that for sequences of length
smaller that $k+1$ the result is true.

{\it Case 1: $(F^{(k-1)},\omega^{(k-1)})$ is derived from
             $(F,\omega)$ by
             transformation~\ref{transformation:a}.}
In that case, we have $\omega^{(k-1)}=\omega$ and
$F^{(k-1)}=\r_{\alpha,m}^{\,\omega}(F)$ for some
$\alpha\in\Sigma^+$ and
$m\in\Z^+$.
By~\ref{property:r-2} and
%the inductive hypothesis,
(\ref{equation:lcwman:-1.5}) we obtain
$$
X_{\alpha,m}Fe^+_\omega=
\r_{\alpha,m}^{\,\omega}(F)e^+_\omega=
F^{(k-1)}e^+_{\omega^{(k-1)}}\ne0,
$$
whence $Fe^+_\omega\ne0$.

{\it Case 2: $(F^{(k-1)},\omega^{(k-1)})$ is derived from
             $(F,\omega)$ by transformation~\ref{transformation:b}.}
In that case, we have $F^{(k-1)}=F$ and
$\omega^{(k-1)}=\omega-\delta$ for some $\delta\in X^+(T)$.
By Lemma~\ref{lemma:U^-} and
%the inductive hypothesis,
(\ref{equation:lcwman:-1.5}) we obtain
$$
d^{\,\omega}_\delta(Fe^+_\omega)=
Fd^{\,\omega}_\delta\(e^+_\omega\)=Fe^+_{\omega-\delta}
=F^{(k-1)}e^+_{\omega^{(k-1)}}\ne0,
$$
whence again $Fe^+_\omega\ne0$.
\end{proof}

It follows from this corollary and Remark~\ref{remark:lcwman2:0}
that Conjecture~\ref{conjecture:lcwman2:B} implies Conjecture~\ref{conjecture:lcwman2:A}
as follows:
$$
\begin{array}{l}
(F,\omega)\mbox{ is reducible}\stackrel{\text{Cor.\ref{corollary:lcwman:0}}}{\Longrightarrow}Fe^+_\omega\ne0,\\[6pt]
Fe^+\ne0\stackrel{\text{Con.\ref{conjecture:lcwman2:B}}}{\Longrightarrow}(F,\omega)\mbox{ is simply reducible}\stackrel{\text{Rem.\ref{remark:lcwman2:0}}}{\Longrightarrow}(F,\omega)\mbox{ is reducible}.
\end{array}
$$

\subsection{Reformulations} The following result will be used in
Section~\ref{Criterion for A_ell}.

\begin{lemma}\label{lemma:reformulation:B}
Conjecture~\ref{conjecture:lcwman2:B} holds for all $(F,\omega)$ if and only if
for any nonzero $\omega\in X^+(T)$, the intersection
\begin{equation}\label{equation:lcwman:-1}
\bigcap\left\{\ker d^{\,\omega}_{\omega_i}\|i=1,\ldots,\ell\mbox{ and }\omega-\omega_i\in X^+(T)\right\}
\end{equation}
does not contain
%nonzero $G$-submodules.
%nonzero vectors $v$ such that $X_\alpha v=0$ for any $\alpha\in\Sigma^+$.
nonzero simply primitive vectors.
\end{lemma}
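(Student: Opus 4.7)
The plan is to prove the two implications separately, using repeatedly that every weight vector of $\Delta(\omega)$ has the form $Fe^+_\omega$ for some weight element $F\in\U^-$.

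For the ``only if'' direction, I would argue by contradiction. Assume Conjecture~\ref{conjecture:lcwman2:B} and suppose that for some nonzero $\omega\in X^+(T)$ the intersection~(\ref{equation:lcwman:-1}) contains a nonzero simply primitive vector $v=Fe^+_\omega$ with $F$ a weight element. Conjecture~\ref{conjecture:lcwman2:B} then provides a simply reducing sequence $(F,\omega)=(F^{(k)},\omega^{(k)}),\ldots,(F^{(0)},\omega^{(0)})=(c,0)$ with $c\in\K^*$; after deleting the vacuous~\ref{transformation:b}-steps with $\delta=0$ we still have $k\ge 1$ because $\omega\ne 0$. I would then prove by downward induction on $j$ that $F^{(j)}e^+_{\omega^{(j)}}=0$ for every $j\le k-1$. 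For the base case $j=k-1$: if the first step is of type~\ref{transformation:a} with $m=1$ and root $\alpha$, then property~\ref{property:r-2} combined with the simple primitivity of $v$ gives $F^{(k-1)}e^+_\omega=X_{\alpha,1}v=0$; if it is of type~\ref{transformation:b} with some dominant $\delta\ne 0$, write $\delta=\sum_j b_j\omega_j$ and pick an index $i$ with $b_i>0$, so that both $\delta-\omega_i$ and $\omega-\omega_i=(\omega-\delta)+(\delta-\omega_i)$ are dominant, and property~\ref{property:d-1} yields $d^\omega_\delta=d^{\omega-\omega_i}_{\delta-\omega_i}d^\omega_{\omega_i}$. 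Combined with $v\in\ker d^\omega_{\omega_i}$, this forces $F^{(k-1)}e^+_{\omega-\delta}=d^\omega_\delta(v)=0$. The inductive step is immediate, since both $X_{\alpha,1}$ and $d^{\omega^{(j+1)}}_\delta$ annihilate zero. Taking $j=0$ gives $ce^+_0=0$, contradicting $c\in\K^*$.

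For the ``if'' direction, Corollary~\ref{corollary:lcwman:0} together with Remark~\ref{remark:lcwman2:0} already yields ``simply reducible $\Rightarrow Fe^+_\omega\ne 0$''. I would establish the converse by induction on the pair $(\h(\omega),\,n)$ in lexicographic order, where $n$ denotes the ordinary simple-root height of the negative of the weight of $F$ (a non-negative integer since $F\in\U^-$). The base case $\omega=0$ is immediate: $Fe^+_0\ne 0$ forces $F$ to be a nonzero scalar. For $\omega\ne 0$, set $v:=Fe^+_\omega\ne 0$. If $v$ is simply primitive, the hypothesis provides an index $i$ with $\omega-\omega_i\in X^+(T)$ and $Fe^+_{\omega-\omega_i}=d^\omega_{\omega_i}(v)\ne 0$; since $\h(\omega-\omega_i)<\h(\omega)$, the inductive hypothesis gives simple reducibility of $(F,\omega-\omega_i)$, and prepending the~\ref{transformation:b}-step with $\delta=\omega_i$ transfers it to $(F,\omega)$. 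Otherwise some $\alpha\in\Sigma^+$ satisfies $X_{\alpha,1}v\ne 0$; by~\ref{property:r-2}, the weight element $F':=\r_{\alpha,1}^{\,\omega}(F)$ satisfies $F'e^+_\omega\ne 0$, and by~\ref{property:r-1} its weight strictly exceeds that of $F$, so the inner coordinate $n$ strictly decreases. Induction gives simple reducibility of $(F',\omega)$, and prepending the~\ref{transformation:a}-step with $m=1$ yields the same for $(F,\omega)$.

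The main obstacle is the forward direction: a priori the first transformation in the simply reducing sequence might be~\ref{transformation:b} with a dominant $\delta$ that is neither zero nor a fundamental weight, whereas the hypothesis only controls the kernels $\ker d^\omega_{\omega_i}$. Using property~\ref{property:d-1} to peel off a single fundamental weight $\omega_i$ from $\delta$ is the crucial device that closes this gap; no analogous difficulty arises for transformation~\ref{transformation:a}, since simple primitivity directly handles its only allowed case $m=1$.
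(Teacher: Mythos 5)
Your proof is correct and follows essentially the same strategy as the paper's: split into the two implications, use properties~\ref{property:r-2} and~\ref{property:d-1} together with simple primitivity and membership in the kernels $\ker d^{\,\omega}_{\omega_i}$, and for the ``if'' part induct on a pair measuring $\omega$ and the weight of $F$, with the two cases being ``$v$ is not simply primitive'' and ``$v$ is not in the intersection''. Two small deviations are worth noting, both harmless and arguably cleaner. In the ``only if'' direction the paper does not propagate the vanishing all the way down to $j=0$: it instead observes that $(F^{(k-1)},\omega^{(k-1)})$ is itself reducible, invokes Corollary~\ref{corollary:lcwman:0} to get $F^{(k-1)}e^+_{\omega^{(k-1)}}\ne 0$, and derives the contradiction immediately at the first step; your downward induction avoids that appeal to Corollary~\ref{corollary:lcwman:0} at the cost of an extra (easy) induction. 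In the ``if'' direction the paper inducts on $(-\tau,\h(\omega))$ under the componentwise partial order, while you use the lexicographic order on $(\h(\omega),\h(-\tau))$, which is more explicitly well-founded; both decrease strictly in each case. One cosmetic omission: you should justify (as the paper does via Lemma~\ref{lemma:lcwman:-1} and the fact that the weight components of a simply primitive vector are simply primitive) that the hypothetical vector $v$ in the ``only if'' direction may be taken to be a weight vector, so that $F$ can indeed be chosen as a weight element.
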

\begin{proof}
{\it ``Only if'' part.} Suppose that $\omega$ is a nonzero dominant weight
and~(\ref{equation:lcwman:-1}) contains a nonzero
simply primitive vector $v$. By Lemma~\ref{lemma:lcwman:-1}, we cam assume that $v$ is a weight vector.
We obviously have $v=Fe^+_\omega$ for some weight element $F$ of $\U^-$.
We claim that $(F,\omega)$ is not simply reducible, i.e. Conjecture~\ref{conjecture:lcwman2:B}
is violated for $(F,\omega)$.

Suppose, on the contrary, that $(F,\omega)$ is simply reducible to $(c,0)$, where $c\in\K^*$.
Then there exists a sequence of form~(\ref{equation:lcwman:-4})
satisfying properties~\ref{definition:lcwman:0:property:1} and~\ref{definition:lcwman:0:property:2} of Definition~\ref{definition:lcwman:0},
where in~\ref{definition:lcwman:0:property:2} $m$ always equals $1$ if transformation~\ref{transformation:a}
is applied. Without loss of generality we may suppose that
in each transformation~\ref{transformation:b} used to derive $(F^{(i)},\omega^{(i)})$ from $(F^{(i+1)},\omega^{(i+1)})$,
we take $\delta\ne0$. We have $k>0$, since otherwise $F=c$ and $v\notin\ker d^{\,\omega}_{\omega_i}$
for any $i=1,\ldots,\ell$ such that $\omega-\omega_i\in X^+(T)$.
Note that at least one such $i$ exists since $\omega\ne0$.
Hence $v$ does not belong to~(\ref{equation:lcwman:-1}).

%Let $(F',\omega')$ denote the pair of our sequence directly following
%$(F,\omega)$. Since $(F',\omega')$ is reducible to $(c,0)$,
Like any pair of sequence~(\ref{equation:lcwman:-4}), the pair $(F^{(k-1)},\omega^{(k-1)})$  is reducible.
Thus Corollary~\ref{corollary:lcwman:0} implies
\begin{equation}\label{equation:lcwman:-0.75}
F^{(k-1)}e^+_{\omega^{(k-1)}}\ne0.
\end{equation}

%{\it Case 1: $(F',\omega')$
{\it Case 1: $(F^{(k-1)},\omega^{(k-1)})$
is derived from $(F,\omega)$ by transformation~\ref{transformation:a} with $m=1$.}
In that case, we have $\omega^{(k-1)}=\omega$ and
$F^{(k-1)}=\r_{\alpha,1}^{\,\omega}(F)$ for some $\alpha\in\Sigma^+$.
Applying the fact that $v$ is simply primitive and~\ref{property:r-2},
we obtain
$$
0=X_{\alpha,1}v=X_{\alpha,1}Fe^+_\omega=\r_{\alpha,1}^{\,\omega}(F)e^+_\omega=F^{(k-1)}e^+_{\omega^{(k-1)}},
$$
which contradicts~(\ref{equation:lcwman:-0.75}).

{\it Case 2: $(F^{(k-1)},\omega^{(k-1)})$ is derived from $(F,\omega)$ by
             transformation~\ref{transformation:b}.}
In that case, we have $F^{(k-1)}=F$ and $\omega^{(k-1)}=\omega-\delta$
for some nonzero weight $\delta$ of $X^+(T)$.
Since we supposed that $\delta\ne0$, there exists $j=1,\ldots,\ell$ such that
$\delta-\omega_j\in X^+(T)$. Since $\omega^{(k-1)}\in X^+(T)$,
we clearly have $\omega-\omega_j=\omega^{(k-1)}+\delta-\omega_j\in X^+(T)$.
Thus $v\in\ker d^{\,\omega}_{\omega_j}$, since $v$ belongs
to~(\ref{equation:lcwman:-1}). By~\ref{property:d-1}, we obtain
$$
0=d^{\,\omega-\omega_j}_{\delta-\omega_j}d^{\,\omega}_{\omega_j}(v)=
d^{\,\omega}_\delta(v)=F d^{\,\omega}_\delta(e^+_\omega)=
Fe^+_{\omega-\delta}=F^{(k-1)}e^+_{\omega^{(k-1)}},
$$
which again contradicts~(\ref{equation:lcwman:-0.75}).

{\it ``If'' part.} Suppose that for
any nonzero dominant weight $\omega$, intersection~(\ref{equation:lcwman:-1})
does not contain nonzero simply primitive vectors.
In view of Corollary~\ref{corollary:lcwman:0}, we must only prove
that for any $F\in\U^-$ having weight $\tau$ and $\omega\in X^+(T)$
such that $Fe^+_\omega\ne0$, the pair $(F,\omega)$ is simply reducible.

We apply induction on $(-\tau,\h(\omega))$ with
componentwise order on such pairs.
%,
%we shall prove the following fact: if $F$ is an element of $\U^-$
%having weight $\tau$, $\omega\in X^+(T)$ and $Fe^+_\omega\ne0$,
%then $(F,\omega)$ is simply reducible.
%
%can be reduced to a pair of the form $(c,0)$, where $c\in\K^*$,
%by a series of transformations~\ref{transformation:a}
%for $m=1$ and~\ref{transformation:b}.
%
If $\omega=0$ then $Fe^+_\omega\ne0$ implies that $\tau=0$ and
$F$ is a nonzero element of $\K$. In that case $(F,\omega)$
%already has the required form.
is simply reducible (to itself).
Therefore, we suppose
that $\omega\ne0$.
%By the hypothesis of this lemma,
%one of the two cases occurs.

%Then there exists $i=1,\ldots,l$ such that
%$\omega-\omega_i\in X^+(T)$.

%{\it Case 1: $X_\alpha Fe^+_\omega\ne0$ for some $\alpha\in\Sigma^+$.}
{\it Case 1: $Fe^+_\omega$ is not simply primitive.}
We have $X_{\alpha,1}Fe^+_\omega\ne0$ for some $\alpha\in\Sigma^+$.
By~\ref{property:r-2}, we obtain
$$
0\ne X_{\alpha,1}Fe^+_\omega=\r_{\alpha,1}^{\,\omega}(F)e^+_\omega.
$$
By~\ref{property:r-1}, the vector $\r_{\alpha,1}^{\,\omega}(F)$
has weight $\tau+\alpha\le 0$. Since
$(-(\tau+\alpha),\h(\omega))$ is smaller than
$(-\tau,\h(\omega))$, the inductive hypothesis implies
that $(\r_{\alpha,1}^{\,\omega}(F),\omega)$
%can be reduced to a pair of the form $(c,0)$, where $c\in\K^*$,
%by a series of transformations~\ref{transformation:a}
%with $m=1$ and~\ref{transformation:b}.
is simply reducible.
It remains to notice
that $(\r_{\alpha,1}^{\,\omega}(F),\omega)$ is derived from $(F,\omega)$
by transformation~\ref{transformation:a} with $m=1$.

%{\it Case 2: $d^{\,\omega}_{\omega_i}(Fe^+_\omega)\ne0$
%for some $i=1,\ldots,l$ such that $\omega-\omega_i\in X^+(T)$.}
{\it Case 2: $Fe^+_\omega$ does not belong to~{\rm(}\ref{equation:lcwman:-1}{\rm)}.}
There exists some $j=1,\ldots,\ell$ such that $\omega-\omega_j\in X^+(T)$
and $Fe^+_\omega\notin\ker d^{\,\omega}_{\omega_j}$. We have
$$
0\ne d^{\,\omega}_{\omega_j}(Fe^+_\omega)=
Fd^{\,\omega}_{\omega_j}(e^+_\omega)=Fe^+_{\omega-\omega_j}.
$$
Since $\h(\omega-\omega_j)=\h(\omega)-1$, the pair
$(-\tau,\h(\omega-\omega_j))$ is smaller than
the pair $(-\tau,\h(\omega))$ and the inductive hypothesis implies
that $(F,\omega-\omega_j)$
%can be reduced to a pair of the form $(c,0)$,
%where $c\in\K^*$, by a series of transformations~\ref{transformation:a}
%with $m=1$ and~\ref{transformation:b}.
is simply reducible.
It remains to notice that $(F,\omega-\omega_j)$ is derived from $(F,\omega)$
by transformation~\ref{transformation:b}.

It is natural to consider the remaining case when we are in neither Case~1 nor Case~2.
Then $Fe^+_\omega$ is a nonzero simply primitive vector belonging to~(\ref{equation:lcwman:-1}).
Since $\omega\ne0$, this contradicts the assumption of this part of the proof.
\end{proof}

Arguing as above but dropping the word ``simply'' and the condition $m{=}1$,
we obtain the following result.

\begin{lemma}\label{lemma:reformulation:A}
Conjecture~\ref{conjecture:lcwman2:A} holds if and only if
for any nonzero $\omega\in X^+(T)$,
intersection~{\rm(}\ref{equation:lcwman:-1}{\rm)} does not contain
nonzero primitive vectors {\rm(}equivalently, nonzero $G$-submodules{\rm)}.
\end{lemma}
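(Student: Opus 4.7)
The plan is to rerun the proof of Lemma~\ref{lemma:reformulation:B} almost word for word, subject to three systematic substitutions: replace ``simply primitive'' by ``primitive'' throughout, replace ``simply reducible'' by ``reducible'', and drop the restriction $m=1$ on transformation~\ref{transformation:a}. The reason these substitutions produce a valid argument is that full primitivity of a vector $v$ means $X_{\alpha,m}v=0$ for \emph{all} $\alpha\in\Sigma^+$ and \emph{all} $m>0$, which is exactly what is needed to match transformation~\ref{transformation:a} with arbitrary $m$.

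For the ``only if'' direction, suppose some nonzero $\omega\in X^+(T)$ admits a nonzero primitive vector $v$ lying in intersection~(\ref{equation:lcwman:-1}). By Lemma~\ref{lemma:lcwman:-1} I may assume $v$ is a weight vector, and write $v=Fe^+_\omega$. Assuming for contradiction that $(F,\omega)$ is reducible to some $(c,0)$, pick such a reducing sequence and examine how $(F^{(k-1)},\omega^{(k-1)})$ arises from $(F,\omega)$. If transformation~\ref{transformation:a} is used, then $F^{(k-1)}=\r_{\alpha,m}^{\,\omega}(F)$ for some $\alpha\in\Sigma^+$ and $m\in\Z^+$, and~\ref{property:r-2} together with $X_{\alpha,m}v=0$ yields $F^{(k-1)}e^+_{\omega^{(k-1)}}=0$, contradicting Corollary~\ref{corollary:lcwman:0}. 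If transformation~\ref{transformation:b} is used, the argument is verbatim that of Lemma~\ref{lemma:reformulation:B}, ending in $Fe^+_{\omega-\delta}=0$ via~\ref{property:d-1} and the kernel hypothesis.

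For the ``if'' direction, I induct on $(-\tau,\h(\omega))$ just as before. The base case $\omega=0$ and the overall structure are unchanged. Case~1 now reads: $Fe^+_\omega$ fails to be primitive, so $X_{\alpha,m}Fe^+_\omega\ne0$ for some $\alpha\in\Sigma^+$ and $m>0$; then~\ref{property:r-2} and~\ref{property:r-1} let the inductive hypothesis apply to $(\r_{\alpha,m}^{\,\omega}(F),\omega)$, which is derived from $(F,\omega)$ by transformation~\ref{transformation:a} with this $m$ (no restriction needed). Case~2 is literally unchanged. The residual case would produce a nonzero primitive vector $Fe^+_\omega$ in~(\ref{equation:lcwman:-1}), contradicting the hypothesis.

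The genuinely new point, which I expect to be the main obstacle, is the parenthetical equivalence between ``contains a nonzero primitive vector'' and ``contains a nonzero $G$-submodule''. The intersection~(\ref{equation:lcwman:-1}) is a $\U^-$-submodule of $\Delta(\omega)$ because each $d^{\,\omega}_{\omega_i}$ is a $\U^-$-map by Lemma~\ref{lemma:U^-}, and it is $T$-stable by Lemma~\ref{lemma:lcwman:-1}, hence stable under all of $\U^{-,0}$. Given a nonzero primitive weight vector $v$ inside the intersection, the PBW basis of Proposition~\ref{proposition:lcwman2:0} combined with the vanishing $X_{\beta,k}v=0$ for $\beta\in\Sigma^+$ and $k>0$ (primitivity) and $H_{\alpha_i,n}v\in\K v$ (weight-vector property via~\ref{property:ev-1} and~\ref{property:ev-4}) yields $\U v=\U^-v$, a nonzero $G$-submodule sitting inside the intersection. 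Conversely, any nonzero $G$-submodule of the finite-dimensional module $\Delta(\omega)$ contains a nonzero primitive vector, namely a vector of any maximal weight occurring in the submodule. This equivalence lets one state the criterion in either form.
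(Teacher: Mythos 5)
Your proposal is correct and follows exactly the route the paper intends: the paper's own proof of this lemma is the one-line remark ``arguing as above but dropping the word `simply' and the condition $m=1$,'' which is precisely the triple substitution you carry out. The only thing you add beyond the paper is an explicit verification of the parenthetical equivalence (``nonzero primitive vector'' $\Leftrightarrow$ ``nonzero $G$-submodule'' inside intersection~(\ref{equation:lcwman:-1})), which the paper leaves to the reader; your argument for it---that the intersection is $\U^{-,0}$-stable by Lemma~\ref{lemma:U^-} and Lemma~\ref{lemma:lcwman:-1}, that $\U v=\U^-v$ for a primitive weight vector $v$ by Proposition~\ref{proposition:lcwman2:0}, and that any nonzero $G$-submodule has a vector of maximal weight which is automatically primitive---is sound.
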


\subsection{Twisting operators}\label{twisting operators}
Now we shift attention to the hyperalgebra
by introducing for any $\delta{\in}X(T)$ the {\it twisting operator}
$\theta_\delta$\label{theta} on $\U^{-,0}$ as the $\K$-linear map defined by
$$
{\arraycolsep=1pt
\begin{array}{rcl}
\displaystyle
\theta_\delta
\biggl(
\prod\nolimits_{\alpha\in\Sigma^+}X_{-\alpha,m_{-\alpha}}
&\cdot&\displaystyle
\prod\nolimits_{i=1}^\ell H_{\alpha_i,n_i}
\biggr)=\\[12pt]
\displaystyle
\prod\nolimits_{\alpha\in\Sigma^+}X_{-\alpha,m_{-\alpha}}
&\cdot&\displaystyle
%\prod\nolimits_{i=1}^l\tbinom{H_{\alpha_i}+\<\delta,\alpha_i\>}{n_i}\otimes1_\K.
 \prod\nolimits_{i=1}^\ell\tbinom{H_{\alpha_i}+\delta(H_{\alpha_i})}{n_i}\otimes1_\K.
\end{array}}
$$
The map $\theta_\delta$ enjoys the properties
{
\leftmargini=31pt
\renewcommand{\labelenumi}{{\bf\theenumi}}
\renewcommand{\theenumi}{($\theta$-\arabic{enumi})}
\begin{enumerate}
\item\label{property:theta-1}
      $\theta_\delta$ is a $\K$-algebra automorphism of $\U^{-,0}$;\\[-5pt]
% \item\label{property:theta-2}
% %      $\theta_\delta(H_{\alpha,n})=\tbinom{H_\alpha+\<\delta,\alpha\>}n\otimes1_\K$ for any $\alpha\in\Sigma$
%       $\theta_\delta(H_{\alpha,n})=\tbinom{H_\alpha+\delta(H_\alpha)}n\otimes1_\K$ for any $\alpha\in\Sigma$
%       and $n\in\Z^+$;\\[-5pt]
%
%\item $\ev^\omega\(\tbinom{H_\alpha}n\)=\tbinom{\<\omega,\alpha\>}n$ for any $\alpha\in\Sigma$
%      and $n\in\Z^+$;\\[-5pt]
%
%
%\item $\theta_\delta(PP')=P(H)$ for any $F\in\U^-$ and $H\in\U^0$.
\item\label{property:theta-3}
      %if $v$ is a vector of weight $\omega$ of a rational $G$-module
      %and $P\in\U^{-,0}$, then $Pv=\ev^\omega(P)v$.
      if $\omega,\delta,\omega-\delta\in X^+(T)$, $P\in\U^{-,0}$ and
      $v\in\Delta(\omega)$, then \\[3pt]
      $d^{\,\omega}_\delta(Pv)=\theta_\delta(P)d^{\,\omega}_\delta(v)$.
\end{enumerate}}
We need to comment only on~\ref{property:theta-3}. By linearity,
it is enough to consider only the case $P=F'H$,
where $F'\in\U^-$ and $H\in\U^0$, and $v=Fe^+_\omega$,
where $F$ is an element of $\U^-$ having weight $\tau$.
Applying~\ref{property:ev-4},~\ref{property:theta-1}, the definition of
$\theta_\delta$ and the fact that
$d^{\,\omega}_\delta$ is a $\U^-$-homomorphism, we obtain
$$
\begin{array}{rl}
&d^{\,\omega}_\delta(Pv)=F'd^{\,\omega}_\delta(Hv)=
F'd^{\,\omega}_\delta\(\ev^{\omega+\tau}(H)Fe^+_\omega\)\\[6pt]
&=F'\ev^{\omega+\tau}(H)Fd^{\,\omega}_\delta\(e^+_\omega\)=
\ev^{\omega+\tau}(H)F'Fe^+_{\omega-\delta},\\[6pt]
&\theta_\delta(P)d^{\,\omega}_\delta(v)=F'\theta_\delta(H)Fd^{\,\omega}_\delta(e^+_\omega)=
F'\theta_\delta(H)Fe^+_{\omega-\delta}\\[6pt]
&=\ev^{\omega-\delta+\tau}(\theta_\delta(H))F'Fe^+_{\omega-\delta}.
\end{array}
$$
Hence it remains to prove that
$\ev^{\omega-\delta+\tau}(\theta_\delta\,(H))=\ev^{\omega+\tau}(H)$.
% \begin{equation}\label{equation:lcwman:-0.5}
% %\ev^{\omega+\tau}(H)=\ev^{\omega-\delta+\tau}(\theta_\delta(H)).
%  \ev^{\omega+\tau}(H)=\ev^{\omega-\delta+\tau}\circ\theta_\delta\,(H).
% \end{equation}
By~\ref{property:ev-1} and~\ref{property:theta-1},
it is enough to consider the case $H=H_{\alpha_i,n}$.
We have
%$$
%\begin{array}{l}
%\displaystyle
%\ev^{\omega-\delta+\tau}\theta_\delta\(H_{\alpha_i,n}\)=
%\ev^{\omega-\delta+\tau}\(\tbinom{H_{\alpha_i}+\<\delta,\alpha_i\>}n\otimes1_\K\)=\\[12pt]
%\tbinom{\<\omega-\delta+\tau,\alpha_i\>+\<\delta,\alpha_i\>}n\otimes1_\K
%=\tbinom{\<\omega+\tau,\alpha_i\>}n\otimes1_\K=\ev^{\omega+\tau}\(\tbinom{H_i}{n}\).
%\end{array}
%$$
$$
\begin{array}{l}
\displaystyle
\ev^{\omega-\delta+\tau}\(\theta_\delta\(H_{\alpha_i,n}\)\)=
\ev^{\omega-\delta+\tau}\(\tbinom{H_{\alpha_i}+\delta(H_{\alpha_i})}n\otimes1_\K\)=\\[12pt]
\tbinom{(\omega-\delta+\tau)(H_{\alpha_i})+\delta(H_{\alpha_i})}n\otimes1_\K
=\tbinom{(\omega+\tau)(H_{\alpha_i})}n\otimes1_\K=\ev^{\omega+\tau}(H_{\alpha_i,n}).
\end{array}
$$

For any $\alpha\in\Sigma^+$ and $m\in\Z^+$, we define the $\K$-linear
operator $\eta_{\alpha,m}$\label{eta} on $\U^{-,0}$ as follows.
Let $F$ belong to $\U^-$. Then $X_{\alpha,m}F=P+E$,
where $P\in\U^{-,0}$ and $E$ belongs to the left ideal of $\U$
generated by the elements $X_{\beta,k}$
with $\beta\in\Sigma^+$ and $k>0$.
We set $\eta_{\alpha,m}(F):=P$. The last element already
was introduced, when we defined $\r_{\alpha,m}^{\,\omega}$.
We clearly have $\ev^\omega(\eta_{\alpha,m}(F))=\r_{\alpha,m}^{\,\omega}(F)$.
The operator $\eta_{\alpha,m}$ enjoys the property
{
\leftmargini=33pt
\renewcommand{\labelenumi}{{\bf\theenumi}}
\renewcommand{\theenumi}{($\eta$\,-\arabic{enumi})}
\begin{enumerate}
\item\label{property:eta-1}
      if $v$ is a primitive vector of a rational $G$-module,
      $F\in\U^-$, $\alpha\in\Sigma^+$ and $m\in\Z^+$,
      then $X_{\alpha,m}Fv=\eta_{\alpha,m}(F)v$.
\end{enumerate}}

\renewcommand\vert{\vphantom{{A^A}^A}}

\begin{lemma}\label{lemma:lcwman:0}
Let $\Phi(x_1,\ldots,x_k,y)$ be an element of the free
associative algebra over $\K$ with generators $x_1,\ldots,x_k,y$
{\rm(}noncommutative associative polynomial\,{\rm)} linear in $y$ and
$\omega,\delta,\omega{-}\delta\in X^+(T)$.
Let $D^{\,\omega}_\delta$ be any operator on the module
$V{:=}\Delta(\omega)\oplus\Delta(\omega{-}\delta)$ whose restriction to
$\Delta(\omega)$ coincides with $d^{\,\omega}_\delta$. We have
$$
%\begin{array}{l}
%\displaystyle
\Phi\(X_{\beta_1,m_1}^V,\ldots,X_{\beta_k,m_k}^V,D^{\,\omega}_\delta\)(Fe^+_\omega)=%\\[6pt]
%\displaystyle
\Phi\(\eta_{\vert\beta_1,m_1},\ldots,\eta_{\vert\beta_k,m_k},\theta_\delta\)(F)\cdot e^+_{\omega-\delta}
%\end{array}
$$
for any $\beta_1,\ldots,\beta_k\in\Sigma^+$, $m_1,\ldots,m_k\in\Z^+$ and $F\in\U^-$.
\end{lemma}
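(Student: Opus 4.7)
My plan is to reduce to the case where $\Phi$ is a single monomial and then compute directly by iterating property~\ref{property:eta-1} and applying~\ref{property:theta-3} exactly once. By $\K$-linearity in $\Phi$ and the hypothesis that $\Phi$ is linear in $y$, every monomial appearing in $\Phi$ has exactly one occurrence of $y$. Hence it suffices to verify the identity for $\Phi = A(x_1,\ldots,x_k)\cdot y\cdot B(x_1,\ldots,x_k)$, where $A$ and $B$ are noncommutative monomials in the $x_i$'s.

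The key auxiliary observation is the following: for any dominant weight $\mu\in X^+(T)$, any element $G\in\U^{-,0}$, and any monomial $M$ in the $x_i$'s, one has $M(X^V)(G\cdot e^+_\mu) = M(\eta)(G)\cdot e^+_\mu$. To prove this by induction on the length of $M$, I first extend $\eta_{\alpha,m}$ to all of $\U^{-,0}$ by the same recipe: for $G\in\U^{-,0}$, write $X_{\alpha,m}G = P+E$ with $P\in\U^{-,0}$ and $E$ in the left ideal generated by the $X_{\beta,k}$ with $\beta\in\Sigma^+$ and $k>0$, and set $\eta_{\alpha,m}(G):=P$; this is well defined by Proposition~\ref{proposition:lcwman2:0}. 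Since $e^+_\mu$ is primitive, $E\cdot e^+_\mu=0$, so $X_{\alpha,m}(G\cdot e^+_\mu) = \eta_{\alpha,m}(G)\cdot e^+_\mu$. Because $\eta_{\alpha,m}(G)$ again lies in $\U^{-,0}$, the induction on the length of $M$ iterates cleanly.

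With this at hand, the identity for $\Phi=A\cdot y\cdot B$ is a three-step computation. First, apply the auxiliary observation to $B(X^V)Fe^+_\omega$, viewing $F\in\U^-\subset\U^{-,0}$, to obtain $B(\eta)(F)\cdot e^+_\omega$ in $\Delta(\omega)$. Second, the operator $D^{\,\omega}_\delta$ acts on this vector as $d^{\,\omega}_\delta$ (since the vector lies in $\Delta(\omega)$), so property~\ref{property:theta-3} applied with $P=B(\eta)(F)\in\U^{-,0}$ and $v=e^+_\omega$ gives
$$d^{\,\omega}_\delta\bigl(B(\eta)(F)\cdot e^+_\omega\bigr) = \theta_\delta B(\eta)(F)\cdot e^+_{\omega-\delta}.$$
Third, apply the auxiliary observation once more, now with $\mu=\omega-\delta$ and $G=\theta_\delta B(\eta)(F)\in\U^{-,0}$, to conclude
$$A(X^V)\bigl(\theta_\delta B(\eta)(F)\cdot e^+_{\omega-\delta}\bigr) = A(\eta)\theta_\delta B(\eta)(F)\cdot e^+_{\omega-\delta},$$
which is exactly $\Phi(\eta,\theta_\delta)(F)\cdot e^+_{\omega-\delta}$.

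The one place that requires a little care is the extension of $\eta_{\alpha,m}$ to $\U^{-,0}$ and the iteration of property~\ref{property:eta-1} in that extended form; everything else is bookkeeping that simply tracks how the unique occurrence of $y$ in each monomial of $\Phi$ triggers exactly one application of~\ref{property:theta-3}, sandwiched between two iterations of the extended~\ref{property:eta-1}.
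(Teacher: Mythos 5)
Your proposal is correct and follows essentially the same route as the paper: reduce by linearity to a single monomial $A\cdot y\cdot B$, absorb $B(X^V)$ via iterated applications of~\ref{property:eta-1}, apply~\ref{property:theta-3} once across $D^{\,\omega}_\delta$, then absorb $A(X^V)$ via~\ref{property:eta-1} again. The only genuine addition in your write-up is the explicit remark that $\eta_{\alpha,m}$ must be read as an operator on all of $\U^{-,0}$ (not merely $\U^-$) for the iteration to make sense — the paper declares $\eta_{\alpha,m}$ to be an operator on $\U^{-,0}$ but states the defining recipe only for $F\in\U^-$, and its proof silently uses the extension — so your care on this point tightens the argument without changing its substance.
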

\begin{proof} By linearity, it suffices to prove that
\begin{equation}\label{equation:lcwman:-0.5}
\begin{array}{l}
\displaystyle
X_{\beta_1,m_1}^V\cdots
X_{\beta_q,m_q}^V\,
D^{\,\omega}_\delta\,%\times\\[6pt]\displaystyle\times
X_{\beta_{q+1},m_{q+1}}^V\cdots
X_{\beta_k,m_k}^V
(Fe^+_\omega)\\[6pt]
\displaystyle
=\eta_{\vert\beta_1,m_1}\cdots\eta_{\vert\beta_q,m_q}\,\theta_\delta\,
 \eta_{\vert\beta_{q+1},m_{q+1}}\cdots\eta_{\vert\beta_k,m_k}(F)\cdot
 e^+_{\omega-\delta}.
\end{array}
\end{equation}
By~\ref{property:eta-1} and~\ref{property:theta-3}, the left-hand side of~(\ref{equation:lcwman:-0.5}) equals
$$
\begin{array}{l}
\displaystyle
X_{\beta_1,m_1}\cdots X_{\beta_q,m_q}d^{\,\omega}_\delta(
X_{\beta_{q+1},m_{q+1}}\cdots
X_{\beta_k,m_k}Fe^+_\omega
)\\[6pt]
\displaystyle
=X_{\beta_1,m_1}\cdots X_{\beta_q,m_q}d^{\,\omega}_\delta
\(
\eta_{\vert\beta_{q+1},m_{q+1}}\cdots
\eta_{\vert\beta_k,m_k}(F)\cdot e^+_\omega
\)\\[6pt]
\displaystyle
=X_{\beta_1,m_1}\cdots X_{\beta_q,m_q}%\cdot
\(\theta_\delta\,\eta_{\vert\beta_{q+1},m_{q+1}}\cdots\eta_{\vert\beta_k,m_k}(F)\)\cdot e^+_{\omega-\delta}\\[6pt]
\displaystyle
=\eta_{\vert\beta_1,m_1}\cdots\eta_{\vert\beta_q,m_q}\,\theta_\delta\,
 \eta_{\vert\beta_{q+1},m_{q+1}}\cdots\eta_{\vert\beta_k,m_k}(F)\cdot
 e^+_{\omega-\delta},
\end{array}
$$
which is exactly the right-hand side of~(\ref{equation:lcwman:-0.5}).
\end{proof}

\subsection{Example for $G=B_2(\K)$ and root $2\alpha+\beta$}\label{Example for GB2K and root 2alpha+beta}
We are going to show a possible
application of Corollary~\ref{corollary:lcwman:0}. Let $\mathop{\rm char}\K=p>2$,
$G=B_2(\K)$ and $\Sigma^+=\{\alpha,\beta\}$, where $\alpha$
is short and $\beta$ is long. We denote by $\omega_\alpha$ and
$\omega_\beta$ the fundamental weights corresponding
to $\alpha$ and $\beta$ respectively. Take a dominant weight
$\omega=a\omega_\alpha+b\omega_\beta$ such that $a\ge2$, $b\ge1$ and
$a+b+1\=0\pmod p$.
We claim that $\Hom_G(\Delta(\omega-(2\alpha+\beta)),\Delta(\omega))\ne0$.
Indeed, consider the elements
$$
{\arraycolsep=0pt
\begin{array}{l}
Y=X_{-\alpha,1}^2X_{-\beta,1}-N_{\alpha,\beta}\,b\,X_{-\alpha,1}X_{-\alpha-\beta,1}+b(b+1)\tfrac{N_{\alpha,\beta}N_{\alpha,\alpha+\beta}}2X_{-2\alpha-\beta,1},\\[6pt]
Z=2X_{-\alpha,1}X_{-\beta,1}-N_{\alpha,\beta}\,b\,X_{-\alpha-\beta,1},
\end{array}}
$$
where $[X_\delta,X_\gamma]=N_{\delta,\gamma}X_{\delta+\gamma}$
for $\delta,\gamma,\delta+\gamma\in\Sigma$.
Elementary calculations show
$$
%{
%\arraycolsep=0pt
%\begin{array}{l}
\r_{\alpha,1}^\omega(Y){=}(a+b+1)Z,\quad \r_{\alpha,2}^\omega(Y){=}(a+b+2)(a+b+1)X_{-\beta,1},\quad \r_{\beta,1}^\omega(Y){=}0.
% ,\quad\eta_{\beta,1}(S_{2\alpha+\beta})=0,\\[6pt]
% \eta_{\alpha,2}(S_{2\alpha+\beta})=X_{-\beta}(H_\alpha+H_\beta+2)(H_\alpha+H_\beta+1),\\[6pt]
% \eta_{\beta,1}(S'_{\alpha+\beta})=X_{-\alpha}H_{\beta},\quad \eta_{\alpha,1}(X_{-\alpha})=H_\alpha.
%\end{array}}
$$
The equivalence $a+b+1\=0\pmod p$ and~\ref{property:r-2} imply that
$Ye^+_\omega$ is a primitive vector of $\Delta(\omega)$ having weight
$\omega-(2\alpha+\beta)$.
%
%Note that $S_{2\alpha+\beta}e^+_\omega{=}\ev^\omega(S_{2\alpha+\beta})e^+_\omega$ by~\ref{property:ev-4}.
We are going to show that this vector is nonzero by showing that the pair
$(Y,\omega)$ is reducible.
Choose any integers $a'$ and $b'$ such that $0\le a'\le a$ and $0\le b'\le b$ and
set $\omega':=a'\omega_\alpha+b'\omega_\beta$.
%
%$$
%(Y,\omega)\longrightarrow
%(Y,\omega-\omega')\;
%((a-a'+b+1)Z,\omega-\omega')\;
%(a'(2b'-b)X_{-\alpha},\omega-\omega')\;
%(a'(2b'-b)(a-a'),\omega-\omega')\;
%(a'(2b'-b)(a-a'),0)
%$$
Consider the following sequence of transformations:
$$
{
\arraycolsep=0pt
\begin{array}{l}
\begin{CD}
(Y,\omega)
@>\delta=\omega'>\text{transf.~\ref{transformation:b}}>
(Y,\omega{-}\omega')
@>\r_{\alpha,1}^{\omega-\omega'}>\text{transf.~\ref{transformation:a}}>
((a{-}a'{+}b{+}1)Z,\omega{-}\omega')
@>\r_{\beta,1}^{\omega-\omega'}>\text{transf.~\ref{transformation:a}}>
\end{CD}\\[16pt]
\begin{CD}
(a'(2b'{-}b)X_{-\alpha,1},\omega{-}\omega')
@>\r_{\alpha,1}^{\omega-\omega'}>\text{transf.~\ref{transformation:a}}>
(a'(2b'{-}b)(a{-}a')\,1_\K,\omega{-}\omega')
@>\delta=\omega-\omega'>\text{transf.~\ref{transformation:b}}>
\end{CD}\\[20pt]
\begin{CD}
(a'(2b'{-}b)(a{-}a')\,1_\K,0).
\end{CD}
\end{array}}
$$
%\newpage
\vspace{2pt}

\noindent
To ensure that $a'(2b'-b)(a-a')\not\equiv0\pmod p$, we set
$a':=1$ if \linebreak $a\not\equiv1\pmod p$,
$a':=2$ if $a\equiv1\pmod p$,
$b':=0$ if $b\not\equiv0\pmod p$
and $b':=1$ if $b\equiv0\pmod p$.

\subsection{Example for $G=B_2(\K)$ and root $\alpha+\beta$}\label{Example for GB2K and root alpha+beta}
In this example, we use the same notation as in the previous one
but assume that $\mathop{\rm char}\K$ is an arbitrary prime $p$.
Take a dominant weight $\omega=a\omega_\alpha+b\omega_\beta$ such that $a\ge1$,
$b\ge1$ and $a+2b+2\=0\pmod p$. We claim that
$\Hom_G(\Delta(\omega-(\alpha+\beta)),\Delta(\omega))\ne0$.
Indeed, consider the element
$$
Y=X_{-\alpha,1}X_{-\beta,1}-N_{\alpha,\beta}\,b\,X_{-\alpha-\beta,1}.
$$
Elementary calculations show
$$
\r_{\alpha,1}^\omega(Y)=(a+2b+2)X_{-\beta,1},\quad \r_{\beta,1}^\omega(Y)=0.
$$
The equivalence $a+2b+2\=0\pmod p$ and~\ref{property:r-2} imply that
$Ye^+_\omega$ is a primitive vector of $\Delta(\omega)$ having weight
$\omega-(\alpha+\beta)$. We are going to show that this vector is nonzero
by showing that the pair $(Y,\omega)$ is reducible.
Choose any integers $a'$ and $b'$ such that $0\le a'\le a$ and $0\le b'\le b$ and
set $\omega':=a'\omega_\alpha+b'\omega_\beta$.
Consider the following sequence of transformations:
$$
{
\arraycolsep=0pt
\begin{array}{l}
\begin{CD}
(Y,\omega)\!
@>\delta=\omega'>\text{transf.~\ref{transformation:b}}>
\!
(Y,\omega{-}\omega')
@>\r_{\alpha,1}^{\omega-\omega'}>\text{transf.~\ref{transformation:a}}>
\!
((a{-}a'{+}2b{+}2)X_{-\beta,1},\omega{-}\omega')
\!
@>\r_{\beta,1}^{\omega-\omega'}>\text{transf.~\ref{transformation:a}}>\!\!\!
\end{CD}\\[16pt]
\begin{CD}
(a'(b'{-}b)\,1_\K,\omega{-}\omega')
%@>\r_{\alpha,1}^{\omega-\omega'}>\text{transf.~\ref{transformation:a}}>
%(a'(2b'{-}b)(a{-}a')\,1_\K,\omega{-}\omega')
@>\delta=\omega-\omega'>\text{transf.~\ref{transformation:b}}>
(a'(b'{-}b)\,1_\K,0).
\end{CD}%\\[20pt]
%\begin{CD}
%(a'(2b'{-}b)(a{-}a')\,1_\K,0).
%\end{CD}
\end{array}}
$$
To ensure that $a'(b'-b)\not\equiv0\pmod p$, we set
$a':=1$ and $b':=0$ if $b\not\equiv0\pmod p$ and
$b':=1$ if $b\equiv0\pmod p$.

\section{Criterion for $G=A_\ell(\K)$.}\label{Criterion for A_ell}

\subsection{Flows}\label{Flows}
%We are going to introduce graphs of special type called flows.
%We shall need them in Lemma~\ref{lemma:lcwman:4} and Corollary~\ref{corollary:lcwman:4},
%where the summation parameter $\Gamma$ is a flow.
In what follows, we call $s$ the {\it beginning} and $t$ the {\it end}
of $(s,t)$ or say that $(s,t)$ {\it begins} at $s$ and {\it ends} at $t$.

\begin{definition}\label{definition:lcwman:1}
A directed graph $\Gamma$ is called a flow with sources $a_1,\ldots,a_q$
and sinks $b_1,\ldots,b_q$ if
{
\begin{enumerate}
\itemsep=3pt
\item the set of vertices of $\Gamma$ is $\Z$; every edge of $\Gamma$ has the form $(s,t)$ with $s<t$;
      the number of edges of $\Gamma$ is finite;
\item for each $j=1,\ldots,q$, there is exactly one edge of $\Gamma$
      beginning at $a_j$;
\item for each $j=1,\ldots,q$, there is exactly one edge of $\Gamma$
      ending at $b_j$;
\item for each integer $r$ distinct from $a_1,\ldots,a_q,b_1,\ldots,b_q$,
%       either there is no edges of $\Gamma$ ending at $r$ and beginning at $r$
%       or there is exactly one edge of $\Gamma$ ending at $r$ and
%       exactly one edge
%       %of $\Gamma$
%       beginning at $r$. In the latter case, $r$ is called a transit point
%       of $\Gamma$.
       either no edge of $\Gamma$ ends or begins at $r$
       or exactly one edge of $\Gamma$ ends at $r$ and
       exactly one edge of $\Gamma$ begins at $r$.
       In the latter case, $r$ is called a transit point
       of $\Gamma$.
\end{enumerate}}
\end{definition}
\noindent
%In this definition and in what follows, we call $s$ the {\it beginning} and $t$ the {\it end}
%of $(s,t)$ or say that $(s,t)$ {\it begins} at $s$ and {\it ends} at $t$.
For $q=0$, the only possible flow is the {\it empty flow} $\Gamma_\emptyset$, which has no edges.

We denote by $S_q$ the symmetric group thought of as the group of all bijections
of $\{1,\ldots,q\}$. We assume that the elements of $S_q$ act on this set
on the left and multiply them accordingly. We also identify $S_q$
with a subgroup of $S_{q+1}$ assuming that $\sigma(q+1)=q+1$ for any $\sigma\in S_q$.

We denote the set of all flows $\Gamma$ with sources $a_1,\ldots,a_q\le i$
and sinks $b_1,{\ldots},b_q{>}i$ having no transit point greater than $i$ by
$\F_i(a_1,{\ldots},a_q;b_1,{\ldots},b_q)$\label{F}. Clearly, $\Gamma$ decomposes
into connected components. If there is a component of $\Gamma$
%beginning at $a_j$ and ending at $b_k$,
containing $a_j$ and $b_k$,
then we say that source $a_j$ and sink $b_k$
are linked in $\Gamma$. Throughout this section, we suppose
for definiteness that $a_1<a_2<\cdots<a_q$ and
$b_q<b_{q-1}<\cdots<b_1$. Let $\sigma$ denote the permutation
%of the indices $1,\ldots,q$
in $S_q$ such that
$a_j$ and $b_{{\sigma(j)}}$ are linked in $\Gamma$.
%We define the {\it sign of $\Gamma$ with respect to $i$} as follows:
We shall occasionally call $\sigma$ the {\it linking permutation} of
$\Gamma$.
We define the {\it $i$-sign of $\Gamma$} to be
\begin{equation}\label{equation:lcwman:0}
%\sgn_i(\Gamma)=\sgn\sigma\cdot(-1)^{\text{number of transit points of }\Gamma+\sum_{j=1}^q(b_j-i)}.
\sgn_i(\Gamma):=\sgn\sigma\cdot(-1)^{\sum_{j=1}^q(b_j-i)+\text{number of transit points of }\Gamma}.
\end{equation}

{\it Example.} Let $\Gamma$ have the following edges
$(5,9)$, $(3,8)$, $(6,7)$, $(4,6)$, $(2,4)$, $(1,3)$.
Then
%$\Gamma\in\F_6(1,2,5;7,8,9)$
$\Gamma\in\F_6(1,2,5;9,8,7)$
and has transit points $3,4,6$.
Its linking permutation is $\sigma$ with
$\sigma(1)=2$, $\sigma(2)=3$ and $\sigma(3)=1$.
Therefore, its $6$-sign is $-1$.

We define the
%following
%functions
operators $L_1,L_2,L_3,M_1,M_2,R$
on some flows $\Gamma$ of\linebreak
$\F_i(a_1,\ldots,a_q;b_1,\ldots,b_q)$ as follows.

\bigskip

\begin{tabular}{|r|l|l|}
\hline
%\multicolumn{3}{c}{Graph & Condition & Operation}\\
\multicolumn{1}{|c|}{Graph} & \multicolumn{1}{c|}{Condition} & \multicolumn{1}{c|}{Operation}\\
\hhline{|=|=|=|}
%\hline
$L_1(\Gamma)$ & $q\ge1$,                                 & replacement of $(s,a_q{-}1)$ \\
              & $a_q{-}1$ is a transit point of $\Gamma$ & with $(s,a_q)$               \\
\hline
$L_2(\Gamma)$ & $q\ge1$,                                 & addition of $(a_q{-}1,a_q)$\\
              & no edge of $\Gamma$ begins at $a_q{-}1$  &                                \\
\hline
$L_3(\Gamma)$ & $q\ge1$,                                 & replacement of $(a_q,t)$   \\
              & $a_q{-}1$ is no source of $\Gamma$       & with $(a_q-1,t)$           \\
\hhline{|=|=|=|}
%\hline\hline
$M_1(\Gamma)$ & $i+1$ is no sink of $\Gamma$,                      & replacement of $(s,i)$ \\
              & $i$ is a transit point of $\Gamma$       & with $(s,i+1)$         \\
\hline
$M_2(\Gamma)$ & $i+1$ is no sink of $\Gamma$,                                & addition of $(i,i+1)$  \\
              & no edge of $\Gamma$ begins at $i$                &                        \\
\hhline{|=|=|=|}
%\hline\hline
$R(\Gamma)$  & $q\ge1$                                           & replacement of $(s,b_q)$ \\
             & $b_q+1$ is no sink of $\Gamma$                    & with $(s,b_q+1)$         \\
\hline
\multicolumn{3}{c}{\footnotesize Table 1}
\end{tabular}

\bigskip

In each row of this table, the graph in the left column is obtained from $\Gamma$ by the operation
described in the right column (here $s$ and $t$ can be chosen uniquely) if $\Gamma$ satisfies the condition in the middle column.

{\it Example.} Let $\Gamma$ be as in the previous example.
Then $L_1(\Gamma)$ has edges $(5,9)$, $(3,8)$, $(6,7)$, $(4,6)$, $(2,5)$, $(1,3)$;
$L_2(\Gamma)$ is not well-defined, since $a_q-1=a_3-1=4$ is a transit point of $\Gamma$;
$L_3(\Gamma)$ has edges $(4,9)$, $(3,8)$, $(6,7)$, $(4,6)$, $(2,4)$, $(1,3)$ and is no flow
(see Lemma~\ref{lemma:L3}).

%Note also that $s$ and $t$ in the right column can be chosen uniquely, since by
%Definition~\ref{definition:lcwman:1} no two edges of $\Gamma$ begin or end at the same point.

\begin{lemma}\label{lemma:L1}
If well-defined, $L_1(\Gamma)$ belongs to
$\F_i(a_1,{\ldots},a_{q-1},a_q-1;$\linebreak
$b_1,{\ldots},b_q)$
and has $i$-sign opposite to that of $\Gamma$.
\end{lemma}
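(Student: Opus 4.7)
The plan is to establish the lemma in two stages: first confirm that $L_1(\Gamma)$ is indeed a flow with the stated sources, sinks, and transit points (all of them $\le i$), then compare the $i$-signs of $\Gamma$ and $L_1(\Gamma)$ term by term in formula~(\ref{equation:lcwman:0}).

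For the structural part, I will concentrate on the handful of vertices whose local picture is affected. By the well-definedness hypothesis, the transit point $a_q{-}1$ of $\Gamma$ carries a unique incoming edge $(s,a_q{-}1)$ and a unique outgoing edge $(a_q{-}1,t')$, while the source $a_q$ carries a unique outgoing edge $(a_q,u)$. Replacing $(s,a_q{-}1)$ by $(s,a_q)$ preserves the in/out degrees at $s$ and at $t'$, turns $a_q{-}1$ into a vertex with no incoming edge and one outgoing edge, and turns $a_q$ into a vertex with one incoming and one outgoing edge. Hence $a_q{-}1$ qualifies as a new source and $a_q$ as a transit point. The requirement $s<a_q$ follows from $s<a_q{-}1$, and the ordering $a_{q-1}<a_q{-}1$ holds because a source cannot coincide with a transit point, so $a_{q-1}\ne a_q{-}1$. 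All transit points of $L_1(\Gamma)$ still satisfy the bound $\le i$ because the old ones did and $a_q\le i$, placing $L_1(\Gamma)$ in $\F_i(a_1,\ldots,a_{q-1},a_q{-}1;b_1,\ldots,b_q)$.

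For the sign, I will follow the connected component of $\Gamma$ that passes through $a_q{-}1$. Since $a_q{-}1$ is a transit point, this component begins at some source $a_k$ and ends at the linked sink $b_{\sigma(k)}$, running locally as $a_k\to\cdots\to s\to a_q{-}1\to t'\to\cdots\to b_{\sigma(k)}$. The key observation is that $k\ne q$: because every edge is strictly increasing, the component starting at $a_q$ stays in the region $\ge a_q$, so it cannot visit $a_q{-}1$. After applying $L_1$, the head of the old $a_k$-component is rerouted through the new edge $(s,a_q)$ into the tail of the old $a_q$-component and terminates at $b_{\sigma(q)}$, while the new source $a_q{-}1$ inherits the tail leading to $b_{\sigma(k)}$. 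Consequently the new linking permutation satisfies $\sigma'=\sigma\circ(k\;q)$, giving $\sgn\sigma'=-\sgn\sigma$. Since $a_q{-}1$ loses its transit-point status while $a_q$ gains it, the total number of transit points is unchanged, and the sinks are untouched, so formula~(\ref{equation:lcwman:0}) yields $\sgn_i(L_1(\Gamma))=-\sgn_i(\Gamma)$.

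The only non-routine step is the verification that $k\ne q$, and this is the place where the strict monotonicity of edges in Definition~\ref{definition:lcwman:1}(i) does the real work; the rest amounts to a local degree count and a bookkeeping of how two vertices swap roles under the operation.
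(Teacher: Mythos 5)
Your proof is correct and follows essentially the same route as the paper's: show that $a_q{-}1$ and $a_q$ swap the roles of source and transit point (so sinks and the transit-point count are unchanged), then track the connected component through $a_q{-}1$ to see the linking permutation is composed with the transposition $(k\,q)$, flipping its sign. You spell out two details the paper leaves implicit — the local degree count and the reason $k\ne q$ (monotonicity of edges keeps the $a_q$-component in the region $\ge a_q$) — but the argument is the same.
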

\begin{proof} In $L_1(\Gamma)$, $a_q$ becomes a transit point
and $a_q-1$ becomes a source. Therefore, $L_1(\Gamma)$ and $\Gamma$
have the same sinks and number of transit points.

%Let $\sigma$ be the permutation of $S_q$ such that
%$a_j$ and $b_{{\sigma(j)}}$ are linked in $\Gamma$
%for any $j=1,\ldots,q$.

Let $\sigma$ be the linking permutation of $\Gamma$ and
$a_l$ be the source belonging to the connected component
of $\Gamma$ containing $a_q-1$. Clearly, $l<q$.

We set $a'_j:=a_j$ if $j=1,\ldots,q-1$ and $a'_q:=a_q-1$.
Then $a'_1,\ldots,a'_q$ are the sources of $L_1(\Gamma)$
arranged in the strictly ascending order.
We set $\tau:=\sigma\circ(l,q)$. One can easily see that
$a'_j$ and $b_{{\tau(j)}}$ are linked in $L_1(\Gamma)$
for any $j=1,\ldots,q$, i.e. $\tau$ is
the linking permutation of $L_1(\Gamma)$.
\end{proof}

\begin{lemma}\label{lemma:L2}
If well-defined, $L_2(\Gamma)$ belongs to
$\F_i(a_1,{\ldots},a_{q-1},a_q-1;$\linebreak
$b_1,{\ldots},b_q)$
and has $i$-sign opposite to that of $\Gamma$.
\end{lemma}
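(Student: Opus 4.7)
The plan is to follow the pattern of the proof of Lemma~\ref{lemma:L1}, but the linking permutation will now be unchanged, and the sign flip will come entirely from the parity of the number of transit points. First I would check that $L_2(\Gamma)$ is a flow with the claimed sources and sinks. The crucial remark is that $a_q-1<a_q\le i$, so $a_q-1$ is strictly less than $i$ and hence cannot equal any sink $b_j>i$. Also, since no edge of $\Gamma$ begins at $a_q-1$, the vertex $a_q-1$ cannot be a source of $\Gamma$, because every source has a unique outgoing edge. Consequently $a_q-1$ is either a transit point or an isolated vertex of $\Gamma$; the transit-point option is excluded by the hypothesis (a transit point has an outgoing edge). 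Therefore $a_q-1$ is isolated in $\Gamma$.

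Next I would track the effect of inserting the edge $(a_q-1,a_q)$. The vertex $a_q-1$ acquires one outgoing edge and no incoming edge, so it becomes a source. The vertex $a_q$, which was a source of $\Gamma$, now has one incoming edge (the new one) in addition to its one outgoing edge, so it becomes a transit point. No other vertex changes its status. The new list of sources in strictly ascending order is $a_1<\cdots<a_{q-1}<a_q-1$, where the last strict inequality uses $a_{q-1}<a_q$ combined with $a_q-1\ne a_{q-1}$ (the latter was established in the previous step). This gives $L_2(\Gamma)\in\F_i(a_1,\ldots,a_{q-1},a_q-1;b_1,\ldots,b_q)$.

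For the sign, I would compare the three ingredients of formula~(\ref{equation:lcwman:0}) one by one. The multiset of sinks is unchanged, so $\sum_{j=1}^q(b_j-i)$ is unchanged. The number of transit points increases by exactly $1$, the new transit point being $a_q$. Finally, with the new source labeling $a'_q:=a_q-1$, the new source is connected through the added edge directly to $a_q$ and from there through the same path as before in $\Gamma$ to $b_{\sigma(q)}$; the other sources $a_1,\ldots,a_{q-1}$ retain their linkings. Hence the linking permutation of $L_2(\Gamma)$ coincides with $\sigma$, so $\sgn\sigma$ is unchanged. Combining these three observations yields $\sgn_i(L_2(\Gamma))=-\sgn_i(\Gamma)$.

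The main (minor) obstacle is the case analysis needed to locate $a_q-1$ inside $\Gamma$ and to justify the strict ordering $a_{q-1}<a_q-1$; once this is in place, the sign calculation is cleaner than in Lemma~\ref{lemma:L1}, since the sign flip comes exclusively from the single new transit point rather than from a transposition of the linking permutation.
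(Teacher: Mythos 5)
Your proof is correct and follows the same route as the paper's, merely spelling out details (that $a_q-1$ is isolated in $\Gamma$, the source ordering, and the three sign ingredients) that the paper treats as immediate. No genuine difference in approach.
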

\begin{proof} In $L_2(\Gamma)$, $a_q$ becomes a transit point
and $a_q-1$ becomes a source. Therefore, $L_2(\Gamma)$ has
the same sinks as $\Gamma$ and one more transit point than $\Gamma$.
Finally notice that $L_2(\Gamma)$ and $\Gamma$ have the same
linking permutations.
\end{proof}

\begin{lemma}\label{lemma:L3}
Suppose that $L_3(\Gamma)$ is
well-defined. Then $L_3(\Gamma)$ is a flow if and only if
$a_q-1$ is no transit point of $\Gamma$.
If $L_3(\Gamma)$ is a flow, then it %$L_3(\Gamma)$
belongs to %\linebreak
$\F_i(a_1,{\ldots},a_{q-1};a_q{-}1,b_1,{\ldots},b_q)$
and has the same $i$-sign as $\Gamma$.
If $L_3(\Gamma)$ is no flow, then there is exactly one flow
$\Gamma'$ of $\F_i(a_1,\ldots,a_q;b_1,\ldots,b_q)$
distinct from $\Gamma$ such that $L_3(\Gamma)=L_3(\Gamma')$.
%We have $\sgn_i(\Gamma)=-\sgn_i(\Gamma')$.
Moreover, $\sgn_i(\Gamma)=-\sgn_i(\Gamma')$.
\end{lemma}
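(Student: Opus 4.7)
The plan is to analyze the local effect of $L_3$ at the three vertices it touches: the source $a_q$, the neighbor $a_q-1$, and the target $t$ of the unique outgoing edge $(a_q,t)$ of $\Gamma$ at $a_q$. After $L_3$, the vertex $a_q$ has no incident edges at all, while $t$ keeps exactly one incoming edge (whose tail has merely changed from $a_q$ to $a_q-1$). The only delicate vertex is $a_q-1$: by hypothesis it is no source of $\Gamma$, so in $\Gamma$ it is either isolated or a transit point. In the isolated case, the new edge $(a_q-1,t)$ turns it into a source and all of Definition~\ref{definition:lcwman:1} is satisfied; in the transit-point case $a_q-1$ already has a unique outgoing edge in $\Gamma$, so adding $(a_q-1,t)$ gives it two outgoing edges, violating the flow axiom. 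This proves the first equivalence.

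Suppose first that $L_3(\Gamma)$ is a flow. Its sources are $a_1,\ldots,a_{q-1},a_q-1$ (in strictly ascending order, since the hypothesis rules out $a_{q-1}=a_q-1$) and its sinks are unchanged. I would verify next that no transit point is created or destroyed: $a_q$ stops contributing (it was a source, now isolated) and $a_q-1$ does not contribute either before or after (isolated, then a source); every other vertex is unaffected. Tracing connected components, the component of $\Gamma$ that contained $a_q$ and ended at $b_{\sigma(q)}$ becomes, in $L_3(\Gamma)$, the component containing $a_q-1$, still ending at $b_{\sigma(q)}$; every other component is intact. Since $a_q-1$ is the largest of the new sources, the linking permutation is again $\sigma$, and $\sgn_i(L_3(\Gamma))=\sgn_i(\Gamma)$ follows at once from~(\ref{equation:lcwman:0}).

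Now assume $L_3(\Gamma)$ is not a flow, so $a_q-1$ is a transit point of $\Gamma$ with (unique) incoming edge $(s,a_q-1)$ and outgoing edge $(a_q-1,t')$; note that $t\ne t'$, since otherwise $t$ would already have two incoming edges in $\Gamma$. I define $\Gamma'$ as the graph obtained from $\Gamma$ by swapping the heads of the outgoing edges at $a_q$ and $a_q-1$, i.e., replacing $(a_q,t)$ and $(a_q-1,t')$ by $(a_q,t')$ and $(a_q-1,t)$. A routine check confirms $\Gamma'\in\F_i(a_1,\ldots,a_q;b_1,\ldots,b_q)$ ($a_q$ is still a source, $a_q-1$ is still a transit point, all other local counts agree) and that $L_3(\Gamma)=L_3(\Gamma')$, as both coincide with the common graph having no edges at $a_q$ and the three edges $(s,a_q-1)$, $(a_q-1,t)$, $(a_q-1,t')$ at $a_q-1$. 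Uniqueness is then immediate: any flow with the same $L_3$-image is determined by the choice of which of the two outgoing edges at $a_q-1$ in $L_3(\Gamma)$ was produced by the $L_3$-operation, leaving exactly the two possibilities $\Gamma$ and $\Gamma'$.

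For the sign comparison, let $a_l$ be the source whose component in $\Gamma$ contains $a_q-1$; necessarily $l<q$, since $a_q-1$ is a transit point. In $\Gamma'$, the forward path from $a_l$ traverses $(s,a_q-1)$ and then $(a_q-1,t)$, after which it follows the same downstream edges that used to leave $t$ in $\Gamma$, terminating at $b_{\sigma(q)}$; symmetrically, the path from $a_q$ in $\Gamma'$ uses $(a_q,t')$ and ends at $b_{\sigma(l)}$. Hence the linking permutation of $\Gamma'$ is $\sigma\circ(l,q)$, which flips the sign of the permutation while the sinks and number of transit points are preserved; this gives $\sgn_i(\Gamma')=-\sgn_i(\Gamma)$. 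The main obstacle throughout is the careful bookkeeping of in/out-degrees and component structure at the three affected vertices; once the local picture is established, every sign computation reduces to the parity of a single transposition.
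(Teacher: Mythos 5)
Your proof is correct and follows the same strategy as the paper: analyze the local degrees at $a_q$, $a_q{-}1$, and the head $t$ of the edge being moved; in the flow case track that only the bottom source changes so the linking permutation, sinks and transit-point count are preserved; in the non-flow case define $\Gamma'$ by swapping the heads of the outgoing edges at $a_q$ and $a_q{-}1$, after which the linking permutations differ by the transposition $(l,q)$. You spell out a few details the paper dispatches with ``straightforward'' or ``similarly to Lemma~\ref{lemma:L2}'' --- the dichotomy between isolated and transit at $a_q{-}1$, the observation that $t\neq t'$, and the explicit two-preimage uniqueness argument --- but the decomposition, the construction of $\Gamma'$, and the sign computation coincide with the paper's.
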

\begin{proof} If $a_q-1$ is no transit point, then
$L_3(\Gamma)$ is a flow with the same linking permutation,
sinks and number of transit points. Hence
$\sgn_i(L_3(\Gamma))=\sgn_i(\Gamma)$.

Now consider the case where $a_q-1$ is a transit point of $\Gamma$.
Then there is an edge of $\Gamma$ of the form $(a_q-1,r)$.
On the other hand, there is also an edge of $\Gamma$ of the
form $(a_q,t)$, since $a_q$ is a source of $\Gamma$.
In $L_3(\Gamma)$, $(a_q,t)$ turns to $(a_q-1,t)$, whence
$L_3(\Gamma)$ has two edges $(a_q-1,t)$ and $(a_q-1,r)$
beginning at $a_q-1$ and $L_3(\Gamma)$ is no flow.

Consider the flow $\Gamma'$ that is obtained from $\Gamma$
by the removing the edges $(a_q-1,r)$ and $(a_q,t)$
and adding the edges $(a_q,r)$ and $(a_q-1,t)$.
It is straightforward to see that $\Gamma'$ is the required flow.
To calculate the $i$-sign of $\Gamma'$, denote by $l$
the integer such that $a_l$ belongs to the connected
component of $\Gamma$ containing $r$ and denote by
$\sigma$ the linking permutation of $\Gamma$. Clearly, $l<q$.
Similarly to Lemma~\ref{lemma:L2}, one can see that
$\sigma\circ(l,q)$ is the linking permutation of $\Gamma'$.
Moreover, $\Gamma$ and $\Gamma'$ have the same transit points
and sinks.
\end{proof}

\begin{lemma}\label{lemma:M1}
If well-defined, $M_1(\Gamma)$ belongs to
$\F_i(a_1,{\ldots},a_q,i;
b_1,{\ldots},b_q,i{+}1)$
and has $i$-sign opposite to that of $\Gamma$.
\end{lemma}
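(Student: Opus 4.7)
The plan is to mirror the strategy used in Lemmas~\ref{lemma:L1} and~\ref{lemma:L2}: first verify that $M_1(\Gamma)$ is a flow with the announced sources and sinks, then track the contributions to the $i$-sign formula~(\ref{equation:lcwman:0}) separately---the linking permutation, the number of transit points, and $\sum_j(b_j-i)$---and combine them.

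First I would pin down what $M_1$ does locally. Let $(s,i)$ be the unique edge of $\Gamma$ ending at $i$ and $(i,t)$ the unique edge beginning at $i$; both exist since $i$ is transit in $\Gamma$. In $M_1(\Gamma)$, the edge $(s,i)$ is replaced by $(s,i+1)$, so $i$ loses its incoming edge but keeps the outgoing $(i,t)$, and hence $i$ becomes a source. Since $i$ is transit in $\Gamma$, $i\notin\{a_1,\ldots,a_q\}$, and together with $a_q\le i$ this forces $a_q<i$, so the extended list $a_1<\cdots<a_q<i$ is strictly increasing. Concerning $i+1$: the hypothesis excludes it from being a sink of $\Gamma$, the inequality $i+1>i\ge a_q$ excludes it from being a source, and if $i+1$ were a transit point of $\Gamma$ the new edge $(s,i+1)$ would make two edges end at $i+1$, so $M_1(\Gamma)$ being a flow forces $i+1$ to carry no edge of $\Gamma$. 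Consequently in $M_1(\Gamma)$ the vertex $i+1$ has only the incoming edge $(s,i+1)$ and so is a sink, with $i+1<b_q<\cdots<b_1$. All other transit points of $\Gamma$ remain transit, none exceeding $i$, confirming $M_1(\Gamma)\in\F_i(a_1,\ldots,a_q,i;b_1,\ldots,b_q,i{+}1)$.

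Next I would determine the linking permutation $\tau\in S_{q+1}$ of $M_1(\Gamma)$. Let $l$ be the index such that the connected component of $\Gamma$ containing $i$ also contains $a_l$; then this component runs $a_l\to\cdots\to s\to i\to t\to\cdots\to b_{\sigma(l)}$. After $M_1$, the component splits into two paths, $a_l\to\cdots\to s\to i+1$ and $i\to t\to\cdots\to b_{\sigma(l)}$. In the new source list $(a_1,\ldots,a_q,i)$ the source $i$ occupies position $q+1$, and in the sink list $(b_1,\ldots,b_q,i+1)$ the sink $i+1$ occupies position $q+1$. Hence $\tau(l)=q+1$, $\tau(q+1)=\sigma(l)$, and $\tau(j)=\sigma(j)$ for the remaining indices; that is, $\tau=\sigma\circ(l,q+1)$, so $\sgn\tau=-\sgn\sigma$.

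Finally I would combine this with the change in the other factor of~(\ref{equation:lcwman:0}). Appending $i+1$ to the list of sinks increases $\sum_j(b_j-i)$ by exactly $1$, while removing $i$ from the transit points decreases their number by exactly $1$; the two changes cancel in the exponent of $-1$. Thus the only net contribution is the sign flip $\sgn\tau=-\sgn\sigma$, yielding $\sgn_i(M_1(\Gamma))=-\sgn_i(\Gamma)$. The delicate step I expect to need the most care is establishing that $i+1$ carries no edge of $\Gamma$, which is handled through the case analysis above using the fact that $M_1(\Gamma)$ is well-defined as a flow.
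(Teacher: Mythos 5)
Your argument is correct in its conclusion and follows the same route as the paper: identify $i$ as a new source and $i+1$ as a new sink, observe that the new sink $i+1$ contributes $+1$ to $\sum_j(b_j-i)$ while the loss of $i$ as a transit point contributes $-1$ to the transit-point count (so the parity factor in~(\ref{equation:lcwman:0}) is unchanged), and show the linking permutation becomes $\sigma\circ(l,q+1)$, flipping the sign.

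There is one small logical slip. In showing that $i+1$ carries no edge of $\Gamma$, you rule out the case that $i+1$ is a transit point of $\Gamma$ by appealing to ``$M_1(\Gamma)$ being a flow'' — but that $M_1(\Gamma)$ is a flow is part of the very thing the lemma is asserting (the hypothesis ``if well-defined'' only means the conditions in Table~1 hold, i.e.\ that the replacement can be performed). The correct and simpler justification is immediate from the definition of $\F_i(a_1,\ldots,a_q;b_1,\ldots,b_q)$: flows in this set have no transit point greater than $i$, so $i+1$ cannot be a transit point of $\Gamma$. With that one repair, your proof coincides with the paper's (which takes all of these verifications as evident and records only the same two observations about the sign formula and the transposition $(l,q+1)$).
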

\begin{proof} For $M_1(\Gamma)$, $i$ is no longer a transit point
(but a source) and $i+1$ is a new sink. Therefore the second factor
of the right-hand side of~(\ref{equation:lcwman:0}) does not change.

Let $\sigma$ be the linking permutation of $\Gamma$ and
$a_l$ be the source belonging to the connected component
of $\Gamma$ containing $i$. Note that $a_1,{\ldots},a_q,i$
are all the sources of $M_1(\Gamma)$ written in the strictly
ascending order and $b_1,{\ldots},b_q,i+1$ are all the sinks
of $M_1(\Gamma)$ written in the strictly descending order.
Therefore, under the natural identification $S_q<S_{q+1}$, %(see the introduction),
we obtain that $\sigma\circ(l,q+1)$ is the linking permutation %\linebreak
of $M_1(\Gamma)$.
\end{proof}

\begin{lemma}\label{lemma:M2}
If well-defined, $M_2(\Gamma)$ belongs to
$\F_i(a_1,{\ldots},a_q,i;
b_1,{\ldots},b_q,i{+}1)$
and has $i$-sign opposite to that of $\Gamma$.
\end{lemma}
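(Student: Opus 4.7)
The plan is to follow the pattern of Lemma~\ref{lemma:M1}, tracking carefully where $M_2$ differs from $M_1$. First I would observe that the two defining conditions for $M_2$, combined with $\Gamma\in\F_i(a_1,\ldots,a_q;b_1,\ldots,b_q)$, force both vertices $i$ and $i{+}1$ to be isolated in $\Gamma$. Indeed, $i$ is not a sink of $\Gamma$ (all sinks of $\Gamma$ exceed $i$) and by hypothesis no edge of $\Gamma$ begins at $i$, so $i$ can touch no edge; similarly, $i{+}1$ is not a source of $\Gamma$ (all sources are $\le i$), not a transit point of $\Gamma$ (none exceed $i$), and by hypothesis not a sink. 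It follows in particular that $a_q<i$ and $b_q>i{+}1$, so in $M_2(\Gamma)$ the sources in ascending order are $a_1,\ldots,a_q,i$ and the sinks in descending order are $b_1,\ldots,b_q,i{+}1$.

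Next I would verify that adjoining the edge $(i,i{+}1)$ turns $i$ into a new source and $i{+}1$ into a new sink while introducing no new transit point, so that $M_2(\Gamma)$ does indeed belong to $\F_i(a_1,\ldots,a_q,i;b_1,\ldots,b_q,i{+}1)$. The final and main step is comparing the two $i$-signs via formula~(\ref{equation:lcwman:0}). The added edge $(i,i{+}1)$ is an isolated connected component of $M_2(\Gamma)$ linking the new largest source $a_{q+1}=i$ to the new smallest sink $b_{q+1}=i{+}1$, so under the identification $S_q<S_{q+1}$ the linking permutation of $M_2(\Gamma)$ coincides with that of $\Gamma$; hence the first factor in~(\ref{equation:lcwman:0}) is unchanged. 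The number of transit points is also unchanged, whereas the sum in the exponent gains the single extra term $(i{+}1)-i=1$. This lone additional $-1$ yields $\sgn_i(M_2(\Gamma))=-\sgn_i(\Gamma)$, as required.

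The only real subtlety is the initial isolation argument for $i$ and $i{+}1$; once it is in place, the remainder is a straightforward bookkeeping exercise parallel to the treatment of $M_1$, in fact slightly simpler because here the linking permutation picks up a fixed point rather than being multiplied by a transposition.
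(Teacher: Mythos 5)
Your proof is correct and follows essentially the same route as the paper's: observe that $a_1,\ldots,a_q,i$ are the sources of $M_2(\Gamma)$ in ascending order and $b_1,\ldots,b_q,i{+}1$ its sinks in descending order, that the linking permutation is unchanged under $S_q<S_{q+1}$, that the transit points are unchanged, and that the extra sink $i{+}1$ contributes a single $-1$ in~(\ref{equation:lcwman:0}). Your preliminary observation that the well-definedness conditions (together with $a_q\le i<b_q$ and the restriction on transit points) force $i$ and $i{+}1$ to be isolated, hence $a_q<i<i{+}1<b_q$, is a worthwhile bit of explicitness that the paper leaves implicit.
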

\begin{proof} Note that $a_1,{\ldots},a_q,i$
are all the sources of $M_2(\Gamma)$ written in the strictly
ascending order and $b_1,{\ldots},b_q,i+1$ are all the sinks
of $M_2(\Gamma)$ written in the strictly descending order.
Hence the linking permutation of $M_2(\Gamma)$ is the same
as that of $\Gamma$ under the natural identification $S_q<S_{q+1}$.
Moreover, $M_2(\Gamma)$ has the same transit points as $\Gamma$.
\end{proof}

\begin{lemma}\label{lemma:R}
If well-defined, $R(\Gamma)$ belongs to
$\F_i(a_1,{\ldots},a_q;
b_1,{\ldots},b_{q-1},b_q{+}1)$
and has $i$-sign opposite to that of $\Gamma$.
\end{lemma}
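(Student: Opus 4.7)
The plan is to show that $R(\Gamma)$ is a legitimate member of $\F_i(a_1,\ldots,a_q; b_1,\ldots,b_{q-1},b_q{+}1)$ and then to track exactly which term of~(\ref{equation:lcwman:0}) changes sign under the operation.

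First I would analyze the role of the vertex $b_q+1$ in the original flow $\Gamma$. By hypothesis $b_q+1$ is not a sink of $\Gamma$. It cannot be a source either, because every source satisfies $a_j\le i<b_q<b_q+1$. Finally, by the definition of $\F_i$, no transit point of $\Gamma$ exceeds $i$, so $b_q+1>i$ rules out $b_q+1$ being a transit point. Hence no edge of $\Gamma$ is incident to $b_q+1$, and the operation $R$ simply redirects the unique edge $(s,b_q)$ to $(s,b_q+1)$.

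Next I would verify that $R(\Gamma)$ is a flow in the claimed set. The sources are unchanged. The vertex $b_q$ loses its only incident edge and becomes neither a source, sink, nor transit point. The vertex $b_q+1$ acquires exactly one ending edge and no beginning edge, so it becomes a sink. Because $b_q+1$ is not a sink of $\Gamma$ and $b_q<b_{q-1}$, we have $b_q+1<b_{q-1}$, so the sinks written in descending order are indeed $b_1,\ldots,b_{q-1},b_q+1$. The transit points of $R(\Gamma)$ coincide (as a set) with those of $\Gamma$, and in particular remain $\le i$.

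Finally I would compute $\sgn_i$. The connected component containing $b_q$ in $\Gamma$ is in bijection with the connected component containing $b_q+1$ in $R(\Gamma)$, differing only in that its endpoint is renamed; since both $b_q$ and $b_q+1$ occupy the $q$-th position in the descending list of sinks, the linking permutation is preserved. Thus $\sgn\sigma$ does not change, and neither does the number of transit points. The only term of~(\ref{equation:lcwman:0}) that moves is the sink sum, which increases by exactly $1$ (from $b_q-i$ to $b_q+1-i$). Therefore $\sgn_i(R(\Gamma))=-\sgn_i(\Gamma)$. No step looks like a real obstacle; the only point requiring care is confirming the no-edge status of $b_q+1$ in $\Gamma$, which is precisely where the ``no transit point greater than $i$'' clause of Definition~\ref{definition:lcwman:1} intervenes.
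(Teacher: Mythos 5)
Your proof is correct and follows the same approach as the paper's (which is considerably terser — it simply notes the transit points, sources and linking permutation are preserved while the sinks become $b_1,\ldots,b_{q-1},b_q{+}1$, leaving the sink-sum increment implicit). You spell out the auxiliary fact that $b_q+1$ is not incident to any edge of $\Gamma$, which the paper leaves unstated but is indeed what makes $R(\Gamma)$ a flow.
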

\begin{proof} Note that $R(\Gamma)$ has
the same transit points and sources as $\Gamma$.
Moreover, $b_1,{\ldots},b_{q-1},b_q+1$ are all the sinks
of $M_2(\Gamma)$ written in the strictly descending order.
In particular, $R(\Gamma)$ and $\Gamma$ have the same
linking permutation.
\end{proof}

Now we are going to define ``inverse maps'' for each group of operators
$L_1$, $L_2$, $L_3$; $M_1$, $M_2$; $R$.

Suppose that $q{\ge}1$, $a_q{-}1\notin\{a_1,\ldots,a_{q-1}\}$ and
$\Gamma'\in\F_i(a_1,\ldots,a_{q-1},a_q{-}1;$\linebreak
$b_1,\ldots,b_q)$.
%Then we use the table below.
We define the operator $\L^o(\Gamma')$ that is one of the operators $L_1$, $L_2$, $L_3$ described in Table~1
and the graph $\L^g(\Gamma')$ using the table below.

\bigskip

\begin{tabular}{|l|c|l|}
\hline
\multicolumn{1}{|c|}{Condition} & \multicolumn{1}{c|}{}  & \multicolumn{1}{c|}{Value}\\
\hhline{|=|=|=|}
%\hline\hline
\multicolumn{1}{|l|}{there is an edge $(s,a_q)$ of $\Gamma'$}   &\multicolumn{1}{c|}{$\L^o(\Gamma')$}&\multicolumn{1}{c|}{$L_1$}\\
\cline{2-3}
 with $s<a_q-1$                            &$\L^g(\Gamma')$& obtained from $\Gamma'$ by replacing \\
                                           &               & $(s,a_q)$ with $(s,a_q-1)$\\
\hhline{|=|=|=|}
%\hline\hline
\multicolumn{1}{|l|}{$(a_q-1,a_q)$ is an edge of $\Gamma'$} &\multicolumn{1}{c|}{$\L^o(\Gamma')$}&\multicolumn{1}{c|}{$L_2$} \\
\cline{2-3}
                                                            &        $\L^g(\Gamma')$             &  obtained from $\Gamma'$ by\\
                                                            &                                    &  removing $(a_q-1,a_q)$\\
\hhline{|=|=|=|}
%\hline\hline
\multicolumn{1}{|l|}{$a_q$ is no transit point of $\Gamma'$} &\multicolumn{1}{c|}{$\L^o(\Gamma')$}&\multicolumn{1}{c|}{$L_3$} \\
\cline{2-3}
                                                            &        $\L^g(\Gamma')$             &  obtained from $\Gamma'$ by replacing \\
                                                            &                                    &  $(a_q-1,t)$ with $(a_q,t)$\\
\hline
\multicolumn{3}{c}{\footnotesize Table 2}
\end{tabular}

\bigskip
%In each row of this table, the graph $\L^g(\Gamma')$ or the operator $\L^o(\Gamma')$ in the middle column
%is obtained from $\Gamma'$ by the operation in the right column or equals the operator in the right column respectively
%if $\Gamma'$ satisfies the condition in the left column.
Let us take any row of this table such that the condition in its left column is satisfied.
If the middle column of this row contains $\L^g(\Gamma')$, then
%the graph $\L^g(\Gamma')$
this graph
is obtained from the flow $\Gamma'$ by the operation described in the right column. If the middle column
contains $\L^o(\Gamma')$, then
%the operator $\L^o(\Gamma')$
this operator
equals the operator in the right column.

{\it Example.} Let $q=2$, $i=3$, $a_1=1$, $a_2=3$, $b_2=4$, $b_1=5$ and $\Gamma'$ has edges
$(2,5)$, $(3,4)$, $(1,3)$. Then $\L^o(\Gamma')=L_1$ and $\L^g(\Gamma')$ has edges
$(2,5)$, $(3,4)$, $(1,2)$ (two top rows of Table~2 were applied). If we apply $\L^o(\Gamma')$
to $\L^g(\Gamma')$, then we obtain $\Gamma'$ (see the definition of $L_1$ in Table~1).

\begin{lemma}\label{lemma:lcwman:1}
Suppose that $q{\ge}1$ and $a_q{-}1\notin\{a_1,{\ldots},a_{q-1}\}$.
For any flow $\Gamma'\in %$\linebreak$
\F_i(a_1,{\ldots},a_{q-1},a_q{-}1;%$\linebreak
%$
b_1,\ldots,b_q)$, we have
$\L^g(\Gamma')\in\F_i(a_1,{\ldots},a_q;b_1,{\ldots},b_q)$ and
$\L^o(\Gamma')\bigl(\L^g(\Gamma')\bigr)=\Gamma'$.
For any flow $\Gamma\in\F_i(a_1,\ldots,a_q;b_1,\ldots,b_q)$ and
integer $k=1,2,3$, we have $\L^o(L_k(\Gamma))=L_k$ and
$\L^g(L_k(\Gamma))=\Gamma$ if $L_k(\Gamma)$ is well-defined and a flow.
\end{lemma}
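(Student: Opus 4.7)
The argument is a direct case analysis on which row of Table~2 is triggered by $\Gamma'$ (and, in the second part, on which operator $L_k$ is applied to $\Gamma$). The key preliminary observation is that for any $\Gamma'\in\F_i(a_1,\ldots,a_{q-1},a_q-1;b_1,\ldots,b_q)$, the vertex $a_q$ is neither a source nor a sink of $\Gamma'$ (since $a_q\notin\{a_1,\ldots,a_{q-1},a_q-1\}$ and $a_q\le i<b_q$). By property~(iv) of Definition~\ref{definition:lcwman:1}, $a_q$ is thus either isolated in $\Gamma'$ or a transit point. In the isolated case Row~3 fires; if $a_q$ is a transit point, exactly one edge $(s,a_q)$ ends at $a_q$, and either $s=a_q-1$ (Row~2) or $s<a_q-1$ (Row~1). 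These three cases are mutually exclusive and exhaustive, so $\L^o(\Gamma')$ and $\L^g(\Gamma')$ are well-defined.

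For each row, I then verify that $\L^g(\Gamma')\in\F_i(a_1,\ldots,a_q;b_1,\ldots,b_q)$ by tracking what happens at $a_q-1$ and $a_q$. In Row~1, the vertex $a_q-1$ (a source of $\Gamma'$ with a single outgoing edge) acquires the new incoming edge $(s,a_q-1)$ and becomes a transit point, while $a_q$ loses its incoming edge $(s,a_q)$ and, retaining its outgoing edge from the transit-point structure, becomes a source. In Row~2, $a_q-1$ loses its only edge and becomes isolated, while $a_q$ loses its only incoming edge and again becomes a source. In Row~3, $a_q-1$ loses its only outgoing edge and becomes isolated, while $a_q$, previously isolated, acquires the outgoing edge $(a_q,t)$ and becomes a source. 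All other vertices are unaffected, so $\L^g(\Gamma')$ has the required sources and sinks. Comparing the right column of Table~2 with the right column of Table~1 shows directly that applying $\L^o(\Gamma')$ to $\L^g(\Gamma')$ recovers $\Gamma'$.

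For the second assertion, fix $k\in\{1,2,3\}$ and suppose that $L_k(\Gamma)$ is a well-defined flow. It suffices to check that Row~$k$ of Table~2 is the one triggered by $\Gamma':=L_k(\Gamma)$, for then $\L^o(L_k(\Gamma))=L_k$ and $\L^g(L_k(\Gamma))=\Gamma$ follow by direct comparison. For $k=1$, the operation $L_1$ replaces $(s,a_q-1)$ by $(s,a_q)$ with $s<a_q-1$, so Row~1 applies; since $a_q$ becomes a transit point with unique incoming edge $(s,a_q)$, Row~2 cannot also apply. For $k=2$, the edge $(a_q-1,a_q)$ is added, triggering Row~2; Row~1 is excluded because $a_q$ is now a transit point with unique incoming edge $(a_q-1,a_q)$. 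For $k=3$, the hypothesis that $L_3(\Gamma)$ is a flow combined with Lemma~\ref{lemma:L3} forces $a_q-1$ to be neither a source nor a transit point of $\Gamma$; since $a_q-1\le i$ and $a_q-1\notin\{a_1,\ldots,a_{q-1}\}$, property~(iv) leaves $a_q-1$ with no edges in $\Gamma$, and consequently in $L_3(\Gamma)$ the vertex $a_q$ becomes isolated, so Row~3 (and only Row~3) applies. The only real obstacle is the bookkeeping needed for mutual exclusion and for tracking incidences at $a_q-1$ and $a_q$; once this is done carefully, all identities drop out.
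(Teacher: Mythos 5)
Your proof is correct and takes essentially the same approach as the paper: the paper's entire proof reads ``The result follows directly from Tables~1 and~2,'' and your argument simply spells out the direct case-by-case verification that this claim implicitly refers to. In particular, your opening observation that $a_q$ is neither a source nor a sink of $\Gamma'$ (hence by property (iv) of Definition~\ref{definition:lcwman:1} either isolated or a transit point) is precisely what makes the three rows of Table~2 mutually exclusive and exhaustive, and the rest of your incidence bookkeeping at $a_q-1$ and $a_q$ is exactly what one needs to check against Table~1.
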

\begin{proof}
The result follows directly from Tables~1 and 2.
\end{proof}

\begin{corollary}\label{corollary:lcwman:1}
Suppose that $q\ge1$ and $a_q-1\notin\{a_1,\ldots,a_{q-1}\}$.
For any flow $\Gamma'\in\F_i(a_1,{\ldots},a_{q-1},a_q-1;%$\linebreak
%$
b_1,\ldots,b_q)$, there exists a unique pair
$(O,\Gamma)\in %$ \linebreak$
\{L_1,L_2,L_3\}\times\F_i(a_1,\ldots,a_q;b_1,\ldots,b_q)$
such that $O(\Gamma)=\Gamma'$.
\end{corollary}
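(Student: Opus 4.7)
The plan is to read off both existence and uniqueness of the pair $(O,\Gamma)$ directly from Lemma~\ref{lemma:lcwman:1}, since the operators $\L^o$ and $\L^g$ defined via Table~2 are precisely designed to serve as inverses of $L_1$, $L_2$, $L_3$.

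For existence, given $\Gamma'\in\F_i(a_1,\ldots,a_{q-1},a_q-1;b_1,\ldots,b_q)$, I would set $O:=\L^o(\Gamma')$ and $\Gamma:=\L^g(\Gamma')$. The first assertion of Lemma~\ref{lemma:lcwman:1} gives $\Gamma\in\F_i(a_1,\ldots,a_q;b_1,\ldots,b_q)$ together with the identity $O(\Gamma)=\Gamma'$, and $O$ is one of $L_1,L_2,L_3$ by construction of $\L^o$ from Table~2.

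For uniqueness, suppose $(O,\Gamma)$ is any pair in $\{L_1,L_2,L_3\}\times\F_i(a_1,\ldots,a_q;b_1,\ldots,b_q)$ with $O(\Gamma)=\Gamma'$. Since $\Gamma'$ is by hypothesis a flow in $\F_i(a_1,\ldots,a_{q-1},a_q-1;b_1,\ldots,b_q)$, the graph $O(\Gamma)$ is both well-defined and a flow, so the hypothesis of the second assertion of Lemma~\ref{lemma:lcwman:1} is satisfied. That assertion then forces $O=\L^o(O(\Gamma))=\L^o(\Gamma')$ and $\Gamma=\L^g(O(\Gamma))=\L^g(\Gamma')$, so the pair is uniquely determined.

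I do not anticipate a real obstacle here: the substantive combinatorial content — namely that the three rows of Table~2 exhaust and separate the possibilities for $\Gamma'$, that each row produces a valid flow in $\F_i(a_1,\ldots,a_q;b_1,\ldots,b_q)$, and that the resulting assignment correctly inverts $L_1$, $L_2$, $L_3$ — has already been absorbed into Lemma~\ref{lemma:lcwman:1}. The corollary is essentially a rephrasing of that lemma as an \emph{existence and uniqueness} statement, and the hypothesis $a_q-1\notin\{a_1,\ldots,a_{q-1}\}$ ensures that $a_q-1$ is a legitimate new source, so that the target flow set in the statement makes sense.
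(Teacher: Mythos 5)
Your proof is correct and takes essentially the same route as the paper: existence from the first assertion of Lemma~\ref{lemma:lcwman:1} applied to $(\L^o(\Gamma'),\L^g(\Gamma'))$, and uniqueness from the second assertion, which pins down any competing pair as $(\L^o(\Gamma'),\L^g(\Gamma'))$. The paper phrases uniqueness by comparing two arbitrary pairs rather than comparing one to the canonical pair, but the argument is the same.
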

\begin{proof}
It follows from the first assertion of Lemma~\ref{lemma:lcwman:1}
that $(\L^o(\Gamma'),\L^g(\Gamma'))$ is a suitable pair.
Now suppose that $(L_{k_1},\Gamma_1)$ and $(L_{k_2},\Gamma_2)$ are
pairs of $\{L_1,L_2,L_3\}\times\F_i(a_1,\ldots,a_q;b_1,\ldots,b_q)$
such that $L_{k_1}(\Gamma_1)=L_{k_2}(\Gamma_2)=\Gamma'$.
By the second assertion of Lemma~\ref{lemma:lcwman:1}, we have
$$
\begin{array}{lclclcl}
L_{k_1} &=&\L^o(L_{k_1}(\Gamma_1))&=&\L^o(L_{k_2}(\Gamma_2))&=&L_{k_2},\\[6pt]
\Gamma_1&=&\L^g(L_{k_1}(\Gamma_1))&=&\L^g(L_{k_2}(\Gamma_2))&=&\Gamma_2.
\end{array}
$$
\end{proof}

Now suppose that $a_q<i$, $i+1<b_q$ if $q\ge1$ and
$\Gamma'\in\F_i(a_1,\ldots,a_q,i;$\linebreak
$b_1,\ldots,b_q,i+1)$.
Then we use the table below to define the operator $\M^o(\Gamma')$ and the graph $\M^g(\Gamma')$.

\bigskip

\begin{tabular}{|l|c|l|}
\hline
\multicolumn{1}{|c|}{Condition} & \multicolumn{1}{c|}{}  & \multicolumn{1}{c|}{Value}\\
\hhline{|=|=|=|}
%\hline\hline
\multicolumn{1}{|l|}{there is an edge $(s,i+1)$}   &\multicolumn{1}{c|}{$\M^o(\Gamma')$}&\multicolumn{1}{c|}{$M_1$}\\
\cline{2-3}
 of $\Gamma'$ with $s<i$                            &$\M^g(\Gamma')$& obtained from $\Gamma'$ by replacing \\
                                           &               & $(s,i+1)$ with $(s,i)$\\
\hhline{|=|=|=|}
%\hline\hline
\multicolumn{1}{|l|}{$(i,i+1)$ is an edge of $\Gamma'$} &\multicolumn{1}{c|}{$\M^o(\Gamma')$}&\multicolumn{1}{c|}{$M_2$} \\
\cline{2-3}
                                                            &        $\M^g(\Gamma')$             &  obtained from $\Gamma'$ by\\
                                                            &                                    &  removing $(i,i+1)$\\
\hline
\multicolumn{3}{c}{\footnotesize Table 3}
\end{tabular}

\bigskip

This table defines the new graphs and the new
%symbols
operators
in the same way as Table~2.

\begin{lemma}\label{lemma:lcwman:2}
Suppose that $a_q<i$, $i+1<b_q$ if $q\ge1$.
For any flow $\Gamma'\in$\linebreak
$\F_i(a_1,{\ldots},a_q,i;%$\linebreak
%$
b_1,\ldots,b_q,i+1)$, we have
$\M^g(\Gamma'){\in}\F_i(a_1,{\ldots},a_q;b_1,{\ldots},b_q)$ and
$\M^o(\Gamma')\bigl(\M^g(\Gamma')\bigr)=\Gamma'$.
For any flow $\Gamma\in\F_i(a_1,\ldots,a_q;b_1,\ldots,b_q)$ and
integer $k=1,2$, we have $\M^o(M_k(\Gamma))=M_k$ and
$\M^g(M_k(\Gamma))=\Gamma$ if $M_k(\Gamma)$ is well-defined.
\end{lemma}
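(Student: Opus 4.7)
My plan is to prove this lemma by a routine case analysis mirroring the proof of Lemma~\ref{lemma:lcwman:1}, driven by the dichotomy set up in Table~3. The key starting observation is that in $\Gamma'\in\F_i(a_1,\ldots,a_q,i;b_1,\ldots,b_q,i{+}1)$, the integer $i$ is a source and $i{+}1$ is a sink, so exactly one edge of $\Gamma'$ begins at $i$ and exactly one edge of $\Gamma'$ ends at $i{+}1$. The hypotheses $a_q<i$ and $i{+}1<b_q$ guarantee that $i\notin\{a_1,\ldots,a_q\}$ and $i{+}1\notin\{b_1,\ldots,b_q\}$, so the new source $i$ and the new sink $i{+}1$ are genuinely distinct from the old ones.

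For the first assertion I split on whether the unique edge of $\Gamma'$ ending at $i{+}1$ is $(i,i{+}1)$ or $(s,i{+}1)$ with $s<i$; these are the conditions of the two rows of Table~3, and they are mutually exclusive and exhaustive. In the first case $\M^g(\Gamma')$ is obtained by removing $(i,i{+}1)$: in the resulting graph no edge begins at $i$ and no edge ends at $i{+}1$, so $i$ and $i{+}1$ cease to be a source and a sink respectively without becoming transit points, while all other vertices keep their roles. Hence $\M^g(\Gamma')\in\F_i(a_1,\ldots,a_q;b_1,\ldots,b_q)$, and the defining operation of $M_2$ in Table~1 (adding $(i,i{+}1)$, which is legitimate since no edge of $\M^g(\Gamma')$ begins at $i$) recovers $\Gamma'$. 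In the second case $\M^g(\Gamma')$ is obtained by replacing $(s,i{+}1)$ with $(s,i)$: the vertex $i$ now has exactly one edge entering (namely $(s,i)$) and one edge leaving (the one that already began at $i$ in $\Gamma'$), so $i$ becomes a transit point, while $i{+}1$ loses its role as a sink. Hence $\M^g(\Gamma')\in\F_i(a_1,\ldots,a_q;b_1,\ldots,b_q)$, and the defining operation of $M_1$ in Table~1 (replacing $(s,i)$ with $(s,i{+}1)$, which is legitimate since $i$ is now a transit point and $i{+}1$ is not a sink of $\M^g(\Gamma')$) recovers $\Gamma'$.

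For the second assertion I start with $\Gamma\in\F_i(a_1,\ldots,a_q;b_1,\ldots,b_q)$ and examine $M_1(\Gamma)$ and $M_2(\Gamma)$ in turn. If $M_1$ is applicable, then $i$ is a transit point of $\Gamma$, so the unique edge of $\Gamma$ entering $i$ has the form $(s,i)$ with $s<i$ and the unique edge leaving $i$ has the form $(i,t)$ with $t\ne i{+}1$ (because $i{+}1$ is not a sink of $\Gamma$). Performing the replacement of Table~1 therefore puts $M_1(\Gamma)$ into the first row of Table~3, and reading off the definitions immediately gives $\M^o(M_1(\Gamma))=M_1$ and $\M^g(M_1(\Gamma))=\Gamma$. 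Similarly, if $M_2$ is applicable then no edge of $\Gamma$ begins at $i$, so after adding $(i,i{+}1)$ the flow $M_2(\Gamma)$ contains $(i,i{+}1)$ as an edge, placing it in the second row of Table~3 and yielding $\M^o(M_2(\Gamma))=M_2$, $\M^g(M_2(\Gamma))=\Gamma$ by direct inspection.

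The only real content is the verification that the two rows of Table~3 are mutually exclusive and jointly exhaustive on admissible $\Gamma'$—and this is exactly what the opening observation delivers—so I do not expect any substantive obstacle; the whole argument is a bookkeeping exercise on Tables~1 and~3, entirely parallel to the one hidden behind the one-line proof of Lemma~\ref{lemma:lcwman:1}.
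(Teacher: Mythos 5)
Your proof is correct and takes exactly the approach the paper intends: the paper's proof of Lemma~\ref{lemma:lcwman:2} is the single sentence ``The result follows directly from Tables~1 and 3,'' and your case analysis is precisely the bookkeeping that sentence is hiding. In particular, your opening observation that in $\Gamma'$ the unique edge ending at $i+1$ starts either at $i$ or at some $s<i$ (making the two rows of Table~3 mutually exclusive and exhaustive), and your check that $(i,i{+}1)$ cannot be an edge of $M_1(\Gamma)$ because otherwise $i+1$ would be forced to be a sink or a transit point $>i$ of $\Gamma$, are exactly the small points one needs to make the ``follows directly'' claim rigorous.
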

\begin{proof}
The result follows directly from Tables~1 and 3.
\end{proof}

\begin{corollary}\label{corollary:lcwman:2}
Suppose that $a_q<i$, $i+1<b_q$ if $q\ge1$.
For any flow $\Gamma'\in$\linebreak
$\F_i(a_1,{\ldots},a_q,i;%$\linebreak
%$
b_1,\ldots,b_q,i+1)$, there exists a unique pair
$(O,\Gamma)\in$ \linebreak
$\{M_1,M_2\}\times\F_i(a_1,\ldots,a_q;b_1,\ldots,b_q)$
such that $O(\Gamma)=\Gamma'$.
\end{corollary}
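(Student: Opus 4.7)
My plan is to follow verbatim the structure of the proof of Corollary~\ref{corollary:lcwman:1}, simply replacing the triple $(L_1,L_2,L_3)$ with the pair $(M_1,M_2)$ and invoking Lemma~\ref{lemma:lcwman:2} in place of Lemma~\ref{lemma:lcwman:1}. The statement splits into an existence claim and a uniqueness claim, and both will be handled by the two assertions of Lemma~\ref{lemma:lcwman:2} respectively.

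For existence, I would exhibit the explicit pair $(\M^o(\Gamma'),\M^g(\Gamma'))$ produced by Table~3. The first assertion of Lemma~\ref{lemma:lcwman:2} already guarantees that $\M^g(\Gamma')$ belongs to $\F_i(a_1,\ldots,a_q;b_1,\ldots,b_q)$ and that $\M^o(\Gamma')\bigl(\M^g(\Gamma')\bigr)=\Gamma'$, so no further checking is needed beyond noting that $\M^o(\Gamma')\in\{M_1,M_2\}$ by construction. The only thing worth verifying is that Table~3 always applies, i.e.\ that exactly one of its two rows is triggered by $\Gamma'$: since $i+1$ is a sink of $\Gamma'$, there is a unique edge of $\Gamma'$ ending at $i+1$, and this edge is either $(i,i+1)$ (second row) or $(s,i+1)$ with $s<i$ (first row), these cases being mutually exclusive.

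For uniqueness, suppose $(M_{k_1},\Gamma_1)$ and $(M_{k_2},\Gamma_2)$ are two pairs in $\{M_1,M_2\}\times\F_i(a_1,\ldots,a_q;b_1,\ldots,b_q)$ with $M_{k_1}(\Gamma_1)=M_{k_2}(\Gamma_2)=\Gamma'$. Applying the second assertion of Lemma~\ref{lemma:lcwman:2} to each equality, one obtains
$$
M_{k_1}=\M^o(M_{k_1}(\Gamma_1))=\M^o(\Gamma')=\M^o(M_{k_2}(\Gamma_2))=M_{k_2},
$$
and similarly $\Gamma_1=\M^g(\Gamma')=\Gamma_2$, as desired.

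I do not expect any serious obstacle: the entire content is packaged in Lemma~\ref{lemma:lcwman:2}, and the argument is the standard ``left inverse plus right inverse implies bijection'' formal consequence, exactly as in Corollary~\ref{corollary:lcwman:1}. The only mildly delicate point is confirming that the hypotheses $a_q<i$ and (when $q\ge1$) $i+1<b_q$ are precisely what is needed so that $i$ and $i+1$ appear as a new source and a new sink of $\Gamma'$ respectively (rather than coinciding with one of the $a_j$ or $b_j$), so that Table~3 applies unambiguously; but this is already built into the hypotheses of Lemma~\ref{lemma:lcwman:2}, so nothing extra must be verified here.
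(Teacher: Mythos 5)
Your proof is correct and follows exactly the route the paper takes: the paper's own proof of Corollary~\ref{corollary:lcwman:2} simply states that it follows from Lemma~\ref{lemma:lcwman:2} in the same way Corollary~\ref{corollary:lcwman:1} follows from Lemma~\ref{lemma:lcwman:1}, and you have carried out that analogy in full, including the welcome sanity check that exactly one row of Table~3 applies.
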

\begin{proof} The result follows from Lemma~\ref{lemma:lcwman:2}
similarly to Corollary~\ref{corollary:lcwman:1}.
\end{proof}

Finally suppose that $q\ge1$, $b_q+1\notin\{b_{q-1},\ldots,b_1\}$ and
$\Gamma'\in\F_i(a_1,\ldots,a_q;$\linebreak
$b_1,\ldots,b_{q-1},b_q+1)$. Then we denote by $\R^g(\Gamma')$
the flow obtained from $\Gamma'$ by replacing the edge $(s,b_q+1)$
with $(s,b_q)$.
%and put $\R^o(\Gamma')=R$.
%%>лишнее обозначение

\begin{lemma}\label{lemma:lcwman:3}
Suppose that $q{\ge}1$ and $b_q{+}1\notin\{b_{q-1},{\ldots},b_1\}$.
For any flow $\Gamma'\in\F_i(a_1,\ldots,a_q;b_1,\ldots,b_{q-1},b_q+1)$, we have
$\R^g(\Gamma')\in\F_i(a_1,\ldots,a_q;b_1,\ldots,b_q)$ and
$R\bigl(\R^g(\Gamma')\bigr)=\Gamma'$.
For any flow $\Gamma\in\F_i(a_1,\ldots,a_q;b_1,\ldots,b_q)$,
we have
%$\R^o(R(\Gamma))=R$ and
$\R^g(R(\Gamma))=\Gamma$.
\end{lemma}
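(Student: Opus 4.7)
The lemma asserts that $\R^g$ is a two-sided inverse to $R$ on the appropriate sets of flows. I would verify three things in turn: first, that $\R^g(\Gamma')$ really is a flow with the advertised sources, sinks, and absence of transit points exceeding $i$; second, that $R$ is applicable to $\R^g(\Gamma')$ and returns $\Gamma'$; and third, that $\R^g$ is applicable to $R(\Gamma)$ and returns $\Gamma$.

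The crucial local check sits in the first step. Start by identifying the unique edge $(s,b_q+1)$ of $\Gamma'$ ending at the sink $b_q+1$, which exists and is unique by the flow axiom. I then need to argue that replacing this edge by $(s,b_q)$ does not destroy the flow structure. The key point is that $b_q$ must be isolated in $\Gamma'$: it is not a source because $b_q>i$ while sources are $\le i$; it is not a sink of $\Gamma'$ since $\{b_1,\ldots,b_{q-1},b_q+1\}$ does not contain $b_q$ (combining $b_{q-1}>b_q$ with the hypothesis $b_q+1\ne b_{q-1}$); and it cannot be a transit point either, because by the definition of $\F_i$ all transit points are $\le i$. Applying the same reasoning to the endpoint $s$ shows $s\le i<b_q$, so $(s,b_q)$ is a legitimate directed edge. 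With $b_q$ isolated in $\Gamma'$, adding $(s,b_q)$ makes $b_q$ a sink of $\R^g(\Gamma')$ with exactly one incoming edge, while deleting $(s,b_q+1)$ leaves $b_q+1$ isolated. The remaining local structure, in particular the sources, the other sinks, and the set of transit points, is untouched and still satisfies the constraints of $\F_i(a_1,\ldots,a_q;b_1,\ldots,b_q)$.

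Parts (ii) and (iii) are then immediate, because $\R^g$ and $R$ perform opposite edge replacements in a deterministic fashion. For (ii), the hypothesis required to apply $R$ to $\R^g(\Gamma')$, namely that $b_q+1$ is no sink, holds since the sinks of $\R^g(\Gamma')$ are exactly $b_1,\ldots,b_q$; the unique edge ending at the sink $b_q$ of $\R^g(\Gamma')$ is $(s,b_q)$, and $R$ sends it back to $(s,b_q+1)$, recovering $\Gamma'$. For (iii), $R$ takes the unique edge $(s,b_q)$ of $\Gamma$ ending at $b_q$ and promotes it to $(s,b_q+1)$, after which $\R^g$ reverses the operation.

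The only real obstacle is confirming that $b_q$ is isolated in $\Gamma'$; this relies crucially on the defining requirement that flows in $\F_i$ admit no transit points above $i$, which together with $b_q>i$ forces the only remaining option to be isolation. Once this observation is in place, the verifications reduce to inspecting Table~1 and the definition of $\R^g$.
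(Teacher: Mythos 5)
Your proof is correct. The paper actually omits the proof of Lemma~\ref{lemma:lcwman:3} entirely, treating it as routine in the same spirit as Lemmas~\ref{lemma:lcwman:1} and~\ref{lemma:lcwman:2}, whose proofs are one-liners referring to the tables. Your argument supplies the missing verification and correctly isolates the one nontrivial point: that $b_q$ cannot be a source, sink, or transit point of $\Gamma'$ (relying on sources and transit points being $\le i<b_q$ and on $b_q\notin\{b_1,\ldots,b_{q-1},b_q+1\}$), so inserting the edge $(s,b_q)$ does not violate the flow axioms. The remaining checks (applicability of $R$ to $\R^g(\Gamma')$, applicability of $\R^g$ to $R(\Gamma)$, and the fact that the two edge replacements undo one another) are then immediate from Table~1 and the definition of $\R^g$, exactly as you say. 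One minor phrasing point: when you write that ``applying the same reasoning to the endpoint $s$ shows $s\le i$'', what is really being used is that in any flow a vertex at which an edge begins must be a source or a transit point, and both of these are $\le i$ for flows in $\F_i(\cdots)$; spelling this out would make the step airtight.
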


\begin{corollary}\label{corollary:lcwman:3}
Suppose that $q{\ge}1$ and $b_q{+}1\notin\{b_{q-1},\ldots,b_1\}$.
For any flow $\Gamma'\in\F_i(a_1,\ldots,a_q;b_1,\ldots,b_{q-1},b_q+1)$,
there exists a unique flow
$\Gamma\in\F_i(a_1,\ldots,a_q;b_1,\ldots,b_q)$
such that $R(\Gamma)=\Gamma'$.
\end{corollary}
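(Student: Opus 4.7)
The plan is to deduce this corollary from Lemma~\ref{lemma:lcwman:3} in exactly the same way that Corollaries~\ref{corollary:lcwman:1} and~\ref{corollary:lcwman:2} are deduced from Lemmas~\ref{lemma:lcwman:1} and~\ref{lemma:lcwman:2}. In fact, the present situation is simpler because there is only one operator, $R$, in the group rather than three (as with $L_1,L_2,L_3$) or two (as with $M_1,M_2$), so no case analysis on the choice of operator is required.

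First, for existence, I would set $\Gamma := \R^g(\Gamma')$. Lemma~\ref{lemma:lcwman:3} immediately gives that $\Gamma \in \F_i(a_1,\ldots,a_q;b_1,\ldots,b_q)$ and that $R(\Gamma) = R(\R^g(\Gamma')) = \Gamma'$, so $\Gamma$ is a flow with the required property.

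For uniqueness, suppose $\Gamma_1, \Gamma_2 \in \F_i(a_1,\ldots,a_q;b_1,\ldots,b_q)$ both satisfy $R(\Gamma_1) = R(\Gamma_2) = \Gamma'$. Applying the second assertion of Lemma~\ref{lemma:lcwman:3} to each of them gives
$$
\Gamma_1 = \R^g(R(\Gamma_1)) = \R^g(\Gamma') = \R^g(R(\Gamma_2)) = \Gamma_2,
$$
which establishes uniqueness. There is no real obstacle here; the entire content is packaged into Lemma~\ref{lemma:lcwman:3}, which in turn is essentially an unwinding of the definition of $R$ in Table~1 together with the definition of $\R^g$ as replacing $(s,b_q+1)$ with $(s,b_q)$.
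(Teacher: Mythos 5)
Your proof is correct and is precisely what the paper intends by ``The result follows from Lemma~\ref{lemma:lcwman:3} similarly to Corollary~\ref{corollary:lcwman:1}'': existence from $\Gamma:=\R^g(\Gamma')$ and the first assertion of Lemma~\ref{lemma:lcwman:3}, uniqueness from the identity $\R^g(R(\Gamma))=\Gamma$. As you note, the argument is even simpler than in Corollary~\ref{corollary:lcwman:1} since there is only one operator $R$ and hence no need to track an operator component.
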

\begin{proof} The result follows from Lemma~\ref{lemma:lcwman:3}
similarly to Corollary~\ref{corollary:lcwman:1}.
\end{proof}

%\newpage

%\bigskip
%
%\hrule
%
%\bigskip

%\section{Criterion for $G=A_\ell(\K)$.}\label{Criterion for A_ell}
%\section{Criterion for $G=A_{n-1}(\K)$.}\label{Criterion for A}
%%Избавится от l в смысле отличном от ранга G

\subsection{Basis of the hyperalgebra}

In the remainder of the paper, we consider the case $G=A_\ell(\K)$.
We set $n:=\ell+1$, order the simple roots $\alpha_1,\ldots,\alpha_{n-1}$
so that they form the diagram

\begin{center}
\setlength{\unitlength}{1.2mm}
\begin{picture}(45,0)
\put(0,0){\circle{1}}
\put(10,0){\circle{1}}
\put(20,0){\circle{1}}
\put(40,0){\circle{1}}
\put(50,0){\circle{1}}
\put(0.5,0){\line(1,0){9}}
\put(10.5,0){\line(1,0){9}}
\put(40.5,0){\line(1,0){9}}
\put(20.5,0){\line(1,0){4}}
\put(39.5,0){\line(-1,0){4}}

\put(27.25,0){\circle{0}}
\put(30,0){\circle{0}}
\put(32.75,0){\circle{0}}

\put(-1,-3.5){$\scriptstyle\alpha_1$}
\put(9,-3.5){$\scriptstyle\alpha_2$}
\put(19,-3.5){$\scriptstyle\alpha_3$}
\put(49,-3.5){$\scriptstyle\alpha_{n-1}$}
\put(39,-3.5){$\scriptstyle\alpha_{n-2}$}
\end{picture}
\end{center}

\hspace{20mm}

\noindent
and use the %following
short hand notation
$$
\llbracket i_1,\ldots,i_k\rrbracket_i:=[\eta_{\alpha_{i_1}},\ldots,\eta_{\alpha_{i_k}},\theta_{\omega_i}],\label{ll}
$$
where $i_1,\ldots,i_k,i$ belong to $\{1,\ldots,n-1\}$.
Here and in what follows, $\eta_\alpha:=\eta_{\alpha,1}$,
$[x,y]=xy-yx$ and long commutators are right-normed,
i.\;e. are defined by the inductive rule $[x_1,\ldots,x_k]=[x_1,[x_2,\ldots,x_k]]$ for $k>2$.

%The signs in the commutator relations for the Chevalley basis
%are as follows. Suppose that $\alpha,\beta,\alpha+\beta\in\Sigma$.
%Then we have
%$$
%{\arraycolsep=0pt
%[X_\alpha,X_\beta]=
%\left\{
%\begin{array}{rcl}
%X_{\alpha+\beta}&\mbox{ if }&\alpha=\alpha_i+\cdots+\alpha_{j-1}\mbox{ and }
%                              \beta=\alpha_j+\cdots+\alpha_{k-1}\mbox{ or }\\
%                &           &\alpha=\alpha_i+\cdots+\alpha_{j-1}\mbox{ and }
%                              \beta=-\alpha_k-\cdots-\alpha_{j-1};
%-X_{\alpha+\beta}&\mbox{ if }&\alpha=\alpha_i+\cdots+\alpha_{j-1}\mbox{ and }
%                              \beta=\alpha_j+\cdots+\alpha_{k-1}\mbox{ or }\\
%\end{array}
%\right.}
%$$

% Since we are going to prove Conjecture~\ref{conjecture:lcwman2:B},
% we shall use the abbreviations $\eta_\alpha:=\eta_{\alpha,1}$,
% $X_\alpha:=X_{\alpha,1}$, $H_\alpha:=H_{\alpha,1}$,
% $X^V_\alpha:=X^V_{\alpha,1}$.
% %%>еще добавить, если встретится
% It is also convenient to put $X_{\alpha,m}:=0$ if $m<0$.

We denote by $\lm_i$ the $i$th entry of a sequence $\lm$ and
set $\delta_{\mathcal P}:=1$ if $\mathcal P$ is true
and $\delta_{\mathcal P}:=0$ if $\mathcal P$ is false
for an arbitrary condition $\mathcal P$.

In the present case, we can take $\mathfrak L=\mathfrak sl_n(\C)$
with the standard choice of the Chevalley basis:
$$
%\begin{array}{ll}
X_{\alpha_i+\cdots+\alpha_{j-1}}=E_{i,j},\quad\; X_{-\alpha_i-\cdots-\alpha_{j-1}}=E_{j,i},\quad\; H_{\alpha_i}=E_{i,i}-E_{i+1,i+1},
%\end{array}
$$
where $1\le i<j\le n$. Here and in what follows, $E_{i,j}$ denotes the $n\times n$
matrix with $1$ in the intersection of row $i$ and column $j$ and $0$ elsewhere.

We denote $E_{i,j}^{(m)}:=(E_{i,j}^m/m!)\otimes 1_\K$ for $m\in\Z^+$ and
%$H_i:=(E_{i,i}-E_{i+1,i+1})\otimes1_\K$.
$H_i:=H_{\alpha_i}\otimes1_\K$.
This is simply more convenient
notation for the elements introduced in Section~\ref{Introduction}, in the case
$G=A_\ell(\K)$.
Indeed, for $1\le i<j\le n$ we have
$$
E_{i,j}^{(m)}=X_{\alpha_i+\cdots+\alpha_{j-1},m},\quad E_{j,i}^{(m)}=X_{-\alpha_i-\cdots-\alpha_{j-1},m},\quad H_i=H_{\alpha_i,1}.
$$
We also denote
$E_{i,j}^{(1)}$ by $E_{i,j}$. This notation does not cause confusion, since
we shall work only in $\U$.

We extend $E_{i,j}^{(m)}$ to negative values of $m$ by setting $E_{i,j}^{(m)}:=0$ if $m<0$.
This definition, useless at first sight, is in fact very significant, in particular,
because the formulas

\begin{equation}\label{equation:lcwman:0.5}
\begin{array}{l}
[E_{i,j},E_{j,k}^{(m)}]=
 E_{i,k}E_{j,k}^{(m-1)},\quad [E_{i,j},E_{k,i}^{(m)}]=
 -E_{k,j}E_{k,i}^{(m-1)},\\[6pt]
[E_{i,i+1},E_{i+1,i}^{(m)}]=E_{i+1,i}^{(m-1)}(H_i+1-m),\\[6pt]
H_lE_{i,j}^{(m)}=E_{i,j}^{(m)}(H_l+m(\delta_{l=i}-\delta_{l=j}-\delta_{l+1=i}+\delta_{l+1=j})),\\[6pt]
 E_{i,j}^{(m)}E_{i,j}=(m+1)E_{i,j}^{(m+1)}
\end{array}
\end{equation}
hold for any $m\in\Z$ and mutually distinct $i,j,k=1,\ldots,n$.
These formulas are trivial for $m<0$ and can be proved by induction on $m$ for $m\ge0$.

More generally, the first three formulas of~(\ref{equation:lcwman:0.5}) are a special case of formulas (2.1) from~\cite{Shchigolev},
which also hold for any integer values of the superscripts.
% and are defining relations for the hyperalgebra of ${\rm GL}_n$.
The reader is referred to~\cite{Graham_Knuth_Patashnik} to see why, for example,
setting ${\tbinom x m}=0$ for $m<0$ helps simplify a lot of calculations. The same thing
happens here: the proof of Lemma~\ref{lemma:lcwman:3.5} would be much longer if
we considered $E_{i,j}^{(m)}$ only for $m\ge0$.

\subsection{Action of $\eta_{\alpha_l}$}
%In the case under consideration, we fix the following notation
%for basis elements of $\U^-$.
Let $\UT(n)$ denote the set of
$n\times n$ matrices $N$ with entries in $\Z$ such that
%the entries of $N$ belong to $\Z^+$ and
$N_{a,b}=0$ unless $a<b$.
We denote by $N^s$ the sum of elements in row $s$ of $N$,
i.e. $N^s:=\sum_{b=1}^nN_{s,b}$.
%Let $UT^+(n)$ be the subset of $UT(n)$ consisting of the matrices with
%nonnegative entries.
%Let $e_{s,t}$, where $1\le s<t\le n$, denote
%the element of $UT^+(n)$ with $1$ at the intersection of row $s$
%and column $t$ and $0$ elsewhere.
For any $N\in\UT(n)$, we set
$$
F^{(N)}:=\prod\nolimits_{1\le a<b\le n}E_{b,a}^{(N_{a,b})},\label{FN}
$$
where
$E_{b,a}^{(N_{a,b})}$
%$X_{-\alpha_a-\cdots-\alpha_{b-1},N_{a,b}}$
precedes
$E_{d,c}^{(N_{c,d})}$
%$X_{-\alpha_c-\cdots-\alpha_{d-1},N_{c,d}}$
if and only if $b<d$ or $b=d$ and $a<c$.
%By Proposition~\ref{proposition:lcwman2:0}, the set
%$\left\{F^{(N)}\|N\in\UT^+(n)\right\}$ is a basis of $\U^-$.
%Clearly, $F^{(N)}=0$ if $N\in UT(n)\setminus UT^+(n)$.
%
It follows from Proposition~\ref{proposition:lcwman2:0} that
any element of $\U^{-,0}$ is %uniquely
representable as $\sum_{N\in\UT(n)}F^{(N)}\H_N$, where $\H_N\in\U^0$.

\begin{lemma}\label{lemma:lcwman:3.5}
Let $l=1,\ldots,n-1$ and $\H_N$, where $N\in\UT(n)$, be elements of $\U^0$
that are nonzero only for finitely many matrices $N$.
We have
\begin{equation}\label{equation:lcwman:1}
{\arraycolsep=0pt
\begin{array}{l}
\displaystyle
\eta_{\alpha_l}
\(
\sum_{N\in\UT(n)}F^{(N)}\H_N
\){=}
\sum_{N\in\UT(n)}F^{(N)}\Biggl(\sum_{1\le s <l}N_{s,l}\H_{N+E_{s,l+1}-E_{s,l}}\\[30pt]
\displaystyle+(H_l-N^l+N^{l+1})\H_{N+E_{l,l+1}}%\\[12pt]\displaystyle
-\sum_{l+1<t\le n}N_{l+1,t}\H_{N+E_{l,t}-E_{l+1,t}}\Biggr).
\end{array}}
\end{equation}
%where $H$ is extended arbitrarily to upper triangular matrices with negative entries.
\end{lemma}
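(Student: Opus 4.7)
By $\K$-linearity it suffices to compute $\eta_{\alpha_l}(F^{(M)}\H_M)$ for a single summand and then reindex the contribution by $N$. Writing $F^{(M)}=P_1\cdots P_r$ in the standard order of the definition of $F^{(M)}$, the telescoping identity
$$
E_{l,l+1}F^{(M)}=\sum_{j=1}^r P_1\cdots P_{j-1}\,[E_{l,l+1},P_j]\,P_{j+1}\cdots P_r+F^{(M)}E_{l,l+1}
$$
reduces the task to evaluating the single-factor commutators. The tail $F^{(M)}E_{l,l+1}\H_M$ lies in the left ideal generated by $\{X_{\beta,k}:\beta\in\Sigma^+,k>0\}$: the fourth identity of~(\ref{equation:lcwman:0.5}), combined with the binomial identity for $\binom{H_\alpha+r}{m}\otimes1_\K$ recalled in Section~\ref{Introduction}, rewrites $E_{l,l+1}\H_M$ as $\H'_M E_{l,l+1}$ with $\H'_M\in\U^0$, so the tail equals $F^{(M)}\H'_M E_{l,l+1}$, visibly in that ideal. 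Hence the tail contributes nothing to $\eta_{\alpha_l}(F^{(M)}\H_M)$.

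The first three identities of~(\ref{equation:lcwman:0.5}) show that $[E_{l,l+1},P_j]$ is nonzero only in three configurations. First, if $P_j=E_{l+1,s}^{(M_{s,l+1})}$ with $s<l$, the commutator is $E_{l,s}E_{l+1,s}^{(M_{s,l+1}-1)}$; second, if $P_j=E_{t,l}^{(M_{l,t})}$ with $t>l+1$, it equals $-E_{t,l+1}E_{t,l}^{(M_{l,t}-1)}$; third, if $P_j=E_{l+1,l}^{(M_{l,l+1})}$, it equals $E_{l+1,l}^{(M_{l,l+1}-1)}(H_l+1-M_{l,l+1})$. In the first two cases the surplus generators $E_{l,s}$ and $E_{t,l+1}$ must be slid into their standard positions inside $F^{(M)}$; a direct check using~(\ref{equation:lcwman:0.5}) shows that every commutator encountered en route vanishes, so these moves are free. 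Absorbing the generators into $E_{l,s}^{(M_{s,l})}$ and $E_{t,l+1}^{(M_{l+1,t})}$ via the last identity of~(\ref{equation:lcwman:0.5}) yields coefficients $M_{s,l}+1=N_{s,l}$ and $-(M_{l+1,t}+1)=-N_{l+1,t}$ with source matrices $M=N+E_{s,l+1}-E_{s,l}$ and $M=N+E_{l,t}-E_{l+1,t}$ respectively; these reproduce the first and third sums in~(\ref{equation:lcwman:1}).

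The delicate step, and the main obstacle, is the third case. The factor $H_l+1-M_{l,l+1}$ must be slid right past every post-factor $E_{b,a}^{(M_{a,b})}$ with $b>l+1$ in order to reach $\H_M$. The fourth identity of~(\ref{equation:lcwman:0.5}) shows that each such crossing shifts $H_l$ by $M_{a,b}(\delta_{a,l+1}-\delta_{a,l})$; since $M_{l+1,a}=0$ for $a\le l+1$, summing over all post-factors produces the net shift $M^{l+1}-(M^l-M_{l,l+1})$. Adding the scalar $1-M_{l,l+1}$ collapses the bracket to $H_l+M^{l+1}-M^l+1$. Substituting $N=M-E_{l,l+1}$, so that $N^l=M^l-1$ and $N^{l+1}=M^{l+1}$, converts this to $H_l-N^l+N^{l+1}$; since $H_l$ commutes with $\H_M\in\U^0$, the third case contributes exactly $F^{(N)}(H_l-N^l+N^{l+1})\H_{N+E_{l,l+1}}$, matching the middle term of~(\ref{equation:lcwman:1}). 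The heart of the argument is the clean cancellation $M_{l,l+1}+(1-M_{l,l+1})=1$ between the $H_l$-shift and the scalar from the original commutator, which removes all dependence on $M_{l,l+1}$ and produces the advertised $N$-coordinate expression. Collecting the three families of contributions finishes the proof.
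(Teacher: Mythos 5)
Your proof is correct and follows essentially the same strategy as the paper's: compute $[E_{l,l+1},F^{(M)}]$ by commuting $E_{l,l+1}$ past the factors of $F^{(M)}$ one at a time, discard the tail $F^{(M)}E_{l,l+1}\H_M$ because it lies in the ideal generated by the positive divided powers, and identify the three nonvanishing single-factor commutators with the three families of terms in~(\ref{equation:lcwman:1}). The only cosmetic difference is that the paper organizes $F^{(N)}=F_2\cdots F_n$ into row blocks $F_j$ (commuting $E_{l,l+1}$ past $F_2,\ldots,F_l$ at once and then tracking the surplus through $F_{l+1},\ldots,F_n$), while you use a fully flattened telescoping identity over individual factors and then slide each surplus generator to its standard slot; both amount to the same bookkeeping, including the accumulated $H_l$-shift $M^{l+1}-(M^l-M_{l,l+1})$ that cancels against the $1-M_{l,l+1}$ from the original commutator.
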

\begin{proof}
%It is more convenient to prove~(\ref{equation:lcwman:1})
%with summation running over $N\in UT(n)$.
%Here we assume that $X_{\alpha,m}=0$ if $m<0$ and
First we prove that
%\begin{equation}\label{equation:ee:0}
%\begin{array}{l}
%\displaystyle [X_{\alpha_l},F^{(N)}]=\sum\nolimits_{1\le s<l}(N_{s,l}+1)F^{(N-e_{s,l+1}+e_{s,l})}\\[6pt]
%\displaystyle +F^{(N-e_{l,l+1})}\Biggl(H_{\alpha_l}+1-\sum\nolimits_{l<b\le n}N_{l,b}+\sum\nolimits_{l+1<b\le n}N_{l+1,b}\Biggr)\\[12pt]
%\displaystyle -\sum\nolimits_{l+1<t\le n}(N_{l+1,t}+1)F^{(N-e_{l,t}+e_{l+1,t})}
%\end{array}
%\end{equation}
\begin{equation}\label{equation:lcwman:1.5}
\begin{array}{l}
\displaystyle [E_{l,l+1},F^{(N)}]=\sum_{1\le s<l}(N_{s,l}+1)F^{(N-E_{s,l+1}+E_{s,l})}+\\[12pt]
\displaystyle F^{(N-E_{l,l+1})}\bigl(H_l{+}1{-}N^l{+}N^{l+1}\bigr)%\\[12pt]
%\displaystyle
              -\sum_{l+1<t\le n}(N_{l+1,t}{+}1)F^{(N-E_{l,t}+E_{l+1,t})}
\end{array}
\end{equation}
for any $N\in\UT(n)$.
Consider the representation $F^{(N)}=F_2\cdots F_n$,
where
%$F_j=X_{-\alpha_1-\cdots-\alpha_{j-1},N_{1,j}}\cdots X_{-\alpha_{j-2}-\alpha_{j-1},N_{j-2,j}}X_{-\alpha_{j-1},N_{j-1,j}}$.
$F_j=E_{j,1}^{(N_{1,j})}\cdots E_{j,j-2}^{(N_{j-2,j})}E_{j,j-1}^{(N_{j-1,j})}$.
Since $E_{l,l+1}$ commutes with %each product
$F_2,\ldots,F_l$, we have
$E_{l,l+1}F^{(N)}=F_2\cdots F_lE_{l,l+1}F_{l+1}\cdots F_n$.
Applying~(\ref{equation:lcwman:0.5}), we obtain
\begin{equation}\label{equation:lcwman:1.5625}
\begin{array}{rcl}
\displaystyle E_{l,l+1}F_{l+1}&=&\displaystyle\sum\nolimits_{1\le s<l}\left(E_{l,s}
\prod\nolimits_{1\le r\le l}E_{l+1,r}^{(N_{r,l+1}-\delta_{r=s})}\right)+\\[12pt]
\displaystyle                    &&\displaystyle\(\prod\nolimits_{1\le r<l}E_{l+1,r}^{(N_{r,l+1})}\)E_{l,l+1}E_{l,l+1}^{(N_{l,l+1})}.
\end{array}
\end{equation}
Applying~(\ref{equation:lcwman:0.5}) again, we obtain
$$
{
\arraycolsep=0pt
\begin{array}{l}
E_{l,l+1}E_{l+1,l}^{(N_{l,l+1})}F_{l+2}\cdots F_n=
E_{l+1,l}^{(N_{l,l+1})}E_{l,l+1}F_{l+2}\cdots F_n\\[6pt]
+E_{l+1,l}^{(N_{l,l+1}-1)}(H_l+1-N_{l,l+1})F_{l+2}\cdots F_n=E_{l+1,l}^{(N_{l,l+1})}E_{l,l+1}F_{l+2}\cdots F_n\\[6pt]
+E_{l+1,l}^{(N_{l,l+1}-1)}F_{l+2}(H_l+1-N_{l,l+1}-N_{l,l+2}+N_{l+1,l+2})F_{l+3}\cdots F_n=\cdots\\[6pt]
=E_{l+1,l}^{(N_{l,l+1})}E_{l,l+1}F_{l+2}\cdots F_n+
E_{l+1,l}^{(N_{l,l+1}-1)}F_{l+2}\cdots F_n(H_l{+}1{-}N^l{+}N^{l+1}).
\end{array}
}
$$
%Thus we have obtained
Thus, multiplying~(\ref{equation:lcwman:1.5625}) by $F_2\cdots F_l$ on the left and
by $F_{l+2}\cdots F_n$ on the right, we obtain
$$
{
\arraycolsep=0pt
\begin{array}{l}
\displaystyle
E_{l,l+1}F^{(N)}{=}\!
\sum\nolimits_{1\le s<l}\!\left(F_2{\cdots}F_lE_{l,s}
\prod\nolimits_{1\le r\le l}E_{l+1,r}^{(N_{r,l+1}-\delta_{r=s})}\right)\!\times\\[6pt]
\displaystyle
\times F_{l+2}\cdots F_n
+F_2\cdots F_l\(\prod\nolimits_{1\le r<l}E_{l+1,r}^{(N_{r,l+1})}\)
E_{l+1,l}^{(N_{l,l+1}-1)}\times\\[12pt]
\times F_{l+2}\cdots F_n(H_l{+}1{-}N^l{+}N^{l+1})
+F_2\cdots F_{l+1}E_{l,l+1}F_{l+2}\cdots F_n=\\[6pt]
\displaystyle\sum\nolimits_{1\le s<l}(N_{s,l}+1)F^{(N-E_{s,l+1}+E_{s,l})}+
F^{(N-E_{l,l+1})}(H_l{+}1{-}N^l{+}N^{l+1})+\\[10pt]
F_2\cdots F_{l+1}E_{l,l+1}F_{l+2}\cdots F_n.
\end{array}
}
$$
In the above rearrangement, we used that
$$
F_lE_{l,s}=(N_{s,l}+1)\prod\nolimits_{1\le r<l}E_{l,r}^{(N_{r,l}+\delta_{r=s})}\mbox{ for }1\le s<l.
$$
It remains to calculate $E_{l,l+1}F_{l+2}\cdots F_n$.
Let $t=l+2,\ldots,n$. All factors of $F_t$ commute with
$E_{l,l+1}$ except $E_{t,l}^{(N_{l,t})}$.
Applying~(\ref{equation:lcwman:0.5}), we obtain
$$
\begin{array}{l}
E_{l,l+1}E_{t,l}^{(N_{l,t})}E_{t,l+1}^{(N_{l+1,t})}=
            -E_{t,l}^{(N_{l,t}-1)}E_{t,l+1}E_{t,l+1}^{(N_{l+1,t})}%\\[6pt]
            +E_{t,l}^{(N_{l,t})}E_{t,l+1}^{(N_{l+1,t})}E_{l,l+1}\\[6pt]
            =-(N_{l+1,t}+1)E_{t,l}^{(N_{l,t}-1)}
            E_{t,l+1}^{(N_{l+1,t}+1)}
            +E_{t,l}^{(N_{l,t})}
            E_{t,l+1}^{(N_{l+1,t})}E_{l,l+1}.
\end{array}
$$
Hence $E_{l,l+1}F_t=-(N_{l+1,t}+1)\prod\nolimits_{1\le r<t}E_{t,r}^{(N_{r,t}-\delta_{r=l}+\delta_{r=l+1})}+F_tE_{l,l+1}$
and~(\ref{equation:lcwman:1.5}) follows.

%If $N\in UT(n)\setminus UT^+(n)$ then $[X_{\alpha_l},F^{(N)}]=0$.
%On the other hand, the right-hand side of~(\ref{equation:lcwman:1.5})
%is also zero. Indeed if for some $1\le s<l$,
%we have $N-e_{s,l+1}+e_{s,l}\in UT^+(n)$, then $N_{s,l}+1=0$.
%The third factor of the right-hand side of~(\ref{equation:lcwman:1.5})
%is treated similarly.

Now~(\ref{equation:lcwman:1}) follows from~(\ref{equation:lcwman:1.5})
and the equivalence
$$
E_{l,l+1}\sum\nolimits_{N\in\UT(n)}F^{(N)}\H_N\=
\sum\nolimits_{N\in\UT(n)}[E_{l,l+1},F^{(N)}]\H_N\!\!\!\!\!\pmod{\U\cdot E_{l,l+1}}.
$$
\end{proof}

%\newpage

We shall identify any graph $\Gamma$ with vertices $\Z$ and
finitely many edges all having the form $(s,t)$, where $1\le s<t\le n$,
with
%the $n\times n$ upper triangular matrix $N$
the matrix of $\UT(n)$
such that $N_{s,t}$
equals the number of edges begging at $s$ and ending at $t$.

{\it Example.} Let $n=4$ and $\Gamma$ have edges $(1,2)$, $(1,4)$, $(2,3)$, $(3,4)$.
Then $\Gamma$ is identified with the matrix
$$
\left(\!
\begin{array}{cccc}
0&1&0&1\\
0&0&1&0\\
0&0&0&1\\
0&0&0&0
\end{array}
\!
\right)
$$

%\begin{definition}\label{definition:lcwman2:2}
%Let $1\le a\le i\le b<n$. A sequence of integers
%$i_1,\ldots,i_k$ is said to satisfy condition
%$\O(a,i,b)$ if
%these integers are mutually distinct and their union is $[a..b]\setminus\{i\}$;
%$i_s<i_t<i$ implies $s<t$; $i<i_s<i_t$ implies $t<s$.
%\end{definition}

\begin{definition}\label{definition:lcwman2:2}
Let $1\,{\le}\,a\,{\le}\,i\,{\le}\,b\,{<}\,n$.\!\! A sequence of integers
%$i_1,\ldots,i_k$
satisfies $\O(a,i,b)$ if
this sequence is obtained by inserting the sequences $a,a{+}1,\ldots,i{-}1$
and $b,b{-}1,\ldots,i{+}1$ into one another, preserving the order in
either of them.
\end{definition}

\begin{remark}\label{remark:lcwman2:1.5}\!\!
The only sequence satisfying $\O(i,i,i)$ is the empty sequence~$\emptyset$.
\end{remark}

\medskip

{\it Example.} The sequence $\underline 1,8,\underline 2,\underline 3,7,6,\underline 4$ satisfies $\O(1,5,8)$.
We underlined the elements of the sequence $a,a{+}1,\ldots,i{-}1$.

%%\theta_i=\theta_{\omega_i}
%%\eta_i=\eta_{\alpha_i}

%Фиксируем целые числа
%$1\le a_1<a_2<\cdots<a_q\le i<b_q<\cdots<b_2<b_1\le n$.

%Fix some integers $1\le a_1<a_2<\cdots<a_q\le i<b_q<\cdots<b_2<b_1\le n$.

%Обозначим через $\F(a_q,\ldots,a_1,i,b_1,\ldots,b_q)$
%множество последовательностей $(A_1,\ldots,A_q)$ подмножеств $\Z$ таких, что
%\begin{enumerate}
%\item множества $A_1,\ldots,A_q$ попарно не пересекаются;
%\item для любого $j=1,\ldots,q$ существуют $s,t=1,\ldots,q$
%      такие, что $a_s,b_t\in A_j\subset[a_s..i]\cup\{b_t\}$.
%\end{enumerate}

%\newpage

\begin{lemma}\label{lemma:lcwman:4}
Fix some integers $1\le a_1<a_2<\cdots<a_q\le i<b_q<\cdots<b_2<b_1\le n$.
Let
\begin{equation}\label{equation:lcwman:1.625}
i^{(q)}_1,{\ldots},i^{(q)}_{k_q};\quad
i^{(q-1)}_1,\ldots,i^{(q-1)}_{k_{q-1}};
\quad\ldots\quad;\quad
i^{(1)}_1,\ldots,i^{(1)}_{k_1}
\end{equation}
be sequences satisfying $\O(a_q,i,b_q-1)$, $\O(a_{q-1},i,b_{q-1}-1)$, \ldots, $\O(a_1,i,b_1-1)$ respectively.
For $\H_N$ as in Lemma~\ref{lemma:lcwman:3.5}, we have
$$
{\arraycolsep=0pt
\begin{array}{l}
\displaystyle
%[\eta(i_{q-1,1}),\ldots,\eta(i_{q-1,k_{q-1}}),\eta(i),\eta(i_{q,1}),\ldots,\eta(i_{q,k_q}),\eta(i),\theta_i]
%\Bigl[\!\!\Bigl[
\left\llbracket
i^{(q)}_1,{\ldots},i^{(q)}_{k_q},i,
i^{(q-1)}_1,{\ldots},i^{(q-1)}_{k_{q-1}},i,\;\ldots,\;
 i^{(1)}_1,{\ldots},i^{(1)}_{k_1},i
 %\Bigr]
 \right\rrbracket
 _i\!\!
\left(\sum_{N\in\UT(n)}\!F^{(N)}\H_N\right)\!{=}\\[6pt]
\displaystyle
\sum_{N\in\UT(n)}F^{(N)}
\sum_{\Gamma\in\F_i(a_1,\ldots,a_q;b_1,\ldots,b_q)}
\sgn_i(\Gamma)\theta_{\omega_i}\(\H_{N+\Gamma}\).
\end{array}}
$$
\end{lemma}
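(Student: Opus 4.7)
\emph{Proof plan.}
I will prove the identity by induction on the total length $K=q+\sum_{j=1}^q k_j$ of the bracket sequence to the left of $\theta_{\omega_i}$. The base case $K=0$ forces $q=0$ and all $k_j=0$, so the bracket reduces to $\theta_{\omega_i}$; the only flow in $\F_i(\emptyset;\emptyset)$ is the empty one (with $\sgn_i=+1$), and both sides collapse to $\sum_N F^{(N)}\theta_{\omega_i}(\H_N)$.

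For the inductive step I peel off the outermost entry $l$ of the bracket, writing $\llbracket l,\vec y\rrbracket_i=[\eta_{\alpha_l},\llbracket\vec y\rrbracket_i]$. By induction the inner bracket $\llbracket\vec y\rrbracket_i$ applied to $\sum_N F^{(N)}\H_N$ is a signed sum over flows $\Gamma_0$ in a smaller flow set $\F_0$. Expanding both $\eta_{\alpha_l}\circ\llbracket\vec y\rrbracket_i$ and $\llbracket\vec y\rrbracket_i\circ\eta_{\alpha_l}$ via Lemma~\ref{lemma:lcwman:3.5} (applying induction a second time for the latter), together with the identity $\theta_{\omega_i}(H_l)=H_l+\delta_{l,i}$, the commutator evaluates to
\begin{align*}
\sum_N F^{(N)}\sum_{\Gamma_0\in\F_0}\sgn_i(\Gamma_0)\biggl[&-\sum_{s<l}\Gamma_{0,s,l}\,\theta_{\omega_i}(\H_{N+\Gamma_0+E_{s,l+1}-E_{s,l}})\\
&+(\Gamma_0^l-\Gamma_0^{l+1}-\delta_{l,i})\,\theta_{\omega_i}(\H_{N+\Gamma_0+E_{l,l+1}})\\
&+\sum_{t>l+1}\Gamma_{0,l+1,t}\,\theta_{\omega_i}(\H_{N+\Gamma_0+E_{l,t}-E_{l+1,t}})\biggr],
\end{align*}
where $\Gamma_0^r:=\sum_c\Gamma_{0,r,c}$ is the $r$-th row sum (which is $0$ or $1$ for a flow). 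This single identity is the computational heart of the proof.

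Next I match these three summands with the flow operators of Section~\ref{Flows}, splitting into three cases. \emph{Case~A:} $l=a_q<i$ (the left subsequence of the outermost $\O$-block begins), so $\F_0=\F_i(a_1,\ldots,a_{q-1},a_q+1;b_1,\ldots,b_q)$ and the three summands realise $L_1,L_2,L_3$ in turn. \emph{Case~B:} $l=b_q-1>i$ (the right subsequence begins), so $\F_0=\F_i(a_1,\ldots,a_q;b_1,\ldots,b_{q-1},b_q-1)$ and only the first summand survives, realising $R$, because by the definition of $\F_i$ no vertex above $i$ can be a source or a transit point. \emph{Case~C:} $l=i$ (a separator, which happens precisely when $k_q=0$, forcing $a_q=i$ and $b_q=i+1$); then $\F_0=\F_i(a_1,\ldots,a_{q-1};b_1,\ldots,b_{q-1})$, the first two summands realise $M_1$ and $M_2$, and the $-\delta_{l,i}=-1$ shift in the middle coefficient is exactly what gives $M_2$ the correct sign. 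A direct coefficient computation in each case, together with Lemmas~\ref{lemma:L1}--\ref{lemma:R}, shows that each surviving contribution $\theta_{\omega_i}(\H_{N+O(\Gamma_0)})$ carries the coefficient $\sgn_i(O(\Gamma_0))$, and Corollaries~\ref{corollary:lcwman:1}, \ref{corollary:lcwman:2}, and~\ref{corollary:lcwman:3} guarantee that every flow $\Gamma$ in the new target set arises from a unique pair $(O,\Gamma_0)$; the sum therefore reorganises as $\sum_\Gamma\sgn_i(\Gamma)\theta_{\omega_i}(\H_{N+\Gamma})$.

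The delicate point will be Case~A with the operator $L_3$ when $a_q$ is already a transit point of $\Gamma_0$: the would-be graph $L_3(\Gamma_0)$ then has two edges leaving $a_q$ and fails to be a flow. The last assertion of Lemma~\ref{lemma:L3} saves the situation by supplying a unique partner $\Gamma_0'\neq\Gamma_0$ in $\F_0$ with $L_3(\Gamma_0')=L_3(\Gamma_0)$ and $\sgn_i(\Gamma_0')=-\sgn_i(\Gamma_0)$, so the two non-flow contributions cancel in the inner sum over $\F_0$ and only genuine-flow contributions survive, completing the induction.
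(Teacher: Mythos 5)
Your proposal is correct and follows essentially the same strategy as the paper's own proof: induction on the length of the bracket sequence, peeling one generator $\eta_{\alpha_l}$ off the left, expanding the commutator via Lemma~\ref{lemma:lcwman:3.5} and the inductive hypothesis to obtain the three-term sum with coefficients $-\Gamma_{0,s,l}$, $\Gamma_0^l-\Gamma_0^{l+1}-\delta_{l,i}$ and $\Gamma_{0,l+1,t}$, then matching the surviving terms with the flow operators $L_1,L_2,L_3$, $M_1,M_2$, $R$ and invoking Corollaries~\ref{corollary:lcwman:1}--\ref{corollary:lcwman:3} to reindex the sum over $\F_i(a_1,\ldots,a_q;b_1,\ldots,b_q)$. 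Your identification of the non-flow cancellation via the last clause of Lemma~\ref{lemma:L3} as the one delicate point is exactly the role played by the ``bottom rightmost'' entry of Table~4 in the paper, and the observation that $\theta_{\omega_i}(H_l)=H_l+\delta_{l,i}$ produces the $-1$ shift needed for $M_2$ matches the paper's verification of Table~5; the only cosmetic difference is that the paper phrases the induction as building sequences outward from $\F_i(a_1,\ldots,a_q;\cdots)$ while you phrase it as peeling inward to $\F_0$, which is the same argument after relabelling the parameters.
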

\begin{proof} We put for brevity
%\begin{equation}\label{equation:lcwman:1.75}
$$
\begin{array}{rcl}
x&:=&
\left\llbracket
%
%i^{(q)}_1,\ldots,i^{(q)}_{k_q},i,\ldots,
%i^{(2)}_1,\ldots,i^{(2)}_{k_2},i,
%i^{(1)}_1,\ldots,i^{(1)}_{k_1},i
%
i^{(q)}_1,{\ldots},i^{(q)}_{k_q},i,
i^{(q-1)}_1,{\ldots},i^{(q-1)}_{k_{q-1}},i,\;\ldots,\;
i^{(1)}_1,{\ldots},i^{(1)}_{k_1},i
\right\rrbracket_i,\\[6pt]
%\end{equation}
\F&:=&\F_i(a_1,\ldots,a_q;b_1,\ldots,b_q).
\end{array}
$$

%***

We apply induction on the length of the sequence
inside the brackets $\llbracket\;\rrbracket$ in the formulation of the current lemma
(the above formula).
%
%$i_{1,1},\ldots,i_{1,k_1},i,\linebreak i_{2,1},\ldots,i_{2,k_2},i,\ldots,
%i_{q,1},\ldots,i_{q,k_q},i$.
If this length is zero (i.e. $q=0$), then the required equality holds since
$\llbracket\;\rrbracket_i=\theta_{\omega_i}$, $\F_i()$ consists of the empty flow
$\Gamma_\emptyset$, which is identified with the zero matrix, and
$\sgn_i(\Gamma_\emptyset)=1$.

Now suppose that the equality in the formulation of the current lemma holds.
Take any $l=1,\ldots,n-1$ and set
\begin{equation}\label{equation:lcwman:1.875}
y\,{:=}\,[\eta_{\alpha_l},x]\,{=}\!
\left\llbracket l,
i^{(q)}_1,{\ldots},i^{(q)}_{k_q},i,
i^{(q-1)}_1,{\ldots},i^{(q-1)}_{k_{q-1}},i,\;\ldots,\;
i^{(1)}_1,{\ldots},i^{(1)}_{k_1},i
%
%i^{(q)}_1,\ldots,i^{(q)}_{k_q},i,\ldots,
%i^{(2)}_1,\ldots,i^{(2)}_{k_2},i,
%i^{(1)}_1,\ldots,i^{(1)}_{k_1},i
\right\rrbracket_i\!.\!
%
%\Bigl[l,i^{(1)}_1,\ldots,i^{(1)}_{k_1},i,
% i^{(2)}_1,\ldots,i^{(2)}_{k_2},i,\ldots,
% i^{(q)}_1,\ldots,i^{(q)}_{k_q},i\Bigr]_i.
\end{equation}
Applying~(\ref{equation:lcwman:1}) and the inductive hypothesis, we get
$$
\begin{array}{l}
\displaystyle
\eta_{\alpha_l}x\(\sum\nolimits_{N\in\UT(n)}F^{(N)}\H_N\)=\\[12pt]
\displaystyle
\eta_{\alpha_l}
\(
\sum\nolimits_{N\in\UT(n)}F^{(N)}\[
\sum\nolimits_{\Gamma\in\F}
\sgn_i(\Gamma)\theta_{\omega_i}\(\H_{N+\Gamma}\)\]
\)=\\[12pt]
\displaystyle
\sum\nolimits_{N\in\UT(n)}F^{(N)}
\biggl(
\sum\nolimits_{1\le s <l}N_{s,l}
\[
\sum\nolimits_{\Gamma\in\F}
\sgn_i(\Gamma)\theta_{\omega_i}(\H_{N+E_{s,l+1}-E_{s,l}+\Gamma})
\]\\[12pt]
\displaystyle
+(H_l-N^l+N^{l+1})
\[
\sum\nolimits_{\Gamma\in\F}
\sgn_i(\Gamma)\theta_{\omega_i}(\H_{N+E_{l,l+1}+\Gamma})
\]\\[6pt]
\displaystyle
-\sum\nolimits_{l+1<t\le n}N_{l+1,t}
\[
\sum\nolimits_{\Gamma\in\F}
\sgn_i(\Gamma)\theta_{\omega_i}(\H_{N+E_{l,t}-E_{l+1,t}+\Gamma})
\]
\biggr)=\\[12pt]
\displaystyle
\sum\nolimits_{N\in\UT(n)}F^{(N)}
\sum\nolimits_{\Gamma\in\F}
\biggl[
\sum\nolimits_{1\le s <l}\sgn_i(\Gamma)N_{s,l}
\theta_{\omega_i}(\H_{N+E_{s,l+1}-E_{s,l}+\Gamma})\\[18pt]
+\sgn_i(\Gamma)(H_l-N^l+N^{l+1})
 \theta_{\omega_i}(\H_{N+E_{l,l+1}+\Gamma})\\[12pt]
\displaystyle
-\sum\nolimits_{l+1<t\le n}\sgn_i(\Gamma)N_{l+1,t}
 \theta_{\omega_i}(\H_{N+E_{l,t}-E_{l+1,t}+\Gamma})
\biggr].
\end{array}
$$
On the other hand, we get
$$
\begin{array}{l}
\displaystyle
x\,\eta_{\alpha_l}\(\sum\nolimits_{N\in\UT(n)}F^{(N)}\H_N\)=\\[12pt]
\displaystyle
x
\biggl(
\sum\nolimits_{N\in\UT(n)}F^{(N)}\biggl[\sum\nolimits_{1\le s <l}N_{s,l}\H_{N+E_{s,l+1}-E_{s,l}}\\[18pt]
\displaystyle+(H_l-N^l+N^{l+1})\H_{N+E_{l,l+1}}%\\[12pt]\displaystyle
-\sum\nolimits_{l+1<t\le n}N_{l+1,t}\H_{N+E_{l,t}-E_{l+1,t}}\biggr]
\biggr)=\\[12pt]
\displaystyle
\sum\nolimits_{N\in\UT(n)}F^{(N)}\sum\nolimits_{\Gamma\in\F}
\biggl[\sum\nolimits_{1\le s <l}\sgn_i(\Gamma)\,(N{+}\Gamma)_{s,l}\,\theta_{\omega_i}(\H_{N+\Gamma+E_{s,l+1}-E_{s,l}})\\[18pt]
\displaystyle
+\sgn_i(\Gamma)\,(\theta_{\omega_i}(H_l)-(N+\Gamma)^l+(N+\Gamma)^{l+1})\,\theta_{\omega_i}(\H_{N+\Gamma+E_{l,l+1}})\\[12pt]
\displaystyle
-\sum\nolimits_{l+1<t\le n}\sgn_i(\Gamma)\,(N+\Gamma)_{l+1,t}\,\theta_{\omega_i}(\H_{N+\Gamma+E_{l,t}-E_{l+1,t}})\biggr].
\end{array}
$$
%Вычитая второе выражение из первого, получаем
Subtracting the latter expression from the former, we obtain
(recall the definition of $\theta_\delta$ in Section~\ref{twisting operators})
$$
\begin{array}{l}
\displaystyle
y\(\sum\nolimits_{N\in\UT(n)}F^{(N)}\H_N\)=\\[12pt]
\displaystyle
\sum\nolimits_{N\in\UT(n)}F^{(N)}\sum\nolimits_{\Gamma\in\F}
\biggl[\sum\nolimits_{1\le s <l}-\sgn_i(\Gamma)\,\Gamma_{s,l}\,\theta_{\omega_i}(\H_{N+\Gamma+E_{s,l+1}-E_{s,l}})\\[18pt]
\displaystyle
+\sgn_i(\Gamma)\,(-\delta_{i=l}+\Gamma^l-\Gamma^{l+1})\,
 \theta_{\omega_i}(\H_{N+\Gamma+E_{l,l+1}})+\\[12pt]
\displaystyle
\sum\nolimits_{l+1<t\le n}\sgn_i(\Gamma)\,\Gamma_{l+1,t}\,
\theta_{\omega_i}(\H_{N+\Gamma+E_{l,t}-E_{l+1,t}})
\biggr].
\end{array}
$$
We denote by $S_1(\Gamma)$, $S_2(\Gamma)$ and $S_3(\Gamma)$ the first,
the second and the third summands respectively
in the square brackets in the above formula.

Now we want to choose $l$ so that the sequence in the brackets $\llbracket\;\rrbracket$
of~(\ref{equation:lcwman:1.875}) could appear in the brackets $\llbracket\;\rrbracket$
in the formulation of the current lemma for different
sequences~(\ref{equation:lcwman:1.625}) and possibly different $q$ (Case~2).

{\it Case 1: $q\ge1$ and $1\le l=a_q-1\notin\{a_1,\ldots,a_{q-1}\}$.}
The values of $S_1(\Gamma)$, $S_2(\Gamma)$ and $S_3(\Gamma)$
are given in the table below.

%\bigskip
%
%
%\begin{tabular}{|l|c|c|c|}
%\hline
%\multicolumn{1}{|c|}{Condition}           & \multicolumn{1}{c|}{$S_1(\Gamma)$}  & \multicolumn{1}{c|}{$S_2(\Gamma)$} & \multicolumn{1}{c|}{$S_3(\Gamma)$} \\
%\hline
%$a_q{-}1$ is no transit  &      0  & $-\sgn_i(\Gamma)\theta_{\omega_i}\bigl(H_{N+e_{L_2(\Gamma)}}\bigr)$  & $\sgn_i(\Gamma)\theta_{\omega_i}\bigl(H_{N+e_{L_3(\Gamma)}}\bigr)$ \\
% point of $\Gamma$       &         &                                                                          &                                                                        \\
%\hline
%$a_q{-}1$ is a transit   &  $-\sgn_i(\Gamma)\theta_{\omega_i}\bigl(H_{N+e_{L_1(\Gamma)}}\bigr)$      &  0 & $\sgn_i(\Gamma)\theta_{\omega_i}\bigl(H_{N+e_{L_3(\Gamma)}}\bigr)$ \\
% point of $\Gamma$       &         &                                                                          &                                                                        \\
%\hline
%\end{tabular}
%
%\bigskip

\bigskip
{
\extrarowheight=3pt
\begin{tabular}{|c|c|c|}
\hline
  &$a_q{-}1$ is no transit point of $\Gamma$ & $a_q{-}1$ is a transit point of $\Gamma$\\[2pt]
\hline
%$S_1(\Gamma)$  & 0 & $-\sgn_i(\Gamma)\theta_{\omega_i}\Bigl(H_{N+e_{L_1(\Gamma)}}\Bigr)$\\
$S_1(\Gamma)$  & 0 & $-\sgn_i(\Gamma)\,\theta_{\omega_i}\bigl(\H_{N+L_1(\Gamma)}\bigr)$\\[3pt]
\hline
$S_2(\Gamma)$  &$-\sgn_i(\Gamma)\,\theta_{\omega_i}\bigl(\H_{N+L_2(\Gamma)}\bigr)$&0\\[3pt]
\hline
$S_3(\Gamma)$  & \;\;\;$\sgn_i(\Gamma)\,\theta_{\omega_i}\bigl(\H_{N+L_3(\Gamma)}\bigr)$ & \;\;\;$\sgn_i(\Gamma)\,\theta_{\omega_i}\bigl(\H_{N+L_3(\Gamma)}\bigr)$\\[3pt]
\hline
\multicolumn{3}{c}{\footnotesize Table 4}
\end{tabular}}

\medskip

\noindent
In each row of this table (except the top row), the element of $\U$ in the left column
equals either the element in the middle column or the element in the right column depending
on which condition in the top row of this table is satisfied for $\Gamma$.

%\bigskip

To verify this table, one should note that in the present case
$l<i$, $\Gamma^{l+1}=1$ (since $l+1=a_q$ is a source of $\Gamma$) and
$\Gamma^l=0$ if $a_q-1$ is no transit point of $\Gamma$ and
$\Gamma^l=1$ otherwise.
By Lemmas~\ref{lemma:L1}--\ref{lemma:L3}, every nonzero cell of this table
(not in the top row and not in the left column)
that is not the bottom rightmost
yields a flow of $\F_i(a_1,\ldots,a_{q-1},a_q-1;b_1,\ldots,b_q)$ as the second summand in the subscript of $\H$.
Therefore, applying Lemmas~\ref{lemma:L1}--\ref{lemma:L3}
and Corollary~\ref{corollary:lcwman:1}, we obtain
$$
\begin{array}{l}
\displaystyle
y\(\sum\nolimits_{N\in\UT(n)}F^{(N)}\H_N\)=\\[12pt]
\displaystyle
\sum\nolimits_{N\in\UT(n)}F^{(N)}\biggl(\sum\Bigl\{\sgn_i\bigl(L_k(\Gamma)\bigr)\,\theta_{\omega_i}\bigl(\H_{N+L_k(\Gamma)}\bigr)\,\Bigl|\,\Gamma\in\F\; \& \;k=1,2,3\\[10pt]
\&\; L_k(\Gamma)\mbox{ is well-defined and a flow}\Bigr\}+\\[6pt]
\displaystyle\sum\Bigl\{\sgn_i(\Gamma)\,\theta_{\omega_i}\bigl(\H_{N+L_3(\Gamma)}\bigr)\,\Bigl|\,
\Gamma\in\F \; \& \; a_q{-}1\mbox{ is a transit point of }\Gamma
\Bigr\}\biggr)=\\[6pt]
\end{array}
$$
$$
\begin{array}{l}
\displaystyle
\sum\nolimits_{N\in\UT(n)}F^{(N)}\biggl(
\sum\nolimits_{\Gamma'\in\F_i(a_1,\ldots,a_{q-1},a_q-1;b_1,\ldots,b_q)}
\sgn_i(\Gamma')\,\theta_{\omega_i}(\H_{N+\Gamma'})+\\[12pt]
%\displaystyle
%\sum\nolimits_{\Gamma\in F \; \& \; \text{$a_q{-}1$ is a transit point of $\Gamma$}}
%\sgn_i(\Gamma)\theta_{\omega_i}\Bigl(H_{N+e_{L_3(\Gamma)}}\Bigr)
%
\displaystyle\sum\Bigl\{\sgn_i(\Gamma)\,\theta_{\omega_i}\bigl(\H_{N+L_3(\Gamma)}\bigr)\,\Bigl|\,
\Gamma\in\F \; \& \; a_q{-}1\mbox{ is a transit point of }\Gamma
\Bigr\}
\biggr).
\end{array}
$$
By Lemma~\ref{lemma:L3}, the last summand of the above formula equals zero.
Thus we have proved the current lemma for the sequences
\begin{equation}\label{equation:lemma:lcwman:4:case:1}
{a_q{-}1},i^{(q)}_1,{\ldots},i^{(q)}_{k_q};\quad
i^{(q-1)}_1,\ldots,i^{(q-1)}_{k_{q-1}};\quad\ldots\quad;\quad
i^{(1)}_1,\ldots,i^{(1)}_{k_1}
\end{equation}
satisfying conditions $\O(a_q-1,i,b_q-1)$, $\O(a_{q-1},i,b_{q-1}-1)$, \ldots, $\O(a_1,i,b_1-1)$ respectively.

{\it Case 2: $a_q<i$, $i+1<b_q$ if $q\ge1$ and $l=i$.}
The values of $S_1(\Gamma)$, $S_2(\Gamma)$ and $S_3(\Gamma)$
are given in the table below.

\bigskip
{
\extrarowheight=2pt
\begin{tabular}{|c|c|c|}
\hline
  &$i$ is no transit point of $\Gamma$ & $i$ is a transit point of $\Gamma$\\
\hline
$S_1(\Gamma)$  & $0$ & $-\sgn_i(\Gamma)\,\theta_{\omega_i}\bigl(\H_{N+M_1(\Gamma)}\bigr)$\\[3pt]
\hline
$S_2(\Gamma)$  &$-\sgn_i(\Gamma)\,\theta_{\omega_i}\bigl(\H_{N+M_2(\Gamma)}\bigr)$& $0$\\[3pt]
\hline
$S_3(\Gamma)$  & $0$ & $0$\\
\hline
\multicolumn{3}{c}{\footnotesize Table 5}
\end{tabular}}

\noindent
This table should be interpreted in the same way as Table~4.

To verify this table, one should note that in the present case
$\Gamma_{l+1,t}=0$ for any $t\in\Z$, $\Gamma^{l+1}=0$,
$\Gamma^l=0$ if $i$ is no transit point of $\Gamma$ and
$\Gamma^l=1$ otherwise. Therefore, applying
Lemmas~\ref{lemma:M1} and \ref{lemma:M2}
and Corollary~\ref{corollary:lcwman:2}, we obtain
$$
\begin{array}{l}
\displaystyle
y\(\sum\nolimits_{N\in\UT(n)}F^{(N)}\H_N\)=\\[12pt]
\displaystyle
\sum\nolimits_{N\in\UT(n)}F^{(N)}\sum\Bigl\{\sgn_i\bigl(M_k(\Gamma)\bigr)\,\theta_{\omega_i}\bigl(\H_{N+M_k(\Gamma)}\bigr)\,\Bigl|\,\Gamma\in\F\; \& \;k=1,2\\[10pt]
\&\; M_k(\Gamma)\mbox{ is well-defined}\Bigr\}=\\[6pt]
\displaystyle
\sum\nolimits_{N\in\UT(n)}F^{(N)}
\sum\nolimits_{\Gamma'\in\F_i(a_1,\ldots,a_q,i;b_1,\ldots,b_q,i+1)}
\sgn_i(\Gamma')\,\theta_{\omega_i}(\H_{N+\Gamma'}).
\end{array}
$$
Thus we have proved the current lemma for the $q+1$ sequences
\begin{equation}\label{equation:lemma:lcwman:4:case:2}
\emptyset;\quad i^{(q)}_1,{\ldots},i^{(q)}_{k_q};\quad
i^{(q-1)}_1,\ldots,i^{(q-1)}_{k_{q-1}};\quad\ldots\quad;\quad
i^{(1)}_1,\ldots,i^{(1)}_{k_1}
\end{equation}
satisfying conditions $\O(i,i,i)$, $\O(a_q,i,b_q-1)$, $\O(a_{q-1},i,b_{q-1}-1)$, \ldots, \linebreak$\O(a_1,i,b_1-1)$ respectively
(see Remark~\ref{remark:lcwman2:1.5}).

{\it Case 3: $q\ge1$, $b_q+1\notin\{b_{q-1},\ldots,b_1\}$ and $l=b_q$.}
We have $i<l$, $\Gamma_{l+1,t}=0$ for any $t\in\Z$,
$\Gamma^{l+1}=0$, $\Gamma^l=0$. Hence
$S_1(\Gamma)=-\sgn_i(\Gamma)\theta_{\omega_i}(\H_{N+R(\Gamma)})$,
$S_2(\Gamma)=0$ and $S_3(\Gamma)=0$.
Therefore, applying Lemma~\ref{lemma:R} and
Corollary~\ref{corollary:lcwman:3}, we obtain
$$
\begin{array}{l}
\displaystyle
y\(\sum\nolimits_{N\in\UT(n)}F^{(N)}\H_N\)=\\[12pt]
\displaystyle\sum\nolimits_{N\in\UT(n)}F^{(N)}\sum\nolimits_{\Gamma\in\F}\sgn_i\bigl(R(\Gamma)\bigr)\theta_{\omega_i}\bigl(\H_{N+R(\Gamma)}\bigr)=\\[10pt]
\displaystyle
\sum\nolimits_{N\in\UT(n)}F^{(N)}
\sum\nolimits_{\Gamma'\in\F_i(a_1,\ldots,a_q;b_1,\ldots,b_{q-1},b_q+1)}
\sgn_i(\Gamma')\theta_{\omega_i}(\H_{N+\Gamma'}).
\end{array}
$$
%Using the cases considered above, we can add one element
%to the sequence in the square brackets in the definition of $x$.
%pass from
%the sequence representing the argument of $x$
%to a similar sequence one element longer.

Thus we have proved the current lemma for the sequences
\begin{equation}\label{equation:lemma:lcwman:4:case:3}
b_q,i^{(q)}_1,{\ldots},i^{(q)}_{k_q};\quad
i^{(q-1)}_1,\ldots,i^{(q-1)}_{k_{q-1}};\quad\ldots\quad;\quad
i^{(1)}_1,\ldots,i^{(1)}_{k_1}
\end{equation}
satisfying conditions $\O(a_q,i,b_q)$,\, $\O(a_{q-1},i,b_{q-1}-1)$,\; \ldots,\; $\O(a_1,i,b_1-1)$ \linebreak respectively.

To finish the proof, it suffices to notice that any sequence of sequences in the hypothesis
of the current lemma can be obtained by passing from~(\ref{equation:lcwman:1.625})
to~(\ref{equation:lemma:lcwman:4:case:1}) or~(\ref{equation:lemma:lcwman:4:case:2})
or~(\ref{equation:lemma:lcwman:4:case:3}).
\end{proof}

\begin{definition}\label{definition:xi}\!
It follows from Lemma~\ref{lemma:lcwman:4} that the operator\!
$\left\llbracket i^{(q)}_1,{\ldots},i^{(q)}_{k_q},i,\right.$\!\linebreak
$\left.i^{(q-1)}_1,{\ldots},i^{(q-1)}_{k_{q-1}},i,{\ldots},
 i^{(1)}_1,{\ldots},i^{(1)}_{k_1},i\right\rrbracket_i$ depends only
on the integers $a_1,{\ldots},a_q$,\linebreak
$b_1,\ldots,b_q$ and $i$.
We, therefore, denote it by $\xi_i(a_1,\ldots,a_q;b_1,\ldots,b_q)$.
\end{definition}

%\bigskip
%
%\hrule
%
%\bigskip

\subsection{Tableaux}\label{Tableaux}
A {\it composition} of length $n$
is a sequence $\lm=(\lm_1,\ldots,\lm_n)$ such that $\lm_1,\ldots,\lm_n\in\Z^+$.
If, moreover, $\lm_1\ge\cdots\ge\lm_n$ then $\lm$ is called a {\it partition}.
%Let $\Lambda(n)$ denote the set of all compositions of length $n$.
The {\it diagram} of a composition $\lm$ is the set
$$
[\lm]=\{(i,j)\in\Z^2\|1\le i\le n\mbox{ and }1\le j\le \lm_i\}.\label{diagram}
$$
We shall think of $[\lm]$ as an array of boxes.
For example, if $\lm{=}(5,3,2,0)$ then

{
\begin{center}
\setlength{\unitlength}{0.4mm}
\begin{picture}(45,0)
\put(0,0){\line(1,0){50}}
\put(0,-10){\line(1,0){50}}
\put(0,-20){\line(1,0){30}}
\put(0,-30){\line(1,0){20}}
\put(0,0){\line(0,-1){30}}
\put(10,0){\line(0,-1){30}}
\put(20,0){\line(0,-1){30}}
\put(30,0){\line(0,-1){20}}
\put(40,0){\line(0,-1){10}}
\put(50,0){\line(0,-1){10}}
\put(-25,-17){$[\lambda]=$}
\end{picture}
\end{center}
}

\vspace{1.3cm}

A {\it $\lm$-tableau}, where $\lm$ is a composition, is a function $T:[\lm]\to\{1,\ldots,n\}$,
which we regard as the diagram $[\lm]$ filled with
integers in $\{1,\ldots,n\}$.
%
%We say that $T$ {\it has weight}
%$(\mu_1,\ldots,\mu_n)$ if $k$ appears $\mu_k$ times in $T$
%for any $k=1,\ldots,n$.
%
A $\lm$-tableau $T$ is called {\it row standard} if its entries weakly
increase along the rows, that is $T(i,j)\le T(i,j')$ if $j<j'$.
A $\lm$-tableau $T$ is called {\it regular row standard} if it is
row standard and every entry in row $i$ of $T$ is at least~$i$.
Finally, if $\lm$ is partition, then a $\lm$-tableau $T$
is called {\it standard} if it is row standard
and its entries strictly increase down the columns,
that is $T(i,j)<T(i',j)$ if $i<i'$. For example,

{
\begin{center}
\setlength{\unitlength}{0.4mm}
\begin{picture}(45,0)
\put(0,0){\line(1,0){50}}
\put(0,-10){\line(1,0){50}}
\put(0,-20){\line(1,0){30}}
\put(0,-30){\line(1,0){20}}
\put(0,0){\line(0,-1){30}}
\put(10,0){\line(0,-1){30}}
\put(20,0){\line(0,-1){30}}
\put(30,0){\line(0,-1){20}}
\put(40,0){\line(0,-1){10}}
\put(50,0){\line(0,-1){10}}
\put(-25,-17){$T=$}
\put(3,-8.5){1} \put(13,-8.5){2} \put(23,-8.5){2} \put(33,-8.5){3}\put(43,-8.5){4}
\put(3,-18.5){2}\put(13,-18.5){3}\put(23,-18.5){3}
\put(3,-28.5){3}\put(13,-28.5){4}
\end{picture}
\end{center}
}

\vspace{1.3cm}

\noindent
is a standard $(5,3,2,0)$-tableau. %of weight $(1,3,4,2)$.
Here $n=4$.

For any regular row standard $\lm$-tableau $T$, we define $\N_T$\label{NT}
to be the matrix of $\UT(n)$ whose $(i,j)$-entry, where $i<j$, is the number
of entries $j$ in row $i$ of $T$ and set $F_T:=F^{(\N_T)}$.
For $n=4$ and $T$ as in the above example, we have
$$
\N_T{=}\!\left(
\begin{array}{llll}
0&2&1&1\\
0&0&2&0\\
0&0&0&1\\
0&0&0&0\\
\end{array}
\right)\!\;
\mbox{ and }\;
%F_T=X_{-\alpha_1,2}\,X_{-\alpha_1-\alpha_2}\,X_{-\alpha_2,2}\,X_{-\alpha_1-\alpha_2-\alpha_3}\,X_{-\alpha_3}.
 F_T=E_{2,1}^{(2)}\,  E_{3,1}\,               E_{3,2}^{(2)}\,  E_{4,1}\,                        E_{4,3}.
$$

We say that a composition $\lm=(\lm_1,\ldots,\lm_n)$
is {\it coherent} with a weight $c_1\omega_1+\cdots+c_{n-1}\omega_{n-1}$ if
$c_i=\lm_i-\lm_{i+1}$ for any $i=1,\ldots,n-1$.

\begin{proposition}[\cite{Carter_Lusztig}]\label{basis of Weyl module}
Let $\omega$ be a dominant weight
and $\lm$ be any
%composition
partition coherent with $\omega$.
Then the vectors $F_Te^+_\omega$, where $T$ is a standard $\lm$-tableau,
form a basis of $\Delta(\omega)$.
\end{proposition}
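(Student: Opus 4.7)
The plan is to combine a spanning argument, based on the PBW basis of $\U^-$ together with a straightening procedure, with a dimension count via Weyl's character formula. By Proposition~\ref{proposition:lcwman2:0} the products $F^{(N)}$ for $N\in\UT(n)$ form a basis of $\U^-$, so their images $F^{(N)}e^+_\omega$ span $\Delta(\omega)=\U^-e^+_\omega$. I would identify each matrix $N\in\UT(n)$ with the row standard $\lm$-tableau whose row $i$ contains $N_{i,j}$ copies of the entry $j$ for each $j>i$ together with $\lm_i-\sum_{j>i}N_{i,j}$ copies of $i$. When this last count is negative, i.e.\ when $\sum_{j>i}N_{i,j}>\lm_i$ for some $i$, one shows $F^{(N)}e^+_\omega=0$ by repeatedly applying the highest-weight annihilation relations $E_{i+1,i}^{(m)}e^+_\omega=0$ for $m>\lm_i-\lm_{i+1}$ and the commutator identities in~(\ref{equation:lcwman:0.5}). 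This already cuts the spanning set down to $\{F_Te^+_\omega \mid T$ row standard $\lm$-tableau$\}$.

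Next I would run the straightening (Garnir) rule to reduce further to standard tableaux. For a row standard $\lm$-tableau $T$ that fails column-strictness at some column $j$ between rows $i$ and $i+1$, the Garnir relation supported on the two-row shape obtained by overlapping rows $i,i+1$ at and to the right of column $j$ can be derived directly from the $\mathfrak{sl}_2$-triple $(E_{i,i+1},H_i,E_{i+1,i})$ by repeated application of~(\ref{equation:lcwman:0.5}) and expresses $F_Te^+_\omega$ as a $\K$-linear combination of $F_{T'}e^+_\omega$, where each $T'$ is row standard and strictly smaller than $T$ in a suitable total order on row standard $\lm$-tableaux (for instance, the colexicographic order on the column word). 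Induction on this order terminates at standard $\lm$-tableaux, which therefore span.

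For linear independence I would invoke Weyl's character formula, which determines $\ch\Delta(\omega)$ independently of $\mathop{\rm char}\K$. In type $A$ this yields
\begin{equation*}
\dim\Delta(\omega)=\sum_{\tau}K_{\lm,\tau},
\end{equation*}
where $K_{\lm,\tau}$ is the number of semistandard $\lm$-tableaux of content $\tau$ with entries in $\{1,\ldots,n\}$. The total of these Kostka numbers is precisely the number of standard $\lm$-tableaux in the sense of Section~\ref{Tableaux}, so the spanning family exhibited above has cardinality equal to $\dim\Delta(\omega)$ and is therefore a basis.

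The main obstacle is establishing the straightening rule cleanly at the level of the hyperalgebra: coefficients arising from commuting divided powers $E_{b,a}^{(m)}$ past each other involve binomial coefficients that can collapse modulo $p$, so one must be careful to order the tableaux so that every Garnir summand produced is strictly smaller, whether or not its scalar is invertible. The cleanest route is to carry out the whole computation in the Kostant $\Z$-form $\U_\Z$, where the straightening is a combinatorial identity not a linear-algebraic miracle, and only specialise to $\K$ at the end; the dimension count then transfers verbatim from characteristic $0$.
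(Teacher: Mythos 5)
The paper does not give its own proof of this proposition; it is quoted from Carter--Lusztig~\cite{Carter_Lusztig} and used as a black box, so there is no in-paper argument to compare against. Your sketch is a correct reconstruction of the argument standard in the literature: span by PBW, cut down by straightening to standard tableaux, and match dimensions against Weyl's character formula, with the whole straightening done integrally in $\U_\Z$ so it specializes to any $\K$. That is essentially what Carter--Lusztig (and later expositions) do, and you correctly locate the real difficulty, namely running the Garnir relations at the level of divided powers without relying on invertibility of the scalars.

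One step that deserves more care than your phrasing suggests is the first reduction, the vanishing of $F^{(N)}e^+_\omega$ when some row sum $\sum_{j>i}N_{i,j}$ exceeds $\lambda_i$. This is not a direct consequence of the simple-root annihilation relations $E_{i+1,i}^{(m)}e^+_\omega=0$, $m>\lambda_i-\lambda_{i+1}$, alone via the commutator identities~(\ref{equation:lcwman:0.5}): the obvious commutator manipulations do not obviously funnel the product into a single long divided power of a simple negative root. In practice this reduction is itself a Garnir-type identity of the same species as the column-strictness straightening in your second paragraph, and in Carter--Lusztig it is absorbed into the general straightening apparatus (they work with bideterminants in a polynomial algebra, where both reductions are instances of the same combinatorial relation). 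Your plan is sound; just do not expect the first reduction to come for free from the $\mathfrak{sl}_2$-relations while the second needs the full machinery.
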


To deal with
%the weights of tableaux,
compositions,
it is convenient to introduce the
following sequences of length $n$: $\epsilon_i:=(0,\ldots,0,1,0,\ldots,0)$
with $1$ at position $i$ and $\alpha(i,j)=\epsilon_i-\epsilon_j$.
Here and in what follows, the sequences of $\Z^n$ are added
and subtracted componentwise.

%Стандартную по строкам таблицу $t$, у которой в строке $k$ стоят числа
%не меньшие $k$ для любого $k=1,\ldots,n$, будем называть
%{\it правильной стандартной по строкам} таблицей.

For the rest of Sections~\ref{Tableaux} -- \ref{Comparison of flows},
we fix integers $a_1,\ldots,a_q,b_1,\ldots,b_q,i$ such that
\begin{equation}\label{equation:lcwman:2}
1\le a_1<a_2<\cdots<a_q\le i<b_q<\cdots<b_2<b_1\le n.
\end{equation}

%%>Такое можно где-нибудь во введении.
% A row standard tableau $t$ whose every entry in row $i$ is at least $i$
% is called a {\it regular row standard tableau}.
%%>
Let $T$ be a regular row standard $\lm$-tableau and
%%>\lm --- элемент из \Lm(n). Написать это где-нибудь.
$\Gamma$ be a flow of $\F_i(a_1,{\ldots},a_q;$\linebreak
$b_1,{\ldots},b_q)$.
We want to construct the regular row standard tableau $\sigma_{\Gamma,i}(T)$\label{sigma} so that
{
\renewcommand{\labelenumi}{{\bf\theenumi}}
\renewcommand{\theenumi}{($\sigma$-\arabic{enumi})}
\begin{enumerate}
    \item\label{property:sigma-1} $\N_T-\Gamma=\N_{\sigma_{\Gamma,i}(T)}$;
    \item\label{property:sigma-2} $\sigma_{\Gamma,i}(T)$ is a $(\lm_1,\ldots,\lm_i,\lm_{i+1}+1,\ldots,\lm_n+1)$-tableau.
%    \item $\sigma_{\Gamma,i}(T)$ has weight equal to the weight of $T$ plus $\sum_{k=1}^q\alpha(a_k,b_k)$ plus $\sum_{k=i+1}^n\epsilon_k$;
\end{enumerate}}
\noindent
We obtain $\sigma_{\Gamma,i}(T)$ from $T$ as follows
\begin{enumerate}
\item\label{transformation:1}
%      for any edge $(s,t)$ of $\Gamma$, we remove one entry $t$
%      from row $s$ and add one entry $s$ to row $s$;
      for every edge $(s,t)$ of $\Gamma$, we replace one $t$
      with $s$ in row $s$;
\item\label{transformation:2}
      to every row $k=i+1,\ldots,n$, we add one $k$;
\item\label{transformation:3} in the resulting tableau,
      we order the elements in rows to obtain a row standard tableau
      (automatically regular).
\end{enumerate}
One can see that~\ref{transformation:1}
ensures~\ref{property:sigma-1} and~\ref{transformation:2} ensures~\ref{property:sigma-2},
while~\ref{transformation:3} makes the tableau row standard and eliminates the uncertainty
stemming from the choice of the elements being removed and added in
steps~\ref{transformation:1} and~\ref{transformation:2}.

Obviously, it is not always possible to executed all replacements in
step~\ref{transformation:1}, in which case $\sigma_{\Gamma,i}(T)$ is not well-defined.
This happens if and only if for some edge $(s,t)$ of $\Gamma$,
row $s$ of $T$ does not contain $t$ as an entry, i.e. if and only if
$\Gamma_{s,t}=1$ and $(\N_T)_{s,t}=0$ for some $s$ and $t$.
Thus we have proved the following property:

{
\renewcommand{\labelenumi}{{\bf\theenumi}}
\renewcommand{\theenumi}{($\sigma$-\arabic{enumi})}
\begin{enumerate}
\setcounter{enumi}{2}
\item\label{property:sigma-3}
      $\sigma_{\Gamma,i}(T)$ is well-defined if and only if
      all entries of $\N_T-\Gamma$ are nonnegative.
\end{enumerate}}

%Clearly, $\sigma_{\Gamma,i}(T)$ is a
%$(\lm_1,\ldots,\lm_i,\lm_{i+1}+1,\ldots,\lm_n+1)$-tableau whose
%weight is the weight of $T$ plus $\sum_{k=1}^q\alpha(a_k,b_k)$
%plus $\sum_{k=i+1}^n\epsilon_k$.

For example, take $n=4$, $i=2$,
%$\Gamma\in\F_2(1,2;3,4)$
$\Gamma\in\F_2(1,2;4,3)$
 with edges $(1,4)$, $(2,3)$
and $T$ as in the above example. Then

{
\begin{center}
\setlength{\unitlength}{0.4mm}
\begin{picture}(150,0)
\put(0,0){\line(1,0){50}}
\put(0,-10){\line(1,0){50}}
\put(0,-20){\line(1,0){30}}
\put(0,-30){\line(1,0){20}}
\put(0,0){\line(0,-1){30}}
\put(10,0){\line(0,-1){30}}
\put(20,0){\line(0,-1){30}}
\put(30,0){\line(0,-1){20}}
\put(40,0){\line(0,-1){10}}
\put(50,0){\line(0,-1){10}}
\put(-25,-17){$T=$}
\put(3,-8.5){1} \put(13,-8.5){2} \put(23,-8.5){2} \put(33,-8.5){3}\put(43,-8.5){4}
\put(3,-18.5){2}\put(13,-18.5){3}\put(23,-18.5){3}
\put(3,-28.5){3}\put(13,-28.5){4}
\put(53,-15){\vector(1,0){37}}
\put(59,-12){\footnotesize step (i)}
\end{picture}

\vspace{-13pt}

\begin{picture}(-38,0)
\put(0,0){\line(1,0){50}}
\put(0,-10){\line(1,0){50}}
\put(0,-20){\line(1,0){30}}
\put(0,-30){\line(1,0){20}}
\put(0,0){\line(0,-1){30}}
\put(10,0){\line(0,-1){30}}
\put(20,0){\line(0,-1){30}}
\put(30,0){\line(0,-1){20}}
\put(40,0){\line(0,-1){10}}
\put(50,0){\line(0,-1){10}}
\put(3,-8.5){1} \put(13,-8.5){2} \put(23,-8.5){2} \put(33,-8.5){3}\put(43,-8.5){1}
\put(3,-18.5){2}\put(13,-18.5){2}\put(23,-18.5){3}
\put(3,-28.5){3}\put(13,-28.5){4}
\put(53,-15){\vector(1,0){34}}
\put(57,-12){\footnotesize step (ii)}
\end{picture}
\end{center}
%}
%
%\newpage
%
%{

\vspace{33pt}

\begin{center}
\setlength{\unitlength}{0.4mm}

\begin{picture}(150,0)
\put(0,0){\line(1,0){50}}
\put(0,-10){\line(1,0){50}}
\put(0,-20){\line(1,0){30}}
\put(0,-30){\line(1,0){30}}
\put(0,-40){\line(1,0){10}}
\put(0,0){\line(0,-1){40}}
\put(10,0){\line(0,-1){40}}
\put(20,0){\line(0,-1){30}}
\put(30,0){\line(0,-1){30}}
\put(40,0){\line(0,-1){10}}
\put(50,0){\line(0,-1){10}}
\put(3,-8.5){1} \put(13,-8.5){2} \put(23,-8.5){2} \put(33,-8.5){3}\put(43,-8.5){1}
\put(3,-18.5){2}\put(13,-18.5){2}\put(23,-18.5){3}
\put(3,-28.5){3}\put(13,-28.5){4}\put(23,-28.5){3}
\put(3,-38.5){4}
\put(53,-20){\vector(1,0){37}}
\put(57,-17){\footnotesize step (iii)}
\end{picture}

\vspace{-13pt}

\begin{picture}(-38,0)
\put(0,0){\line(1,0){50}}
\put(0,-10){\line(1,0){50}}
\put(0,-20){\line(1,0){30}}
\put(0,-30){\line(1,0){30}}
\put(0,-40){\line(1,0){10}}
\put(0,0){\line(0,-1){40}}
\put(10,0){\line(0,-1){40}}
\put(20,0){\line(0,-1){30}}
\put(30,0){\line(0,-1){30}}
\put(40,0){\line(0,-1){10}}
\put(50,0){\line(0,-1){10}}
\put(58,-17){$=\sigma_{\Gamma,i}(T)$}
\put(3,-8.5){1} \put(13,-8.5){1} \put(23,-8.5){2} \put(33,-8.5){2}\put(43,-8.5){3}
\put(3,-18.5){2}\put(13,-18.5){2}\put(23,-18.5){3}
\put(3,-28.5){3}\put(13,-28.5){3}\put(23,-28.5){4}
\put(3,-38.5){4}
\end{picture}
\end{center}
}

\vspace{2cm}

%\noindent
%We leave to the reader to check~\ref{property:sigma-1} and~\ref{property:sigma-2}
%for this example.
\noindent On the other hand, if we take $\Gamma'\in\F_2(2;4)$ with edge $(2,4)$,
then $\sigma_{\Gamma',i}(T)$ is not well-defined, since there is no $4$ in row $2$ of $T$
and step~\ref{transformation:1} can not be executed. We also have $(\N_T-\Gamma')_{2,4}=-1$
in accordance with~\ref{property:sigma-3}.

%We have introduced the operator $\sigma_{\Gamma,i}$ in view of
%the following obvious corollary to Lemma~\ref{lemma:lcwman:4}.

\begin{corollary}\label{corollary:lcwman:4}${}$\hspace{-5pt}
%Пусть $T$ --- правильная стандартная по строкам таблица
%и $1\le a\le i\le b<n$.
Let $T$ be a regular row standard $\lm$-tableau.
Then
$\xi_i(a_1,{\ldots},a_q;$\linebreak $b_1,\ldots,b_q)(F_T)$ is the sum
of $\sgn_i(\Gamma)F_{\sigma_{\Gamma,i}(T)}$ over all
flows $\Gamma\in\F_i(a_1,{\ldots},a_q;$\linebreak
$b_1,{\ldots},b_q)$ such that $\sigma_{\Gamma,i}(T)$ is well-defined.
\end{corollary}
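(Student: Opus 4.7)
The plan is to apply Lemma~\ref{lemma:lcwman:4} with a one-term specialization of the coefficients $\H_N$. Specifically, I would take $\H_{\N_T} = 1_{\U^0}$ and $\H_N = 0$ for all $N \neq \N_T$. This is admissible because the hypothesis of Lemma~\ref{lemma:lcwman:4} only requires that $\H_N$ be nonzero for finitely many $N \in \UT(n)$. With this choice, the input to $\xi_i$ simplifies to $F^{(\N_T)} = F_T$, so the left-hand side of the lemma reads $\xi_i(a_1,\ldots,a_q;b_1,\ldots,b_q)(F_T)$ by Definition~\ref{definition:xi}.

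Next I would unpack the right-hand side of Lemma~\ref{lemma:lcwman:4}. Each term $\H_{N+\Gamma}$ is a scalar multiple of $1_{\U^0}$, equal to $1$ precisely when $N + \Gamma = \N_T$ and to $0$ otherwise. Since $\theta_{\omega_i}$ is a $\K$-algebra automorphism of $\U^{-,0}$ by~\ref{property:theta-1}, it fixes $1_{\U^0}$; hence $\theta_{\omega_i}(\H_{N+\Gamma}) = \delta_{N+\Gamma,\,\N_T}\cdot 1$. Collapsing the sum over $N$ by this constraint leaves
$$
\xi_i(a_1,\ldots,a_q;b_1,\ldots,b_q)(F_T) = \sum_{\Gamma \in \F_i(a_1,\ldots,a_q;b_1,\ldots,b_q)} \sgn_i(\Gamma)\, F^{(\N_T - \Gamma)},
$$
where $\Gamma$ ranges over all flows in the given set (without any nonnegativity restriction yet).

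The final step is to identify which terms are actually nonzero and to rewrite them as $F_{\sigma_{\Gamma,i}(T)}$. By the convention $E_{b,a}^{(m)} = 0$ for $m<0$ adopted in Section~\ref{Flows} (and used crucially in the definition of $F^{(N)}$), the term $F^{(\N_T - \Gamma)}$ vanishes whenever $\N_T - \Gamma$ has any negative entry. By~\ref{property:sigma-3}, the complementary condition, that all entries of $\N_T - \Gamma$ are nonnegative, is exactly the condition that $\sigma_{\Gamma,i}(T)$ is well-defined; and when it is well-defined, property~\ref{property:sigma-1} gives $\N_T - \Gamma = \N_{\sigma_{\Gamma,i}(T)}$, so that $F^{(\N_T - \Gamma)} = F^{(\N_{\sigma_{\Gamma,i}(T)})} = F_{\sigma_{\Gamma,i}(T)}$. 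Substituting into the displayed equation yields exactly the claim of the corollary.

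Since every input step has already been done in the preceding lemmas, there is no real obstacle here; the only point demanding care is the bookkeeping that conflates $E_{b,a}^{(m)} = 0$ for $m<0$ with the well-definedness of $\sigma_{\Gamma,i}(T)$ through~\ref{property:sigma-3}, which allows flows not meeting the nonnegativity condition to be silently dropped from the sum.
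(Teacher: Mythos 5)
Your proposal is correct and follows essentially the same route as the paper: apply Lemma~\ref{lemma:lcwman:4} with $\H_N=\delta_{N=\N_T}$ (a scalar), observe that $\theta_{\omega_i}$ fixes scalars, collapse the inner sum to obtain $\sum_\Gamma\sgn_i(\Gamma)F^{(\N_T-\Gamma)}$, and then discard the vanishing terms via the convention $E_{b,a}^{(m)}=0$ for $m<0$, identifying the survivors as $F_{\sigma_{\Gamma,i}(T)}$ through~\ref{property:sigma-1} and~\ref{property:sigma-3}. No gap.
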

\begin{proof}
% One can easily see that for any flow
% $\Gamma\in\F_i(a_1,{\ldots},a_q;$\linebreak
% $b_1,{\ldots},b_q)$, the matrix $N_T-e_\Gamma$ belongs to $UT^+(n)$
% %%>определить N_T
% if and only if $\sigma_{\Gamma,i}(T)$ is well-defined.
% In that case, we have $N_T-e_\Gamma=\sigma_{\Gamma,i}(T)$, whence
% $F^{(N_T-e_\Gamma)}=F_{\sigma_{\Gamma,i}(T)}$.
By Lemma~\ref{lemma:lcwman:4}, we have
(recall Definition~\ref{definition:xi})
$$
\begin{array}{l}
\displaystyle
\xi_i(a_1,\ldots,a_q;b_1,\ldots,b_q)(F_T)=\\[6pt]
\displaystyle
\xi_i(a_1,\ldots,a_q;b_1,\ldots,b_q)\(\sum\nolimits_{N\in\UT(n)}F^{(N)}\delta_{N=\N_T}\)=\\[20pt]
\displaystyle
\sum\nolimits_{N\in\UT(n)}F^{(N)}
\sum\nolimits_{\Gamma\in\F_i(a_1,\ldots,a_q;b_1,\ldots,b_q)}
\sgn_i(\Gamma)\delta_{N+\Gamma=\N_T}=\\[20pt]
\displaystyle
\sum\nolimits_{\Gamma\in\F_i(a_1,\ldots,a_q;b_1,\ldots,b_q)}
\sgn_i(\Gamma)F^{(\N_T-\Gamma)}.
\end{array}
$$
Clearly, $F^{(\N_T-\Gamma)}=0$ if at least one entry of $\N_T-\Gamma$
is negative. Therefore, by~\ref{property:sigma-1} and~\ref{property:sigma-3},
we obtain
$$
\begin{array}{l}
\xi_i(a_1,\ldots,a_q;b_1,\ldots,b_q)(F_T)=\\[10pt]
\displaystyle\sum\Bigl\{\sgn_i(\Gamma)F^{(\N_T-\Gamma)}\|\Gamma\in\F_i(a_1,\ldots,a_q;b_1,\ldots,b_q)\,\&\\[6pt]
\;\;\quad\quad\quad\quad\quad\quad\quad\quad\quad\;\;\mbox{ all entries of }\N_T-\Gamma\mbox{ are nonnegative}\Bigr\}=\\[6pt]
\displaystyle\sum\Bigl\{\sgn_i(\Gamma)F_{\sigma_{\Gamma,i}(T)}\|\Gamma\,{\in}\,\F_i(a_1,{\ldots},a_q;b_1,{\ldots},b_q)\,\&\,%\\[6pt]
%\;\;\quad\quad\quad\quad\quad\quad\quad\quad\quad
\sigma_{\Gamma,i}(T)\!\mbox{ is well-defined}\Bigr\}.
\!\!\!\!
\end{array}
$$

\end{proof}

\subsection{Comparison of tableaux}\label{Comparison of tableaux}
We make the following convention on orders. The symbol $\le$ (symbol $<$)
will always denote a nonstrict (resp. strict) partial order on a set $S$,
i.e. a transitive binary relation of $S$
such that $x\le x$ and $x\le y\,\&\,y\le x\,{\Rightarrow}\,x=y$
for any $x,y\in S$ (resp. $x<y\,\&\,y<x$ for no $x,y\in S$).
We also assume that for any $x,y\in S$, there holds
$$
{\arraycolsep=1pt
\begin{array}{rcll}
x\le y\Leftrightarrow x<y&\mbox{ or }&x=y,    &\quad\quad\quad\quad x\le y\Leftrightarrow y\ge x, \\[6pt]
x<y\Leftrightarrow x\le y&\&         &x\ne y, &\quad\quad\quad\quad x<y\Leftrightarrow y>x.
\end{array}}
$$
Consider two sequences $x=(x_1,\ldots,x_m)$ and $y=(y_1,\ldots,y_m)$
of elements of $S$.
% We say that $x$ {\it is smaller than} $y$
% {\it in the lexicographic order} if there exists $i=1,\ldots,m$
% such that $x_j=y_j$ for $1\le j<i$ but $x_i<y_i$.
We say that
%$x<y$
$x$ {\it is smaller than} $y$
{\it in the antilexicographic
order} if there exists $i=1,\ldots,m$ such that $x_i<y_i$ and $x_j=y_j$ for $i<j\le m$.
%%>понятие lexicographic order было удалено из-за того, что
%%>в том месте, где оно применяется сравниваются наборы разной длины
%%>и это приводит к усложнению определений.

The set of all compositions of length $n$ is endowed with
the following partial order called {\it dominance order}.
For two compositions $\lm{=}(\lm_1,{\ldots},\lm_n)$
and $\mu{=}(\mu_1,{\ldots},\mu_n)$, we write $\lm\le\mu$ if
$\sum_{j=1}^i\lm_j{\le}\sum_{j=1}^i\mu_j$ for any $i{=}1,{\ldots},n$.

Let $\mu$ be a composition of length $n$ and $Q$ be a $\mu$-tableau.
Then we set $\shape(Q):=\mu$\label{shape}. Given a composition $\lm$ of length $n$,
a row standard $\lm$-tableau $T$ and an integer $m=1,\ldots,n$,
we denote by $T[m]$\label{Tm} the tableau obtained from $T$ by removing
all nodes with entry greater than $m$. Clearly, $T[m]$ is a $\mu$-tableau
for some composition $\mu$ of length $n$. Therefore, we define
$$
\chain(T):=\bigl(\shape(T[1]),\shape(T[2]),\ldots,\shape(T[n])\bigr).\label{chain}
$$
For another row standard $\lm$-tableau $S$, we write $T<S$ if
$\chain(T)$ is smaller than $\chain(S)$ in the antilexicographic
order with dominance order on components.
We also refer the reader to~\cite[Section~2.2]{Kleshchev_gjs11}
for a description of this order.

It is convenient to define the sum of row standard tableaux $S$ and $T$
as the row standard tableau $S+T$, whose $i$th row is the result of gluing
and reordering the $i$th rows of $S$ and $T$.
Obviously, we have
\begin{equation}\label{equation:lcwman:2.5}
(S+T)[m]=S[m]+T[m],\quad\shape(S+T)=\shape(S)+\shape(T).
\end{equation}

{\it Example}. Let $n=5$. Then we have

{
\begin{center}
\setlength{\unitlength}{0.4mm}
\begin{picture}(45,0)
\put(0,0){\line(1,0){40}}
\put(0,-10){\line(1,0){40}}
\put(0,-20){\line(1,0){30}}
\put(0,-30){\line(1,0){10}}
\put(0,-40){\line(1,0){10}}
\put(0,0){\line(0,-1){40}}
\put(10,0){\line(0,-1){40}}
\put(20,0){\line(0,-1){20}}
\put(30,0){\line(0,-1){20}}
\put(40,0){\line(0,-1){10}}
%\put(50,0){\line(0,-1){10}}
%\put(-25,-17){$T=$}
\put(3,-8.5){1} \put(13,-8.5){2} \put(23,-8.5){4} \put(33,-8.5){5}%\put(43,-8.5){4}
\put(3,-18.5){2}\put(13,-18.5){2}\put(23,-18.5){4}
\put(3,-28.5){5}%\put(13,-28.5){4}
\put(3,-38.5){5}
\put(45,-22){$+$}
\end{picture}
\hspace{4mm}
\begin{picture}(45,0)
\put(0,0){\line(1,0){10}}
\put(0,-10){\line(1,0){20}}
\put(0,-20){\line(1,0){20}}
\put(0,-30){\line(1,0){20}}
%\put(0,-40){\line(1,0){10}}
\put(0,0){\line(0,-1){30}}
\put(10,0){\line(0,-1){30}}
\put(20,-10){\line(0,-1){20}}
%\put(30,0){\line(0,-1){20}}
%\put(40,0){\line(0,-1){10}}
%\put(50,0){\line(0,-1){10}}
%\put(-25,-17){$T=$}
\put(3,-8.5){3} %\put(13,-8.5){2} \put(23,-8.5){4} \put(33,-8.5){5}%\put(43,-8.5){4}
\put(3,-18.5){3}\put(13,-18.5){5}%\put(23,-18.5){4}
\put(3,-28.5){4} \put(13,-28.5){5}
%\put(3,-38.5){5}
\put(28,-22){$=$}
\end{picture}
\hspace{-4mm}
\begin{picture}(45,0)
\put(0,0){\line(1,0){50}}
\put(0,-10){\line(1,0){50}}
\put(0,-20){\line(1,0){50}}
\put(0,-30){\line(1,0){30}}
\put(0,-40){\line(1,0){10}}
\put(0,0){\line(0,-1){40}}
\put(10,0){\line(0,-1){40}}
\put(20,0){\line(0,-1){30}}
\put(30,0){\line(0,-1){30}}
\put(40,0){\line(0,-1){20}}
\put(50,0){\line(0,-1){20}}
%\put(50,0){\line(0,-1){10}}
%\put(-25,-17){$T=$}
\put(3,-8.5){1} \put(13,-8.5){2} \put(23,-8.5){3} \put(33,-8.5){4} \put(43,-8.5){5}
\put(3,-18.5){2}\put(13,-18.5){2}\put(23,-18.5){3}\put(33,-18.5){4}\put(43,-18.5){5}
\put(3,-28.5){4}\put(13,-28.5){5}\put(23,-28.5){5}
\put(3,-38.5){5}
\end{picture}
\end{center}
}

\vspace{1.7cm}
\noindent
Applying $\shape$ to each term gives (cf.~(\ref{equation:lcwman:2.5}))
$$
(4,3,1,1,0)+(1,2,2,0,0)=(5,5,3,1,0).
$$

\noindent

To understand how $\chain(T)$ changes when $T$
is replaced with $\sigma_{\Gamma,i}(T)$, we introduce the following
notation. For a flow $\Gamma$ and an integer $m$, we set
$$
\nu_m(\Gamma):=\sum\{\epsilon_s\|(s,t)\mbox{ is an edge of }\Gamma\mbox{ and }s\le m<t\}.\label{nu}
$$

\begin{lemma}\label{lemma:lcwman:5}
Let $\Gamma$ be a flow of
$\F_i(a_1,\ldots,a_q;b_1,\ldots,b_q)$ and $m=1,\ldots,n$.
Then we have
$$
\shape(\sigma_{\Gamma,i}(T)[m])=\shape(T[m])+\nu_m(\Gamma)+
\sum\nolimits_{k=i+1}^m\epsilon_k
$$
if $\sigma_{\Gamma,i}$ is applicable to $T$.
\end{lemma}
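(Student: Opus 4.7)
The plan is to read off the shape of $\sigma_{\Gamma,i}(T)[m]$ directly from the three-step construction of $\sigma_{\Gamma,i}(T)$ recalled after properties~\ref{property:sigma-1}--\ref{property:sigma-2}. For any row standard tableau $U$ and any integer $m$, the $r$th component of $\shape(U[m])$ is simply the number of entries in row $r$ of $U$ that are $\le m$. So I would just track how this count changes in each row under each of the three steps.

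First, consider step~\ref{transformation:1}. For every edge $(s,t)$ of $\Gamma$ one occurrence of $t$ in row $s$ is replaced by $s$; rows other than $s$ are unaffected. Since $s<t$, the only way the count of entries $\le m$ in row $s$ can change is when $s\le m<t$, in which case it increases by exactly $1$. Summing over all edges of $\Gamma$ beginning in row $r$, the total change in row $r$ from step~\ref{transformation:1} equals the $r$th coordinate of $\nu_m(\Gamma)$, by the definition of $\nu_m(\Gamma)$.

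Next, in step~\ref{transformation:2} a new entry $k$ is added to row $k$ for each $k=i+1,\ldots,n$. This increments the count of entries $\le m$ in row $k$ by $1$ precisely when $i+1\le k\le m$, which contributes exactly the $r$th coordinate of $\sum_{k=i+1}^m\epsilon_k$ in row $r$. Step~\ref{transformation:3} merely reorders entries within rows and therefore leaves the shape untouched. Adding the contributions componentwise gives the desired identity. The only mild bookkeeping issue, and the closest thing to an obstacle, is to confirm that the cases $s>m$, $t\le m$ (which cannot occur since $s<t$) and $k>m$ in step~\ref{transformation:2} contribute zero; both checks are immediate from the inequalities, so no genuine difficulty arises.
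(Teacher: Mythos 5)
Your proof is correct and takes essentially the same approach as the paper's: both arguments hinge on the observation that in step~\ref{transformation:1}, replacing $t$ with $s$ in row $s$ increases the count of entries $\le m$ in that row by $1$ exactly when $s\le m<t$, and on the fact that step~\ref{transformation:3} only reorders within rows. The paper organizes this via the decomposition $\sigma_{\Gamma,i}(T)=S+Q$ and equation~(\ref{equation:lcwman:2.5}) rather than tracking the rowwise counts directly, but the underlying combinatorics is identical.
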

\begin{proof}
%Обозначим через $s$ таблицу, полученную из $t$ тем же способом
%что и $\sigma_{M,i}(t)$, но без применения операции~\ref{transformation:2}.
Let $S$ be the tableau obtained from $T$ in the same way as
$\sigma_{\Gamma,i}(T)$ but without step~\ref{transformation:2} and
$Q$ be the $(0^i,1^{n-i})$-tableau having one entry $k$
in each row $k=i+1,\ldots,n$. We clearly have
$\sigma_{\Gamma,i}(T)=S+Q$. It follows from~(\ref{equation:lcwman:2.5})
that $\shape(\sigma_{\Gamma,i}(T)[m])=\shape(S[m])+\shape(Q[m])$.
It is easy to see that $\shape(Q[m])=\sum_{k=i+1}^m\epsilon_k$.

%Разделим каждую строку таблиц $T$ и $S$
%на две части: левую, содержащую элементы $\le m$, и правую
%содержащую элементы $>m$.
Let us divide each row of $T$ and $S$ into the left part containing
entries $\le m$ and the right part containing entries $>m$.
Thus the right parts are deleted and the left parts remain
under $T\mapsto T[m]$ and $S\mapsto S[m]$.

Obviously, the left part of row s of $S$ has different length
than the left part of row s of $T$ if and only if $s\le m<t$
for some edge $(s,t)$ of $\Gamma$ (i.e., in step~\ref{transformation:1}, an entry of the
right part of row $s$ of $T$ is replaced by an entry that will go to the
left part of row $s$ of $S$).
In that case, the left part of row s of $S$
is longer by one than left part of row s of $T$,
which proves $\shape(S[m])=\shape(T[m])+\nu_m(\Gamma)$ and the lemma.
\end{proof}

%\newpage

\begin{proposition}[Straightening rule~\mbox{\cite[Theorem A.4]{Kleshchev_gjs11}}]\label{straightening rule}
If $\lm$ is a partition coherent with a dominant weight $\omega$
and $T$ is a regular row standard but not standard $\lm$-tableau,
then $F_Te^+_\omega$ can be rewritten as a linear combination
of vectors $F_Se^+_\omega$, where $S$ is a standard $\lm$-tableau
such that $S>T$.
\end{proposition}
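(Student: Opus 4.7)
The plan is to invoke Garnir-type relations in $\U^-$ and then induct on the $\chain$-order. Since $T$ is row standard but not column-strict, there must exist a column $j$ and adjacent rows $k,k{+}1$ with $T(k,j)\ge T(k{+}1,j)$. I would choose these parameters as ``late'' as possible --- the largest column index $j$ at which a violation occurs, and among such columns the largest $k$. This extremal choice is what will later pin down which level of $\chain$ first distinguishes $T$ from the tableaux produced by the relation.

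Next, I would produce a Garnir-type identity $\sum_\sigma \pm F_{T^\sigma}\, e^+_\omega = 0$ by the standard Carter--Lusztig trick: take a suitable divided power $E_{k,k+1}^{(M)}$ of a simple raising operator and commute it past the monomial $F_T$ via the formulas~(\ref{equation:lcwman:0.5}), so that after using $E_{k,k+1}^{(M')} e^+_\omega = 0$ for $M'>0$ what remains is an alternating sum in which each $T^\sigma$ is obtained from $T$ by exchanging a sub-multiset of entries between rows $k$ and $k{+}1$ within the columns near the violation. Because everything is done with divided powers from the outset, the coefficient of $F_T e^+_\omega$ itself is $\pm 1$ in any characteristic, so the identity can be solved uniquely for $F_T e^+_\omega$ in terms of the remaining $F_{T^\sigma}e^+_\omega$ with $T^\sigma\ne T$.

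To finish, I would compare $\chain(T^\sigma)$ with $\chain(T)$. Every nontrivial $T^\sigma$ arises by moving certain entries from row $k{+}1$ up to row $k$; let $c^\star$ be the largest such entry moved. A bookkeeping argument modelled on Lemma~\ref{lemma:lcwman:5} shows that $\shape(T^\sigma[c^\star])$ strictly dominates $\shape(T[c^\star])$ (the partial sum $\shape(\cdot)_1+\cdots+\shape(\cdot)_k$ increases by one), while $\shape(T^\sigma[m])=\shape(T[m])$ for all $m>c^\star$, the latter equality coming from the extremal choice of the violation, which forces all exchanged entries to have value $\le c^\star$. Hence $\chain(T^\sigma) > \chain(T)$ in the antilexicographic order, i.e.\ $T^\sigma > T$. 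Since $>$ is well-founded on the finite set of row-standard $\lm$-tableaux, a noetherian induction on $\chain$ reduces the claim to the same straightening problem for each $T^\sigma$, and the process terminates at a linear combination of standard $\lm$-tableaux.

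The principal obstacle is the combinatorial step in the third paragraph: one must verify that the particular Garnir relation chosen really does produce only $T^\sigma$ with $\chain(T^\sigma)>\chain(T)$, and in particular that the extremal violation can be selected so that every entry moved has value $\le c^\star$ and the shape at levels $m>c^\star$ is preserved. Once the divided-power form of the relation is in place, the unit coefficient of $F_T$ is automatic in any characteristic, so the real work is entirely the tableau-combinatorial comparison of $\chain$-sequences.
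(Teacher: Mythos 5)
Note first that Proposition~\ref{straightening rule} is quoted from \cite[Theorem~A.4]{Kleshchev_gjs11}; the present paper gives no proof of it, so there is no internal argument to compare against, and the comments below concern only the soundness of your sketch.

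The strategy --- a Garnir identity produced by commuting a divided power $E_{k,k+1}^{(M)}$ past $F_T$ and killing $e^+_\omega$, followed by noetherian induction on the $\chain$-order --- is the standard one, but the bookkeeping claim in your third paragraph is false, and the ``extremal violation'' device does not rescue it. Take $n\ge4$, $\lambda=(3,3,0,\dots,0)$, and $T$ with first row $1,3,4$ and second row $2,2,3$. Your rule selects the rightmost violated column $j=3$, so the Garnir cells are $(1,3)$ together with all of row~$2$; the shuffle that moves $2$ up and $4$ down gives a tableau $T^\sigma$ with rows $1,2,3$ and $2,3,4$. Here $c^\star=2$, yet the exchanged entry $4$ exceeds $c^\star$, and $\shape(T^\sigma[3])=(3,2,0,\dots)\ne(2,3,0,\dots)=\shape(T[3])$ even though $3>c^\star$: the asserted equality at levels $m>c^\star$ fails. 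What does work, for any violated cell (so the extremal choice is superfluous), is this: if $T$ is row standard with $T(k,j)\ge T(k+1,j)$, then every entry in the Garnir cells $(k,j),\dots,(k,\lambda_k)$ of row~$k$ is $\ge T(k,j)$ and every entry in $(k+1,1),\dots,(k+1,j)$ of row~$k{+}1$ is $\le T(k+1,j)$; hence for the equal-size multisets $X$ leaving row~$k$ and $Y$ leaving row~$k{+}1$ one has $\min X\ge T(k,j)\ge T(k+1,j)\ge\max Y$, so $|\{y\in Y:y\le m\}|\ge|\{x\in X:x\le m\}|$ for every $m$, i.e.\ $\shape(T^\sigma[m])\ge\shape(T[m])$ for all $m$, with strict inequality at some $m$ whenever $T^\sigma\ne T$. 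That estimate, not the $c^\star$-level equality, is what yields $T^\sigma>T$ in the antilexicographic order. Finally, the two points you acknowledge leaving open --- that $F_T$ occurs with coefficient $\pm1$ in the divided-power Garnir identity over $\Z$, and that the $T^\sigma$ stay regular row standard (the latter does follow: the Garnir entries of row~$k$ are all $\ge T(k+1,j)\ge k+1$ by regularity, so nothing below $k+1$ descends to row~$k{+}1$) --- are genuine obligations your sketch does not discharge.
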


%\bigskip
%
%\hrule
%
%\bigskip

\subsection{Comparison of flows}\label{Comparison of flows}
We are going to introduce a partial order on
$\F_i(a_1,{\ldots},a_q;b_1,{\ldots},b_q)$ and
%show how it is related
%to the order on tableaux introduced in Section~\ref{Comparison of tableaux}.
show its relation to the order of Section~\ref{Comparison of tableaux}.
We start with an obvious observation.

\begin{proposition}\label{proposition:lcwman2:1}
Let $\Gamma$ be a nonempty flow with sources $\mathfrak a_1,\ldots,\mathfrak a_{\mathfrak q}$ and
sinks $\mathfrak b_1,\ldots,\mathfrak b_{\mathfrak q}$. Take the edge $(\hat s,\hat t)$
of $\Gamma$
%with maximal end.
such that $\hat t>t$ for any other edge $(s,t)$ of $\Gamma$.
We have $\hat t=\mathfrak b_j$
for some $j=1,\ldots,\mathfrak q$. The graph $\hat\Gamma$ obtained
from $\Gamma$ by removing $(\hat s,\hat t)$ is again
a flow with
{
\leftmargini=22pt
\begin{enumerate}
\item\label{nosource} sources $\mathfrak a_1,{\ldots},\mathfrak a_{\mathfrak q}$ and
      sinks $\mathfrak b_1,{\ldots},\mathfrak b_{j-1},\hat s,\mathfrak b_{j+1},{\ldots},\mathfrak b_{\mathfrak q}$
      if $\hat s\notin\{\mathfrak a_1,{\ldots},\mathfrak a_{\mathfrak q}\}${\rm;}
\item\label{source} sources $\mathfrak a_1,\ldots,\mathfrak a_{k-1},\mathfrak a_{k+1},\ldots,\mathfrak a_{\mathfrak q}$ and
      sinks $\mathfrak b_1,\ldots,\mathfrak b_{j-1},\mathfrak b_{j+1},\ldots,\mathfrak b_{\mathfrak q}$ \linebreak if
      $\hat s=\mathfrak a_k$. %for some $k=1,\ldots,q$.
\end{enumerate}
}
\end{proposition}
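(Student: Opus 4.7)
The plan is straightforward bookkeeping: verify the four axioms of Definition~\ref{definition:lcwman:1} for $\hat\Gamma$, tracking how the removal of the single edge $(\hat s,\hat t)$ changes the local status of only $\hat s$ and $\hat t$, while every other vertex keeps the role it had in $\Gamma$. The vertex set together with the orientation/finiteness condition is trivially inherited from $\Gamma$, so the content reduces to checking the source/sink/transit-point classification at $\hat s$ and $\hat t$.

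First I would observe that $\hat t$ is necessarily a sink of $\Gamma$. By the defining property of $\hat t$ as the maximum of the second coordinates over all edges of $\Gamma$, no edge of $\Gamma$ begins at $\hat t$; but $(\hat s,\hat t)$ ends at $\hat t$, so $\hat t$ has an incoming edge and no outgoing edge. The axioms of a flow leave the sink role as the only possibility, hence $\hat t=\mathfrak b_j$ for some $j$.

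Next I would dispatch case~\ref{nosource}. Since $\hat s$ is not listed as a source of $\Gamma$ but does begin an edge, the only option left by Definition~\ref{definition:lcwman:1} is that $\hat s$ is a transit point of $\Gamma$: exactly one edge ends at $\hat s$ and exactly one (namely $(\hat s,\hat t)$) begins at $\hat s$. After removal, $\hat s$ retains its unique incoming edge but has no outgoing edge, so in $\hat\Gamma$ it becomes a sink. Meanwhile $\hat t=\mathfrak b_j$, being a sink of $\Gamma$, had only the one incoming edge $(\hat s,\hat t)$, which is now gone, so $\hat t$ is incident to no edge of $\hat\Gamma$ at all. Every other vertex is incident to exactly the same edges in $\hat\Gamma$ as in $\Gamma$ and therefore keeps its role, so the source/sink list of $\hat\Gamma$ is precisely the one claimed in~\ref{nosource}.

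Case~\ref{source} is even simpler: $\hat s=\mathfrak a_k$ is a source of $\Gamma$, so the unique edge of $\Gamma$ beginning at $\hat s$ must be $(\hat s,\hat t)$ itself; removing it leaves $\hat s$ incident to no edge (sources have no incoming edges), and $\hat t=\mathfrak b_j$ also becomes incident to no edge as above. All remaining sources, sinks, and transit points of $\Gamma$ retain their edges and hence their roles, giving exactly the description in~\ref{source}. I do not expect any real obstacle — as the authors remark, the statement is essentially an observation — the only point that demands attention is the dichotomy ``$\hat s$ is a transit point of $\Gamma$'' versus ``$\hat s$ is a source of $\Gamma$'', which is what separates the two cases and dictates whether $\hat s$ is resurrected as a sink or vanishes from the list of distinguished vertices altogether.
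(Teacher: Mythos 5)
Your proof is correct, and it takes the only natural approach — the paper itself offers no proof, calling the proposition an obvious observation, and your bookkeeping (classify $\hat t$ as a sink from maximality, then split on whether $\hat s$ is a source or a transit point, and note all other vertices keep their incident edges) is exactly the intended argument. One tiny remark: the uniqueness of the edge with maximal end, implicit in the proposition's phrase ``the edge,'' is itself a consequence of your first step that $\hat t$ must be a sink and hence has exactly one incoming edge.
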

Note that in this proposition the numbers $\mathfrak a_1, \ldots, \mathfrak a_{\mathfrak q}, \mathfrak b_1, \ldots, \mathfrak b_{\mathfrak q}$
do not have to satisfy~(\ref{equation:lcwman:2}) unlike $a_1, \ldots, a_q, b_1, \ldots, b_q$.

%We are going to introduce an order on
%$F_i(a_1,{\ldots},a_q;b_1,{\ldots},b_q)$ as follows.
Let $\Gamma$ and $\Gamma'$ be distinct flows of
$\F_i(a_1,{\ldots},a_q;b_1,{\ldots},b_q)$ and
\begin{equation}\label{equation:lcwman:2.75}
(s_1,t_1),\ldots,(s_r,t_r);\quad
(s'_1,t'_1),\ldots,(s'_{r'},t'_{r'})
\end{equation}
be the edges of $\Gamma$ and $\Gamma'$ respectively ordered so that
$t_1>\cdots>t_r$ and $t'_1>\cdots>t'_{r'}$.

Proposition~\ref{proposition:lcwman2:1} explains why
one sequence of~(\ref{equation:lcwman:2.75}) can not be a proper beginning of
the other. Indeed, assume that $r<r'$ and $(s_k,t_k)=(s'_k,t'_k)$ for all
$k=1,\ldots,r$.
Let $\tilde\Gamma$ and $\tilde\Gamma'$ be the graphs obtained from
$\Gamma$ and $\Gamma'$ respectively by removing the edges
$(s_1,t_1),\ldots,(s_r,t_r)$. Then by Proposition~\ref{proposition:lcwman2:1} applied $r$ times,
$\tilde\Gamma$ and $\tilde\Gamma'$ are flows with the same sources and sinks.
To understand this, note that in~\ref{nosource} and~\ref{source} of
Proposition~\ref{proposition:lcwman2:1}, new sources and sinks depend
only on $a_1,\ldots,a_q,b_1,\ldots,b_q$ and the edge $(\hat s,\hat t)$
being removed. However $\tilde\Gamma$ is the empty flow, while
$\tilde\Gamma'$ is not, since $r'>r$. This is a contradiction.

By what we have just proved, there exists an index $l=1,\ldots,\min\{r,r'\}$
such that $(s_k,t_k)=(s'_k,t'_k)$ for $k<l$ and $(s_l,t_l)\ne(s'_l,t'_l)$.
Let $\Gamma_0$ and $\Gamma'_0$ be the graphs obtained form $\Gamma$ and $\Gamma'$
respectively by removing the edges $(s_1,t_1),\ldots,(s_{l-1},t_{l-1})$.
By Proposition~\ref{proposition:lcwman2:1} applied $l-1$ times,
$\Gamma_0$ and $\Gamma'_0$ are flows with the same sources and sinks.
Hence $t_l=t'_l$, since $t_l$ is the maximal sink of $\Gamma_0$ and
$t'_l$ is the maximal sink of $\Gamma'_0$. We write $\Gamma<\Gamma'$
if $s_l>s'_l$.

We still need to prove that the relation on
$\F_i(a_1,{\ldots},a_q;b_1,{\ldots},b_q)$ we have defined
is a linear order. Clearly, there exists a linear order $<$ on $\Z^2$
such that
$$%\begin{equation}\label{equation:lcwman:3}
(s,t)<(s',t)\mbox{ if and only if }s>s'.
$$%%\end{equation}
Then it is obvious that $\Gamma<\Gamma'$ if and only if
$(s_1,t_1),\ldots,(s_r,t_r)$ is lexicographically smaller than
$(s'_1,t'_1),\ldots,(s'_{r'},t'_{r'})$ with respect to
the chosen order on $\Z^2$.

{\it Example.} Consider the flows $\Gamma$ and $\Gamma'$ of
%$\F_4(1,3;5,6)$
$\F_4(1,3;6,5)$
with edges
$$
(3,6),(4,5),(2,4),(1,2);\quad (3,6),(4,5),(1,4)
$$
respectively. The first two edges are the same in both graphs.
Therefore looking at the third edges $(2,4)$ and $(1,4)$,
we conclude $\Gamma<\Gamma'$.

\begin{lemma}\label{lemma:lcwman:6}
Let $\Gamma$ and $\Gamma'$ be flows of
$\F_i(a_1,{\ldots},a_q;b_1,{\ldots},b_q)$ such that
$\Gamma<\Gamma'$. Then
$(\nu_1(\Gamma),\ldots,\nu_n(\Gamma))$
is smaller than $(\nu_1(\Gamma'),\ldots,\nu_n(\Gamma'))$
in the antilexicographic order with the dominance order on components.
\end{lemma}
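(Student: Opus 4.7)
The plan is to pinpoint a single index $\hat m$ witnessing the antilexicographic inequality. Let $l$ be the least index at which the ordered edge lists of $\Gamma$ and $\Gamma'$ differ, so that $(s_k,t_k)=(s'_k,t'_k)$ for $k<l$ while $t_l=t'_l$ and $s_l>s'_l$. The equality $t_l=t'_l$ together with the existence of such an $l$ was already established before the statement by iterating Proposition~\ref{proposition:lcwman2:1} on the initial segments of the two edge lists. I will set $\hat m:=t_l-1$, which is $\ge 1$ since $t_l>s_l\ge 1$, and claim both that $\nu_m(\Gamma)=\nu_m(\Gamma')$ for every $m>\hat m$ and that $\nu_{\hat m}(\Gamma)<\nu_{\hat m}(\Gamma')$ in dominance order.

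For the equalities at $m\ge t_l$, recall that an edge $(s,t)$ contributes $\epsilon_s$ to $\nu_m$ exactly when $s\le m<t$. Since $t_1>\cdots>t_r$ strictly, the only edges of $\Gamma$ satisfying $t>m\ge t_l$ are those indexed by $k<l$, and similarly for $\Gamma'$. These initial edges coincide in the two flows, so the contributions agree position by position, giving $\nu_m(\Gamma)=\nu_m(\Gamma')$.

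At $m=\hat m=t_l-1$ the same case analysis shows that edges with $k<l$ again contribute a common $\epsilon_{s_k}$ to both sides; the $l$-th edges $(s_l,t_l)$ and $(s'_l,t_l)$ satisfy $t_l>\hat m$ and $s_l,s'_l<t_l$, hence contribute $\epsilon_{s_l}$ to $\nu_{\hat m}(\Gamma)$ and $\epsilon_{s'_l}$ to $\nu_{\hat m}(\Gamma')$; and edges with $k>l$ have $t_k<t_l$, i.e.\ $t_k\le\hat m$, so they drop out on both sides. Subtracting, $\nu_{\hat m}(\Gamma')-\nu_{\hat m}(\Gamma)=\epsilon_{s'_l}-\epsilon_{s_l}$. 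Since $s'_l<s_l$, the $j$-th partial sum of this difference is $\delta_{s'_l\le j}-\delta_{s_l\le j}$, which equals $0$ for $j<s'_l$ or $j\ge s_l$ and $1$ for $s'_l\le j<s_l$. All partial sums are nonnegative and the vector is nonzero, so $\nu_{\hat m}(\Gamma)<\nu_{\hat m}(\Gamma')$ in dominance order, completing the antilexicographic comparison.

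The argument is essentially bookkeeping, and I do not expect a real obstacle: one only needs to be careful about which edges survive the constraint $s\le m<t$ for $m\ge t_l$ versus $m=t_l-1$. The one fact that must be imported, namely $t_l=t'_l$ (equivalently, that the maximal sinks of the truncated flows $\Gamma_0,\Gamma'_0$ agree), is guaranteed by Proposition~\ref{proposition:lcwman2:1} exactly as in the definition of the order on $\F_i(a_1,\ldots,a_q;b_1,\ldots,b_q)$.
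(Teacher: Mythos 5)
Your proof is correct and follows the same route as the paper: both arguments choose $\hat m = t_l-1$ as the witnessing index, check that $\nu_m(\Gamma)=\nu_m(\Gamma')$ for all $m\ge t_l$ because only the common initial edges $(s_k,t_k)=(s'_k,t'_k)$, $k<l$, can contribute there, and compute $\nu_{\hat m}(\Gamma')-\nu_{\hat m}(\Gamma)=\epsilon_{s'_l}-\epsilon_{s_l}$, which is $>0$ in dominance order since $s'_l<s_l$. The paper phrases the ``only the first $l$ edges can contribute'' step via a counting function $N_m$, but the content of the argument is identical.
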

\begin{proof} We use the notation introduced
%just
before this lemma. Recall that we have
$(s_k,t_k)=(s'_k,t'_k)$ for $k<l$, $t_l=t'_l$ and $s_l>s'_l$.
Take any integer $m$ such that $t_l-1\le m\le n$.
We denote by $N_m$ and $N'_m$ the numbers of elements $>m$ in the sequences
$t_1,\ldots, t_r$ and $t'_1,\ldots, t'_{r'}$ respectively.
Since $t_{l+1}\le t_l-1\le m$ if $l<r$ and $t'_{l+1}\le t'_l-1\le m$ if $l<r'$, we obtain that
$N_m$ and $N'_m$ are the numbers of elements $>m$ in the sequences
$t_1,\ldots, t_l$ and $t'_1,\ldots, t'_l$ respectively. However these sequences are equal. Thus $N_m=N'_m$.

If $m\ge t_l$ then $N_m<l$ and
$\nu_m(\Gamma)=\nu_m(\Gamma')=\sum_{k=1}^{N_m}\delta_{s_k\le m}\;\epsilon_{s_k}$.
On the other hand, $N_{t_l-1}=l$ and
$$
\begin{array}{l}
\displaystyle
\nu_{t_l-1}(\Gamma')-\nu_{t_l-1}(\Gamma)=
\sum\nolimits_{k=1}^l\delta_{s'_k\le t_l-1}\;\epsilon_{s'_k}-\sum\nolimits_{k=1}^l\delta_{s_k\le t_l-1}\;\epsilon_{s_k}=\\[6pt]
\displaystyle
\epsilon_{s'_l}-\epsilon_{s_l}=\alpha(s'_l,s_l)>0.
\end{array}
$$
%Note that $(s_l,t_l)<(s'_l,t_l)$, whence $s_l>s'_l$
%by~(\ref{equation:lcwman:3}).
To check the last inequality, see the definition of the dominance order in
Section~\ref{Comparison of tableaux}.
\end{proof}

For
%$\Gamma,\Gamma'\in\F_4(1,3;5,6)$
$\Gamma,\Gamma'\in\F_4(1,3;6,5)$
as in the above example, we have
$$
{
\arraycolsep=2pt
\begin{array}{rclrclrclrcl}
\nu_6(\Gamma) &=&0, &\quad\nu_5(\Gamma) &=&\epsilon_3, &\quad\nu_4(\Gamma) &=&\epsilon_3+\epsilon_4, &\quad\nu_3(\Gamma) &=&\epsilon_2+\epsilon_3,\\[3pt]
\nu_6(\Gamma')&=&0, &\quad\nu_5(\Gamma')&=&\epsilon_3, &\quad\nu_4(\Gamma')&=&\epsilon_3+\epsilon_4, &\quad\nu_3(\Gamma')&=&\epsilon_1+\epsilon_3,
\end{array}}
$$
and $\nu_3(\Gamma)<\nu_3(\Gamma')$ as predicted by Lemma~\ref{lemma:lcwman:6}.

%Given a strict partial order $<$, we understand an expression $x\le y$ as
%$x<y$ or $x=y$. We shall use this stipulation in the rest of the paper.

\begin{lemma}\label{lemma:lcwman:7}
Let $T$ and $S$ be regular row standard tableaux, $\Gamma$ and $\Gamma'$
be flows of $\F_i(a_1,\ldots,a_q;b_1,\ldots,b_q)$ such that
$\Gamma\le\Gamma'$ and $\sigma_{\Gamma',i}(S)\le\sigma_{\Gamma,i}(T)$.
Then we have $S\le T$ and, moreover, $S<T$ if $\Gamma<\Gamma'$.
\end{lemma}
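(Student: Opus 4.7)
The plan is to transfer the inequality on the $\sigma$-images to one on $S$ and $T$ themselves through the identity provided by Lemma~\ref{lemma:lcwman:5}. Indeed, for every $m\in\{1,\ldots,n\}$ that lemma gives
$$
\shape(\sigma_{\Gamma,i}(T)[m])-\shape(\sigma_{\Gamma',i}(S)[m])=(\shape(T[m])-\shape(S[m]))+(\nu_m(\Gamma)-\nu_m(\Gamma')),
$$
since the shift $\sum_{k=i+1}^m\epsilon_k$ appears on each side and cancels. I will call this identity $(\ast)_m$ and use throughout that the dominance order is additive and that the sum of a nonnegative and a strictly positive element in this order is strictly positive.

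If $\Gamma=\Gamma'$, the $\nu$-terms in $(\ast)_m$ cancel for every $m$, so the antilexicographic comparison of $\sigma_{\Gamma',i}(S)$ and $\sigma_{\Gamma,i}(T)$ is literally the same as that of $S$ and $T$, giving $S\le T$ at once. Assume now $\Gamma<\Gamma'$. By Lemma~\ref{lemma:lcwman:6} there is an index $m_0$ with $\nu_{m_0}(\Gamma)<\nu_{m_0}(\Gamma')$ strictly in dominance and $\nu_m(\Gamma)=\nu_m(\Gamma')$ for $m>m_0$; the goal is now to strengthen the conclusion to $S<T$.

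The hypothesis $\sigma_{\Gamma',i}(S)\le\sigma_{\Gamma,i}(T)$ provides either equality of chains or a largest index $m^*$ at which the left side of $(\ast)_{m^*}$ is strictly positive and for which the left side of $(\ast)_m$ vanishes for all $m>m^*$. If $m^*>m_0$, then for $m>m^*$ both terms on the right of $(\ast)_m$ vanish, so $\shape(T[m])=\shape(S[m])$; and at $m=m^*$ the $\nu$-contribution vanishes and the strict positivity transfers directly to $\shape(T[m^*])-\shape(S[m^*])$. In the remaining subcases (chain equality or $m^*\le m_0$), rearrange $(\ast)_{m_0}$ to express $\shape(T[m_0])-\shape(S[m_0])$ as the sum of the nonnegative left side of $(\ast)_{m_0}$ and the strictly positive $\nu_{m_0}(\Gamma')-\nu_{m_0}(\Gamma)$, which is therefore strictly positive; for $m>m_0$ both the left side of $(\ast)_m$ and the $\nu$-difference vanish, forcing $\shape(T[m])=\shape(S[m])$. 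In each subcase $S<T$.

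The main obstacle is the bookkeeping of the position of $m^*$ relative to $m_0$ combined with the consistent use of additivity of the dominance order in the middle subcase $m^*=m_0$, where the strict inequality comes from adding two contributions, one of which is only guaranteed to be nonnegative; no input beyond Lemmas~\ref{lemma:lcwman:5} and~\ref{lemma:lcwman:6} is needed.
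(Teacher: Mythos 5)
Your proof is correct and follows essentially the same route as the paper's: you derive the same identity from Lemma~\ref{lemma:lcwman:5}, handle $\Gamma=\Gamma'$ by direct cancellation, and for $\Gamma<\Gamma'$ split on whether the critical index of the $\sigma$-chain comparison exceeds the critical index from Lemma~\ref{lemma:lcwman:6} (your $m^*$ and $m_0$ are the paper's $y$ and $x$, with chain equality corresponding to $y=-\infty$). The case analysis, the use of additivity of the dominance order, and the point where a strictly positive $\nu$-difference is added to a nonnegative $\sigma$-difference all match the paper's argument.
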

\begin{proof} By Lemma~\ref{lemma:lcwman:5}, for any $m=1,\ldots,n$, we have
$$
\shape(\sigma_{\Gamma,i}(T)[m])-\shape(\sigma_{\Gamma',i}(S)[m])=
\shape(T[m])-\shape(S[m])
$$
if $\Gamma=\Gamma'$. Therefore, $\sigma_{\Gamma',i}(S)\le\sigma_{\Gamma,i}(T)$
immediately implies $S\le T$.

Therefore, we consider only the case $\Gamma<\Gamma'$.
By Lemma~\ref{lemma:lcwman:6}, the sequence
$(\nu_1(\Gamma),\ldots,\nu_n(\Gamma))$ is smaller
than the sequence $(\nu_1(\Gamma'),\ldots,\nu_n(\Gamma'))$
in the antilexicographic order. This means that there
is some $x=1,\ldots,n$ such that $\nu_x(\Gamma)<\nu_x(\Gamma')$ and
$\nu_m(\Gamma)=\nu_m(\Gamma')$ for $x<m\le n$.

Let $y=1,\, \ldots,\, n$ be the number such that
$\shape(\sigma_{\Gamma',i}(S)[y])<$\linebreak $\shape(\sigma_{\Gamma,i}(T)[y])$ and
$\shape(\sigma_{\Gamma',i}(S)[m])=\shape(\sigma_{\Gamma,i}(T)[m])$ for $y<m\le n$
if $\sigma_{\Gamma',i}(S)<\sigma_{\Gamma,i}(T)$ and let $y:=-\infty$
if $\sigma_{\Gamma',i}(S)=\sigma_{\Gamma,i}(T)$.

{\it Case $y>x$.} For $m>y$, we have $m>x$ and thus
$\nu_m(\Gamma)=\nu_m(\Gamma')$. Therefore, by Lemma~\ref{lemma:lcwman:5},
we get
$$
0=\shape(\sigma_{\Gamma,i}(T)[m])-\shape(\sigma_{\Gamma',i}(S)[m])=
  \shape(T[m])-\shape(S[m]).
$$
On the other hand, $\nu_y(\Gamma)=\nu_y(\Gamma')$ and
Lemma~\ref{lemma:lcwman:5} yields
$$
0<\shape(\sigma_{\Gamma,i}(T)[y])-\shape(\sigma_{\Gamma',i}(S)[y])=
  \shape(T[y])-\shape(S[y]).
$$

{\it Case $y\le x$.} For $m>x$, we have $\nu_m(\Gamma)=\nu_m(\Gamma')$
and $m>y$. Therefore, by Lemma~\ref{lemma:lcwman:5},
we get
$$
0=\shape(\sigma_{\Gamma,i}(T)[m])-\shape(\sigma_{\Gamma',i}(S)[m])=
  \shape(T[m])-\shape(S[m]).
$$
On the other hand, Lemma~\ref{lemma:lcwman:5} yields
$$
\begin{array}{l}
0\le\shape(\sigma_{\Gamma,i}(T)[x])-\shape(\sigma_{\Gamma',i}(S)[x])=\\[6pt]
\shape(T[x])+\nu_x(\Gamma)-\shape(S[x])-\nu_x(\Gamma'),
\end{array}
$$
whence $\shape(T[x])-\shape(S[x])\ge\nu_x(\Gamma')-\nu_x(\Gamma)>0$.
\end{proof}

% Monomials $F^{(N)}$ can be compared directly without using tableaux.
% Suppose that $F_T=F_{T'}$ for a regular row standard $\lm$-tableau $T$
% and for a regular row standard $\lm'$-tableau $T'$.
% Clearly, $T'=T+Q$, where $Q$ is the $\lm'-\lm$-tableau having
% in each row $k$ only entries $k$. Then we have
% $$
% \shape(T'[m])-\shape(T[m]){=}\shape(Q[m]){=}
% (\lm'_1-\lm_1,\ldots,\lm'_m-\lm_m,0,\ldots,0).
% $$
% Therefore, if $S$ is a regular row standard $\lm$-tableau
% and $S'$ is a regular row standard $\lm'$-tableau
% such that $F_S=F_{S'}$, then $T<S$ if and only if $T'<S'$.
%
% Thus we can define $F_T<F_S$ as $T<S$ for any
% regular row standard $\lm$-tableaux $T$ and $S$.
%
% \begin{definition}\label{definition:lcwman2:3}
% Let $f$ be a nonzero monomial of $U^-$
% written as $f=$\linebreak $\sum_{M\in X}\alpha_MF^{(M)}$,
% where $X$ is a nonempty finite subset of $UT(n)$ and
% $\alpha_M\in\K\setminus\{0\}$.
% A monomial $F^{(N)}$, where $N\in X$, is called
% a minimal monomial of $f$ if $F^{(M)}\not<F^{(N)}$ for any $M\in X$.
% \end{definition}

\begin{corollary}\label{corollary:lcwman:5}
Suppose that %~(\ref{equation:lcwman:2}) holds and
$F=\sum_{S\in X}\alpha_SF_S$, where $X$ is a nonempty
%finite
set of regular row standard $\lm$-tableaux and
$\alpha_S\in\K^*$. Let $T$ be a minimal element of $X$
and $\Gamma$ be the smallest element of
$\F_i(a_1,\ldots,a_q;b_1,\ldots,b_q)$.
If $\sigma_{\Gamma,i}(T)$ is well-defined, then
$$
\xi_i(a_1,\ldots,a_q;b_1,\ldots,b_q)(F)=
\sum\nolimits_{P\in Y}\beta_PF_P,
$$
where $Y$ is a
%finite
set of regular row standard
$(\lm_1,{\ldots},\lm_i,\lm_{i+1}{+}1,{\ldots},\lm_n{+}1)$-tab\-leaux,
$\beta_P\in\K^*$ and $\sigma_{\Gamma,i}(T)$ is a minimal element of $Y$.
\end{corollary}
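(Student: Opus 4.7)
The plan is to apply Corollary~\ref{corollary:lcwman:4} termwise to $F=\sum_{S\in X}\alpha_S F_S$, yielding
$$
\xi_i(a_1,\ldots,a_q;b_1,\ldots,b_q)(F)=\sum_{S\in X}\sum_{\Gamma'}\alpha_S\sgn_i(\Gamma')F_{\sigma_{\Gamma',i}(S)},
$$
where the inner sum ranges over $\Gamma'\in\F_i(a_1,\ldots,a_q;b_1,\ldots,b_q)$ for which $\sigma_{\Gamma',i}(S)$ is well-defined. Collect equal terms and let $Y$ be the set of regular row standard tableaux $P$ appearing with nonzero aggregated coefficient $\beta_P$. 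By property~\ref{property:sigma-2} each such $P$ has the required shape, and it is automatic that $\beta_P\in\K^*$.

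The core step is to prove the following uniqueness assertion: the only pair $(S,\Gamma')\in X\times\F_i(a_1,\ldots,a_q;b_1,\ldots,b_q)$ with $\sigma_{\Gamma',i}(S)$ well-defined and $\sigma_{\Gamma',i}(S)\le\sigma_{\Gamma,i}(T)$ is $(T,\Gamma)$. This is a direct application of Lemma~\ref{lemma:lcwman:7}. Indeed, since $\Gamma$ is the smallest element of the flow set, we automatically have $\Gamma\le\Gamma'$ for every admissible $\Gamma'$. Hence the hypothesis of Lemma~\ref{lemma:lcwman:7} is met and we conclude $S\le T$; the minimality of $T$ in $X$ then forces $S=T$. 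If additionally $\Gamma<\Gamma'$, the same lemma yields the strict inequality $S<T$, contradicting $S=T$. Therefore $\Gamma=\Gamma'$ as well.

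Applied to the equality case $\sigma_{\Gamma',i}(S)=\sigma_{\Gamma,i}(T)$, this uniqueness shows that the only contribution to the coefficient of $F_{\sigma_{\Gamma,i}(T)}$ in the double sum comes from the pair $(T,\Gamma)$, so $\beta_{\sigma_{\Gamma,i}(T)}=\alpha_T\sgn_i(\Gamma)\in\K^*$ and in particular $\sigma_{\Gamma,i}(T)\in Y$. Here one uses that distinct regular row standard tableaux of the same shape yield distinct elements $F_P=F^{(\N_P)}$, since the matrix $\N_P$ together with the shape determines $P$ and the $F^{(N)}$ are part of the PBW basis of Proposition~\ref{proposition:lcwman2:0}. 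Applied to the weak inequality $\sigma_{\Gamma',i}(S)\le\sigma_{\Gamma,i}(T)$, the same uniqueness shows that any $P\in Y$ with $P\le\sigma_{\Gamma,i}(T)$ must equal $\sigma_{\Gamma,i}(T)$, establishing minimality in $Y$.

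The only point requiring genuine care is the reduction of both the non-vanishing claim and the minimality claim to the single uniqueness statement above; once this packaging is made explicit, the argument is essentially a repeated invocation of Lemma~\ref{lemma:lcwman:7} combined with the linear independence of the $F^{(N)}$. The hypothesis that $\sigma_{\Gamma,i}(T)$ be well-defined is used exactly to guarantee that the candidate minimal element is actually present in $Y$.
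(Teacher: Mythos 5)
Your proof is correct and takes essentially the same route as the paper: both expand $\xi_i(\ldots)(F)$ termwise via Corollary~\ref{corollary:lcwman:4} and then reduce the nonvanishing and minimality claims to a single statement—that the pair $(T,\Gamma)$ is the only one with $\sigma_{\Gamma',i}(S)\le\sigma_{\Gamma,i}(T)$—proved by invoking Lemma~\ref{lemma:lcwman:7} together with the minimality of $T$ and the smallest\-ness of $\Gamma$. The paper phrases this key step as a contrapositive (``if $(\Gamma',S)\ne(\Gamma,T)$ then $\sigma_{\Gamma',i}(S)\not\le\sigma_{\Gamma,i}(T)$''), but the logical content matches yours.
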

\begin{proof} By Corollary~\ref{corollary:lcwman:4}, we obtain
$$
\begin{array}{l}
\displaystyle
\xi_i(a_1,\ldots,a_q;b_1,\ldots,b_q)(F)=
\sum\Bigl\{
\alpha_S
\sgn_i(\Gamma')F_{\sigma_{\Gamma',i}(S)}\|\\[6pt]
\displaystyle
\Gamma'\in\F_i(a_1,\ldots,a_q;b_1,\ldots,b_q),\;
S\in X\mbox{ and }\sigma_{\Gamma',i}(S)\mbox{ is well-defined}
\Bigr\}.
\end{array}
$$
Therefore, it suffices to show the following fact:
if $\Gamma'\in\F_i(a_1,\ldots,a_q;$\linebreak $b_1,\ldots,b_q)$,
$S\in X$, $\sigma_{\Gamma',i}(S)$ is well-defined and
$(\Gamma',S)\ne(\Gamma,T)$, then \linebreak
$\sigma_{\Gamma',i}(S)\not\le\sigma_{\Gamma,i}(T)$.

Indeed suppose that $\sigma_{\Gamma',i}(S)\le\sigma_{\Gamma,i}(T)$
for some $S$ and $\Gamma'$ satisfying the above hypothesis.
Recall that $\Gamma\le\Gamma'$, since $\Gamma$ is the smallest.
By Lemma~\ref{lemma:lcwman:7}, $S<T$
%and thus $F_S<F_T$
if $\Gamma<\Gamma'$, which contradicts the minimality of $T$.
If $\Gamma=\Gamma'$ then Lemma~\ref{lemma:lcwman:7} implies $S\le T$.
However, the case $S=T$ is impossible since $(\Gamma',S)\ne(\Gamma,T)$.
Hence, we again obtain the contradiction $S<T$.
%
% Предположим проитвное. Тогда если $A\ne(a..i]$,
% то по лемме~\ref{lemma:lcwman:3} выполнено $s<t$,
% что противоречит минимальности $t$. В случае $A=(a..i]$ из
% леммы~\ref{lemma:lcwman:3} следует $s\le t$. Однако условие
% $(s,A)\ne(t,(a..i])$ опять влечет противоречие $s<t$.
\end{proof}

%\bigskip
%
%\hrule
%
%\bigskip

\subsection{Proof of Conjecture B}
Let $\omega=c_1\omega_1+\cdots+c_{n-1}\omega_{n-1}$ be a nonzero
dominant weight. Choose any $i=1,\ldots,n-1$ such that $c_i>0$.
Consider sequences
%$i^{(1)}_1,\ldots,i^{(1)}_{k_1}$;
%$i^{(2)}_1,\ldots,i^{(2)}_{k_2}$;\ldots;$i^{(q)}_1,\ldots,i^{(q)}_{k_q}$
$i^{(q)}_1,{\ldots},i^{(q)}_{k_q}$; $i^{(q-1)}_1,\ldots,i^{(q-1)}_{k_{q-1}}$;
\ldots; $i^{(1)}_1,\ldots,i^{(1)}_{k_1}$ and integers
$a_1,\ldots,a_q,b_1,\ldots,b_q$
satisfying the hypothesis of Lemma~\ref{lemma:lcwman:4}.
For brevity, we shall use the abbreviation $X^V_\alpha:=X^V_{\alpha,1}$.
By Lemma~\ref{lemma:lcwman:0} and
%Remark~\ref{remark:lcwman2:2},
 Definition~\ref{definition:xi},
for any $F\in\U^-$, we have
\begin{equation}\label{equation:lcwman:4}
{
\arraycolsep=1pt
\begin{array}{l}
\biggl[X^V_{\alpha_{i^{(q)}_1}},\ldots,X^V_{\alpha_{i^{(q)}_{k_q}}},X^V_{\alpha_i},
    X^V_{\alpha_{i^{(q-1)}_1}},\ldots,X^V_{\alpha_{i^{(q-1)}_{k_{q-1}}}},X^V_{\alpha_i},\ldots,\\[12pt]
\;\;X^V_{\alpha_{i^{(1)}_1}},\ldots,X^V_{\alpha_{i^{(1)}_{k_1}}},X^V_{\alpha_i},D^\omega_{\omega_i}\biggr](Fe^+_\omega)=\\[12pt]
\Bigl[\eta_{\alpha_{i^{(q)}_1}},\ldots,\eta_{\alpha_{i^{(q)}_{k_q}}},\eta_{\alpha_i},
      \eta_{\alpha_{i^{(q-1)}_1}},\ldots,\eta_{\alpha_{i^{(q-1)}_{k_{q-1}}}},\eta_{\alpha_i},\ldots,\\[12pt]
\;\;  \eta_{\alpha_{i^{(1)}_1}},\ldots,\eta_{\alpha_{i^{(1)}_{k_1}}},\eta_{\alpha_i},\theta_{\omega_i}
\Bigr](F)e^+_{\omega-\omega_i}=\\[12pt]
 \left\llbracket i^{(q)}_1,\ldots,i^{(q)}_{k_q},i,
 i^{(q-1)}_1,{\ldots},i^{(q-1)}_{k_{q-1}},i,\ldots,
 i^{(1)}_1,{\ldots},i^{(1)}_{k_1},i\right\rrbracket_i(F)\,e^+_{\omega-\omega_i}=\\[12pt]
\xi_i(a_1,\ldots,a_q;b_1,\ldots,b_q)(F)e^+_{\omega-\omega_i},
\end{array}}
\end{equation}
where $V=\Delta(\omega)\oplus\Delta(\omega-\omega_i)$ and
$D^{\,\omega}_{\omega_i}$ is any operator on $V$ whose restriction to
$\Delta(\omega)$ coincides with $d^{\,\omega}_{\omega_i}$.
Hence the restriction

\begin{equation}\label{equation:lcwman:4.5}
\begin{array}{l}
\biggl[
X^V_{\alpha_{i^{(q)}_1}},\ldots,X^V_{\alpha_{i^{(q)}_{k_q}}},X^V_{\alpha_i},
    X^V_{\alpha_{i^{(q-1)}_1}},\ldots,X^V_{\alpha_{i^{(q-1)}_{k_{q-1}}}},X^V_{\alpha_i},\ldots,\\[12pt]
\;\;X^V_{\alpha_{i^{(1)}_1}},\ldots,X^V_{\alpha_{i^{(1)}_{k_1}}},X^V_{\alpha_i},D^\omega_{\omega_i}
\biggr]\biggl|_{\Delta(\omega)}
\end{array}
\end{equation}
is a map from $\Delta(\omega)$ to $\Delta(\omega-\omega_i)$
depending only on $a_1,\ldots,a_q,b_1,\ldots,b_q,i$.
We denote it by $z^\omega_i(a_1,\ldots,a_q;b_1,\ldots,b_q)$.
Now we can rewrite~(\ref{equation:lcwman:4}) as
\begin{equation}\label{equation:lcwman:5}
z^\omega_i(a_1,\ldots,a_q;b_1,\ldots,b_q)(Fe^+_\omega)=
\xi_i(a_1,\ldots,a_q;b_1,\ldots,b_q)(F)e^+_{\omega-\omega_i}.
\end{equation}

\begin{lemma}\label{lemma:lcwman2:8}
Let $\omega=c_1\omega_1+\cdots+c_{n-1}\omega_{n-1}$ be a nonzero
dominant weight and $v$ be a nonzero vector of $\Delta(\omega)$.
Then there exist integers $a_1,\ldots,a_q$, \linebreak
$b_1,\ldots,b_q,i$ such that
$$
1\le a_1<a_2<\cdots<a_q\le i<b_q<\cdots<b_2<b_1\le n,\quad c_i>0
$$
and $z^\omega_i(a_1,\ldots,a_q;b_1,\ldots,b_q)(v)\ne0$.
\end{lemma}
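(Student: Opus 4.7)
The plan is to reduce $v$ to the Carter--Lusztig standard basis of $\Delta(\omega)$, pick a minimal tableau in its expansion, and then engineer $i$ together with source/sink data so that Corollary~\ref{corollary:lcwman:5} combined with the straightening rule produces the desired nonzero image under $z^\omega_i$. First I would fix a partition $\lambda$ coherent with $\omega$ and, using Proposition~\ref{basis of Weyl module}, write $v=Fe^+_\omega$ with $F=\sum_{T\in X}\alpha_T F_T$, where $X$ is a nonempty finite collection of standard $\lambda$-tableaux and $\alpha_T\in\K^*$; then choose $T_0\in X$ minimal in the partial order of Section~\ref{Comparison of tableaux}.

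The combinatorial heart of the argument is to select $i\in\{1,\ldots,n-1\}$ with $c_i>0$ and integers $1\le a_1<\cdots<a_q\le i<b_q<\cdots<b_1\le n$ such that the smallest flow $\Gamma$ of $\F_i(a_1,\ldots,a_q;b_1,\ldots,b_q)$, in the order of Section~\ref{Comparison of flows}, satisfies both: (a) $\sigma_{\Gamma,i}(T_0)$ is well-defined (equivalently, by~\ref{property:sigma-3}, every entry of $\N_{T_0}-\Gamma$ is nonnegative); and (b) $\sigma_{\Gamma,i}(T_0)$ is a \emph{standard} tableau of shape $\mu:=(\lambda_1,\ldots,\lambda_i,\lambda_{i+1}+1,\ldots,\lambda_n+1)$, a partition coherent with $\omega-\omega_i$ precisely because $c_i>0$. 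The underlying idea is that step~(ii) of $\sigma_{\Gamma,i}$ inserts a fresh $k$ into every row $k>i$ of $T_0$, which would in general destroy column-strictness wherever the initial entries of successive rows of $T_0$ exceed the staircase $1,2,\ldots,n$. Each edge $(s,t)$ of $\Gamma$ then repairs such a defect by replacing an occurrence of $t$ in row $s$ by $s$. One should read off the corrective flow from the explicit defects of $T_0$ and then choose $a_j,b_j$ precisely so that this flow is forced to be the minimum of $\F_i(a_1,\ldots,a_q;b_1,\ldots,b_q)$.

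With this data in hand, Corollary~\ref{corollary:lcwman:5} yields $\xi_i(a_1,\ldots,a_q;b_1,\ldots,b_q)(F)=\sum_{P\in Y}\beta_P F_P$, where $Y$ is a finite set of regular row standard $\mu$-tableaux, $\beta_P\in\K^*$, and $\sigma_{\Gamma,i}(T_0)$ is a minimal element of $Y$. Applying this element of $\U^-$ to $e^+_{\omega-\omega_i}$ and expanding every $F_Pe^+_{\omega-\omega_i}$ in the standard basis of $\Delta(\omega-\omega_i)$ via the straightening rule (Proposition~\ref{straightening rule}), the coefficient of $F_{\sigma_{\Gamma,i}(T_0)}e^+_{\omega-\omega_i}$ equals exactly $\beta_{\sigma_{\Gamma,i}(T_0)}\ne 0$: any non-standard $F_P$ straightens into a sum of $F_S$ with $S>P$, and $S=\sigma_{\Gamma,i}(T_0)$ would force $P<\sigma_{\Gamma,i}(T_0)$, contradicting the minimality of $\sigma_{\Gamma,i}(T_0)$ in $Y$. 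By Proposition~\ref{basis of Weyl module} the standard basis of $\Delta(\omega-\omega_i)$ is linearly independent, so $\xi_i(\ldots)(F)\,e^+_{\omega-\omega_i}\ne 0$, and identity~(\ref{equation:lcwman:5}) gives $z^\omega_i(a_1,\ldots,a_q;b_1,\ldots,b_q)(v)\ne 0$.

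The hard part is the middle paragraph: one must describe $i$ and the source/sink data as explicit functions of $T_0$ and $\lambda$ so that the \emph{minimum} element of $\F_i(a_1,\ldots,a_q;b_1,\ldots,b_q)$ (and not some larger flow) is the one that repairs $T_0$'s column defects, while simultaneously satisfying $\N_{T_0}-\Gamma\ge 0$. Everything downstream is then a formal consequence of Corollary~\ref{corollary:lcwman:5}, Proposition~\ref{straightening rule}, and the linear independence of the Carter--Lusztig basis.
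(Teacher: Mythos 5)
Your plan and the paper's proof are the same in outline: expand $v$ in the Carter--Lusztig standard basis, take a minimal tableau $T$ in that expansion, find $i$ and source/sink data so that the smallest flow $\Gamma$ sends $T$ to a well-defined \emph{standard} tableau, then combine Corollary~\ref{corollary:lcwman:5}, the straightening rule, and linear independence of the standard basis of $\Delta(\omega-\omega_i)$. The downstream argument you sketch is correct and matches the paper.

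However, you explicitly stop short of the load-bearing step --- the construction of $i$, $a_1,\ldots,a_q$, $b_1,\ldots,b_q$ and the verification that the required flow really is the minimum of $\F_i(a_1,\ldots,a_q;b_1,\ldots,b_q)$ --- calling it ``the hard part'' and deferring it. That is precisely the content of the lemma, so the proposal as written has a genuine gap. The paper fills it as follows. Take $i$ to be the \emph{maximal} index with $c_i>0$; this makes $i$ the height of the first column of $[\lambda]$ and guarantees rows $i+1,\ldots,n$ of $T$ are empty, so step~(ii) of $\sigma_{\Gamma,i}$ only creates the fresh entries $i+1,\ldots,n$. Then let $i_0$ be the least $s\le i$ with $T(s,1)>s$ (or $+\infty$), and take $\Gamma$ to have exactly the edges $(s,T(s,1))$ for $s=i_0,\ldots,i$. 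Standardness of $T$ forces these to chain: whenever $T(s,1)\le i$ the point $T(s,1)$ is itself a source of another edge, so it is a transit point, and hence every sink lies above $i$. Setting $a_1,\ldots,a_q$ and $b_1,\ldots,b_q$ to be the sources and sinks of $\Gamma$, the same chaining structure shows $\Gamma$ is the smallest flow in $\F_i(a_1,\ldots,a_q;b_1,\ldots,b_q)$: any competitor agrees on the largest-sink edges until some index $l$, and then its $s'_l$ is forced below $i-l+1=s_l$, which means $\Gamma'>\Gamma$. Finally $\sigma_{\Gamma,i}(T)$ is just $T$ with its first column replaced by the staircase $1,2,\ldots,n$, which is automatically standard and well-defined. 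Without this explicit choice (in particular, without taking $i$ maximal, not merely some $i$ with $c_i>0$), the claim that the repairing flow is the minimum is not justified, and the conclusion does not follow.
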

\begin{proof} Let $\lm_j:=\sum_{k=j}^{n-1}c_j$ for $j=1,\ldots,n$.
Then $\lm:=(\lm_1,\ldots,\lm_n)$ is a partition coherent with $\omega$.
Then by Proposition~\ref{basis of Weyl module}
%%это теорема о стандартном базисе
we can write $v=Fe^+_\omega$ with $F=\sum_{S\in X}\alpha_SF_S$,
where $X$ is a nonempty set consisting of standard $\lm$-tableaux
and $\alpha_S\in\K^*$. Let $T$ be a minimal element of $X$.
We denote by $i$ the maximal number of $1,\ldots,n-1$
such that $c_i>0$. Clearly, $i$ is the height of the first column
of $[\lm]$.

Let $i_0$ be the minimal number of $1,\ldots,i$ such that $i_0<T(i_0,1)$
or be $+\infty$ if there is no such number.
Obviously, $s=T(s,1)$ for any $s=1,\ldots,i_0-1$ and
$s<T(s,1)$ for any $s=i_0,\ldots,i$, since $T$ is standard.
%%>Важно это сказать.
Consider the graph $\Gamma$ with vertices $\Z$ and edges
$$
%E(\Gamma)=\{(s,T(s,1))\|s=i_0,\ldots,i\}.
\bigl(i,T(i,1)\bigr),\;\bigl(i{-}1,T(i{-}1,1)\bigr),\;\ldots,\;\bigl(i_0,T(i_0,1)\bigr).
$$
%Note that in this sequence, the ends (second components) of edges
%strictly decrease from left to right, since $T$ is standard.
The beginnings of all edges of $\Gamma$ are mutually distinct
and the ends of all edges of $\Gamma$ are mutually distinct too.
Therefore, $\Gamma$ is a flow (see Definition~\ref{definition:lcwman:1}).

Since the beginnings of the edges of $\Gamma$ are less than or equal to $i$,
the sources and transit points of $\Gamma$ are also less than or equal to $i$.
If $(s,T(s,1))$ is an edge of $\Gamma$ and $T(s,1){\le}i$,
then $i_0<T(s,1)\le i$ and $\Bigl(T(s,1),T\bigl(T(s,1),1\bigr)\Bigr)$
is an edge of $\Gamma$ beginning at $T(s,1)$.
Hence $T(s,1)$ is a transit point of $\Gamma$.
Therefore, all the sinks of $\Gamma$ are greater than $i$.

We denote by $a_1,\ldots,a_q$ the sources of $\Gamma$ and by $b_1,\ldots,b_q$
the sinks of $\Gamma$ ordered so that~(\ref{equation:lcwman:2}) holds.
Note that $\Gamma$ can be the empty flow, in which case $i_0=+\infty$
and $q=0$.

We are going to show that $\Gamma$ is the smallest flow of
$\F_i(a_1,{\ldots},a_q;b_1,{\ldots},b_q)$.
Take any flow $\Gamma'$ of $\F_i(a_1,{\ldots},a_q;b_1,{\ldots},b_q)$
distinct from $\Gamma$. In that case, we have $q>0$ and $1\le i_0\le i$.
Let $(s'_1,t'_1),\ldots,(s'_{r'},t'_{r'})$
be all the edges of $\Gamma'$ ordered so that $t'_1>\cdots>t'_{r'}$.
We set $r\,{:=}\,i\,{-}\,i_0\,{+}\,1$ and $s_k\,{:=}\,i\,{-}\,k\,{+}\,1$,\linebreak
$t_k:=T(i\,{-}\,k\,{+}\,1,1)$ for any $k:=1,\ldots,r$.
Then the edges of $\Gamma$ are \linebreak$(s_1,t_1),\ldots,(s_r,t_r)$ and
$t_1>\cdots>t_r$.

Following the comparison algorithm described in
Section~\ref{Comparison of flows}, we take
$l=1,\ldots,\min\{r,r'\}$ such that
$(s_k,t_k)=(s'_k,t'_k)$ for $k<l$ and $(s_l,t_l)\ne(s'_l,t'_l)$.
We have $t'_l=t_l$ and $s'_l\ne s_l$. Moreover, $s'_l\le i$ and
$s'_l\notin\{s'_1,\ldots,s'_{l-1},s_l\}=\{s_1,\ldots,s_l\}=\{i,\ldots,i-l+1\}$,
since $\Gamma'$ is a flow of $\F_i(a_1,\ldots,a_q;b_1,\ldots,b_q)$.
Hence $s'_l<i-l+1=s_l$ and $\Gamma'>\Gamma$.

By~(\ref{equation:lcwman:5}), we have
%%>ссылка
$$
z^\omega_i(a_1,\ldots,a_q;b_1,\ldots,b_q)(v)=
\xi_i(a_1,\ldots,a_q;b_1,\ldots,b_q)(F)e^+_{\omega-\omega_i}.
$$
It is easy to see that $\sigma_{\Gamma,i}(T)$ is well-defined and
is the tableau obtained from $T$ by replacing its first column by
the column of height $n$ having $k$ in each row $k=1,\ldots,n$.
Clearly, $\sigma_{\Gamma,i}(T)$ is standard.

By Corollary~\ref{corollary:lcwman:5}, we have
$$
\xi_i(a_1,\ldots,a_q;b_1,\ldots,b_q)(F)=
\sum\nolimits_{P\in Y}\beta_PF_P,
$$
where $Y$ is a
%finite
set of regular row standard
$(\lm_1,\ldots,\lm_i,\lm_{i+1}{+}1,{\ldots},\lm_n{+}1)$-tableaux,
$\beta_P\in\K^*$ and $\sigma_{\Gamma,i}(T)$ is a minimal element of $Y$.
Suppose that some $P\in Y$ is not standard.
Note that $(\lm_1,\ldots,\lm_i,\lm_{i+1}{+}1,{\ldots},\lm_n{+}1)$
is coherent with $\omega-\omega_i$.
Then by Proposition~\ref{straightening rule},
%%>straightening rule
$F_Pe^+_{\omega-\omega_i}$ is a linear combination of
vectors $F_Qe^+_{\omega-\omega_i}$, where $Q$ is a standard
$(\lm_1,\ldots,\lm_i,\lm_{i+1}{+}1,{\ldots},\lm_n{+}1)$-tableau
such that $Q>P$. In particular, $Q\ne\sigma_{\Gamma,i}(T)$
by virtue of the minimality of $\sigma_{\Gamma,i}(T)$ in $Y$.
Hence the vector $z^\omega_i(a_1,\ldots,a_q;b_1,\ldots,b_q)(v)$
written in the standard basis of $\Delta(\omega-\omega_i)$
has a nonzero coefficient at $F_{\sigma_{\Gamma,i}(T)}e^+_{\omega-\omega_i}$
and thus is itself nonzero (Proposition~\ref{basis of Weyl module}).
\end{proof}

\begin{remark}
In this proof, it is important that all tableaux of $Y$ including
$\sigma_{\Gamma,i}(T)$ have shape coherent with $\omega-\omega_i$.
(Otherwise we would not be able to apply
Propositions~\ref{basis of Weyl module} and~\ref{straightening rule}).
We included step~\ref{transformation:2} in the definition of
$\sigma_{\Gamma,i}(T)$ exactly to ensure this property.
\end{remark}

\begin{theorem}\label{theorem:lcwman2:2}
If $G=A_{n-1}(\K)$, then Conjecture B holds for all $(F,\omega)$.
\end{theorem}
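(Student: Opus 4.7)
The strategy is to apply the reformulation given by Lemma~\ref{lemma:reformulation:B}: it suffices to prove that, for every nonzero dominant weight $\omega\in X^+(T)$, the intersection~(\ref{equation:lcwman:-1}) contains no nonzero simply primitive vector. So I fix $\omega=c_1\omega_1+\cdots+c_{n-1}\omega_{n-1}\ne0$ and a nonzero simply primitive vector $v\in\Delta(\omega)$, and aim to exhibit an index $i\in\{1,\ldots,n-1\}$ with $\omega-\omega_i\in X^+(T)$ such that $d^{\,\omega}_{\omega_i}(v)\ne0$.

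The main input is Lemma~\ref{lemma:lcwman2:8}, which supplies integers $a_1<\cdots<a_q\le i<b_q<\cdots<b_1$ with $c_i>0$ (so $\omega-\omega_i\in X^+(T)$) and such that $z^\omega_i(a_1,\ldots,a_q;b_1,\ldots,b_q)(v)\ne0$. The plan is then to argue that on a simply primitive vector the long commutator~(\ref{equation:lcwman:4.5}) defining $z^\omega_i$ collapses to a plain product applied to $d^{\,\omega}_{\omega_i}(v)$, so that its nonvanishing forces $d^{\,\omega}_{\omega_i}(v)\ne0$.

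Concretely, a short induction on the length of a right-normed commutator yields the identity $[x_1,x_2,\ldots,x_N]\,v=x_1x_2\cdots x_N\,v$ whenever $x_1v=x_2v=\cdots=x_{N-1}v=0$. In~(\ref{equation:lcwman:4.5}) all entries $x_1,\ldots,x_{N-1}$ are operators $X^V_{\alpha_j}$ attached to simple roots $\alpha_j$, each of which annihilates $v$ since $v$ is simply primitive. The final entry $x_N=D^{\,\omega}_{\omega_i}$ agrees with $d^{\,\omega}_{\omega_i}$ on $\Delta(\omega)$. Substituting gives
$$
z^\omega_i(a_1,\ldots,a_q;b_1,\ldots,b_q)(v)=X^V_{\alpha_{i^{(q)}_1}}\cdots X^V_{\alpha_i}\cdots X^V_{\alpha_{i^{(1)}_{k_1}}}X^V_{\alpha_i}\,d^{\,\omega}_{\omega_i}(v).
$$
Since the left-hand side is nonzero by the choice of $i$ and the $a_j,b_j$, we must have $d^{\,\omega}_{\omega_i}(v)\ne0$, so $v\notin\ker d^{\,\omega}_{\omega_i}$. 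As $\omega-\omega_i\in X^+(T)$, the index $i$ belongs to the indexing set of~(\ref{equation:lcwman:-1}), and $v$ therefore does not lie in that intersection, as required.

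The real substance of the argument is already packaged in Lemma~\ref{lemma:lcwman2:8}, whose proof rests on the basis theorem for Weyl modules, the straightening rule and the flow technology of Section~\ref{Flows}; by contrast, the telescoping of the commutator on a simply primitive vector is a one-line induction. Thus the main obstacle lies entirely in those preparatory results, and once they are in hand the theorem drops out by direct combination with Lemma~\ref{lemma:reformulation:B}.
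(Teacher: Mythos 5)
Your proposal is correct and rests on the same scaffolding as the paper's proof: reduce via Lemma~\ref{lemma:reformulation:B} to showing that~(\ref{equation:lcwman:-1}) contains no nonzero simply primitive vector, and then invoke Lemma~\ref{lemma:lcwman2:8} to produce a tuple $(a_1,\ldots,a_q;b_1,\ldots,b_q;i)$ for which $z^\omega_i$ does not vanish on a hypothetical nonzero $v$. Where you diverge is the treatment of the commutator~(\ref{equation:lcwman:4.5}). The paper expands it as a signed sum of all products of the operators $X^V_{\alpha_j}$ and $D^\omega_{\omega_i}$, and argues term by term that the \emph{first-applied} factor (either $D^\omega_{\omega_i}$ or some $X^V_{\alpha_j}$) already kills $v$ when $v$ is simply primitive and lies in $\ker d^{\,\omega}_{\omega_i}$; hence $z^\omega_i(v)=0$, contradicting Lemma~\ref{lemma:lcwman2:8}. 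You instead observe that because the commutator is right-normed with $D^\omega_{\omega_i}$ innermost, and all preceding entries $X^V_{\alpha_j}$ annihilate the simply primitive $v$, the telescoping identity $[x_1,\ldots,x_N]v=x_1\cdots x_N\,v$ (valid when $x_1v=\cdots=x_{N-1}v=0$) collapses $z^\omega_i(v)$ to a single monomial times $d^{\,\omega}_{\omega_i}(v)$, from which nonvanishing of $z^\omega_i(v)$ forces $d^{\,\omega}_{\omega_i}(v)\ne0$ directly. This is slightly sharper: it computes $z^\omega_i(v)$ for \emph{any} simply primitive $v$, rather than only showing it vanishes under the additional hypothesis $v\in\ker d^{\,\omega}_{\omega_i}$. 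Both routes are valid and of comparable length; the paper's is a little more robust (it does not depend on the precise placement of $D^\omega_{\omega_i}$ in the commutator), while yours exploits the right-normed form for a cleaner one-step collapse.
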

\begin{proof} We are going to apply Lemma~\ref{lemma:reformulation:B}.
Let $\omega$ be a nonzero dominant weight and
$v$ be a simply primitive vector
belonging to~(\ref{equation:lcwman:-1}).
We are going to apply the operator $z^\omega_i(a_1,\ldots,a_q;b_1,\ldots,b_q)$
to $v$, where $\omega-\omega_i$ is dominant and (\ref{equation:lcwman:2}) holds.
%Hence $v=0$ by Lemma~\ref{lemma:lcwman2:8}.
Recall that $z^\omega_i(a_1,\ldots,a_q;b_1,\ldots,b_q)$
equals~(\ref{equation:lcwman:4.5}), which in turn is a
linear combination with coeffitients $\pm1$ or $0$ of the restrictions $w|_{\Delta(\omega)}$,
where $w$ is a product of the operators
$$
\begin{array}{l}
X^V_{\alpha_{i^{(q)}_1}},\ldots,X^V_{\alpha_{i^{(q)}_{k_q}}},X^V_{\alpha_i},
    X^V_{\alpha_{i^{(q-1)}_1}},\ldots,X^V_{\alpha_{i^{(q-1)}_{k_{q-1}}}},X^V_{\alpha_i},\ldots,\\[12pt]
\;\;X^V_{\alpha_{i^{(1)}_1}},\ldots,X^V_{\alpha_{i^{(1)}_{k_1}}},X^V_{\alpha_i},D^\omega_{\omega_i}
\end{array}
$$
taken in an arbitrary order. If the last factor of $w$ is
$D^\omega_{\omega_i}$, then $w|_{\Delta(\omega)}(v)=0$, since $v$ belongs
to~(\ref{equation:lcwman:-1})
and the restriction of $D^\omega_{\omega_i}$ to $\Delta(\omega)$ is $d^\omega_{\omega_i}$.
Otherwise $w|_{\Delta(\omega)}(v)=0$,
since $v$ is simply primitive. Thus we have proved that
$z^\omega_i(a_1,\ldots,a_q;b_1,\ldots,b_q)(v)=0$.
Hence $v=0$ by Lemma~\ref{lemma:lcwman2:8}.
\end{proof}

\section{Appendix: List of Notations}\label{Appendix: List of Notations}

\tabcolsep=0pt
\begin{tabular}{p{2.6cm}p{9.5cm}}

$\Z^+$                   & set of nonnegative integers;\\[3pt]

$\U$                     & hyperalgebra of algebraic group $G$, p.~\pageref{hyperalgebra};\\[3pt]

$X_{\alpha,m}$,\, $H_{\alpha,m}$       & images of $X_\alpha^m/m!$ and $\binom{H_\alpha}m$ in $\U$ respectively;\\[3pt]

$\U^-$                   & subalgebra of $\U$ generated by all $X_{\alpha,m}$ with $\alpha<0$;\\[3pt]

$\U^0$                   & subalgebra of $\U$ generated by all $H_{\alpha,m}$;\\[3pt]

$\U^{-,0}$               &$\U^-\U^0$;\\[3pt]

$X_{\alpha,m}^V$         &operator on a rational $G$-module $V$ acting as the left\newline multiplication by $X_{\alpha,m}$;\\[3pt]

$X(T)$,\;$X^+(T)$        &sets of weights and dominant weights of torus $T$, p.~\pageref{XT};\\[3pt]

$L(\omega)$              & irreducible rational module
                             with highest weight $\omega$;\\[3pt]
$v^+_\omega$             & fixed nonzero vector of $L(\omega)$ having weight $\omega$;\\[3pt]

$\Delta(\omega)$         & Weyl module with highest weight $\omega$;\\[3pt]

$e^+_\omega$             & fixed nonzero vector of $\Delta(\omega)$ having weight $\omega$;\\[3pt]

$\nabla(\omega)$         & module contravariantly dual to $\Delta(\omega)$;\\[3pt]

$\h(\omega)$             &$a_1+\cdots+a_\ell$ for $\omega=a_1\omega_1+\cdots+a_\ell\omega_\ell$, p.~\pageref{h};\\[3pt]

$\ev^\omega$,\, $\r_{\alpha,m}^{\,\omega}$ &  p.~\pageref{r};\\[3pt]

$d^{\,\omega}_\delta$   & $\U^-$-homomorphism $\Delta(\omega)\to\Delta(\omega-\delta)$ that takes $e^+_\omega$ to $e^+_{\omega-\delta}$, Lemma~\ref{lemma:U^-}, p.~\pageref{lemma:U^-};\\[3pt]

$\theta_\delta$         &p.~\pageref{theta};\\[3pt]

$\eta_{\alpha,m}$       &p.~\pageref{eta};\\[3pt]

$S_q$                  &group of bijections of $\{1,\ldots,q\}$ (symmetric group);\\[3pt]

$\F_i(a_1,\ldots,b_q)$ &p. \pageref{F};\\[3pt]

$\llbracket i_1,\ldots,i_k\rrbracket_i$  &$[\eta_{\alpha_{i_1}},\ldots,\eta_{\alpha_{i_k}},\theta_{\omega_i}]$, p.~\pageref{ll};\\[3pt]

$\eta_\alpha$          &$\eta_{\alpha,1}$;\\[3pt]

$\delta_{\mathcal P}$  &$1$ if $\mathcal P$ is true and $0$ otherwise;\\[3pt]

$H_l$                  &$H_{\alpha_l,1}$;\\[3pt]

$\UT(n)$               & set of $n\times n$ %upper triangular
                         matrices with entries in $\Z$
                         having $0$ on and under the main diagonal;\\[3pt]

$N^s$                  &$\sum_{b=1}^nN_{s,b}$, the sum of elements in row $s$ of $N\in\UT(n)$;\\[3pt]

$F^{(N)}$              &$\prod\nolimits_{1\le a<b\le n}E_{b,a}^{(N_{a,b})}$, p.~\pageref{FN};\\[3pt]

$\O(a,i,b)$            &Definition~\ref{definition:lcwman2:2}, p.~\pageref{definition:lcwman2:2};\\[3pt]

$\xi_i(a_1,\ldots,b_q)$&Definition~\ref{definition:xi}, p.~\pageref{definition:xi};\\[3pt]

$[\lm]$                &diagram of a composition $\lm$, p.~\pageref{diagram};\\[3pt]

$\N_T$                  &p.~\pageref{NT};\\[3pt]

$F_T$                  &$F^{(\N_T)}$;\\[3pt]

$\epsilon_i$           &$(0,\ldots,0,1,0,\ldots,0)$ of length $n$ with $1$ at position $i$;\\[3pt]

$\alpha(i,j)$          &$\epsilon_i-\epsilon_j$;\\[3pt]

\end{tabular}

\begin{tabular}{p{2.6cm}p{9.5cm}}

$\sigma_{\Gamma,i}(T)$ &p.~\pageref{sigma};\\[3pt]

$\shape(Q)$            &$\mu$ if $Q$ is a $\mu$-tableau, p.~\pageref{shape};\\[3pt]

$T[m]$                 &tableau obtained from $T$ by removing
                        all nodes with entry greater than $m$,
                        p.~\pageref{Tm};\\[3pt]

$\chain(T)$            &$\bigl(\shape(T[1]),\shape(T[2]),\ldots,\shape(T[n])\bigr)$, p.~\pageref{chain};\\[3pt]

$\nu_m(\Gamma)$        &$\sum\{\epsilon_s\|(s,t)\mbox{ is an edge of }\Gamma\mbox{ and }s\le m<t\}$, p.~\pageref{nu};\\[3pt]

$X^V_\alpha$           &$X^V_{\alpha,1}$;\\[3pt]

$z^\omega_i(a_1,\ldots,b_q)$ &p.~\pageref{equation:lcwman:5}, eq.~(\ref{equation:lcwman:5}).

\end{tabular}

%\bibliography{ref}
%\bibliographystyle{m}

\end{document}